\numberwithin{equation}{section}
\theoremstyle{plain}
\newtheorem{thm}{Theorem}[section]
\newtheorem{lem}[thm]{Lemma}
\newtheorem{prop}[thm]{Proposition}
\newtheorem{cor}[thm]{Corollary}
\newtheorem{ques}[thm]{Question}
\newtheorem*{thm*}{Theorem}
\newtheorem*{lem*}{Lemma}
\newtheorem*{prop*}{Proposition}
\newtheorem*{cor*}{Corollary}
\theoremstyle{definition}
\newtheorem{defn}[thm]{Definition}
\newtheorem*{defn*}{Definition}
\newtheorem{ex}[thm]{Example}
{}
\newtheorem{rem}[thm]{Remark}
\newtheorem*{rem*}{Remark}
\newtheorem*{ack}{Acknowledgements}{}
\theoremstyle{remark}
{}
{}
{}
\def\to{\longrightarrow}
\def\NN{\mathbb{N}}
\DeclareMathOperator{\Hom}{Hom}
\DeclareMathOperator{\End}{End}
\DeclareMathOperator{\id}{id}
\DeclareMathOperator{\im}{im}
\DeclareMathOperator{\cha}{char}
\DeclareMathOperator{\Der}{Der}
\DeclareMathOperator{\Inn}{Inn}
\DeclareMathOperator{\gldim}{gl.dim}
\DeclareMathOperator{\smodu}{\underline{\mathsf{mod}}}
\DeclareMathOperator{\Perf}{\mathsf{Perf}}
\DeclareMathOperator{\res}{res}
\DeclareMathOperator{\rad}{rad}
\DeclareMathOperator{\HHH}{HH}
\definecolor{internationalkleinblue}{rgb}{0.0, 0.18, 0.65}
\title[On solvability of the first Hochschild cohomology \ldots]{On solvability of the first Hochschild cohomology of a finite-dimensional algebra}
\author{Florian Eisele}
\address{Florian Eisele, School of Mathematics and Statistics,
University of Glasgow,
University Place,
Glasgow G12 8QQ
}
\email{florian.eisele@glasgow.ac.uk}
\urladdr{https://feisele.github.io}
\author{Theo Raedschelders}
\thanks{The second author is supported by an EPSRC postdoctoral fellowship EP/R005214/1.}
\address{Theo Raedschelders, School of Mathematics and Statistics,
University of Glasgow,
University Place,
Glasgow G12 8QQ
}
\email{theo.raedschelders@glasgow.ac.uk}
\urladdr{http://www.maths.gla.ac.uk/~traedschelders/}
\keywords{Hochschild cohomology, finite-dimensional algebras, Lie algebras, representation type}
\subjclass{16E40, 16G10, 16G60}
\begin{document}

\begin{abstract}

For an arbitrary finite-dimensional algebra $A$, we introduce a general approach to determining when its  first Hochschild cohomology $\HHH^1(A)$, considered as a Lie algebra, is solvable. 
If $A$ is moreover of tame or finite representation type, we are able to describe $\HHH^1(A)$ as the direct sum of a solvable Lie algebra and a sum of copies of $\mathfrak{sl}_2$. We proceed to determine the exact number of such copies, and give an explicit formula for this number in terms of certain chains of Kronecker subquivers of the quiver of $A$. As a corollary, we obtain a precise answer to a question posed by Chaparro, Schroll and Solotar. 
\end{abstract}

\maketitle
\tableofcontents

\section{Introduction}

Hochschild cohomology is one of the most intriguing invariants of an associative algebra, as can already be gleaned from the classical references~\cite{eilenberg1956homological,gerstenhaber1964deformation,loday2013cyclic}. The low-dimensional Hochschild cohomology groups have well known interpretations, and control the infinitesimal deformation theory of the algebra: a lengthy survey is given in~\cite{gerstenhaber1988algebraic}, which despite being over 30 years old, is still an excellent read.

The actual computation of Hochschild cohomology is greatly impeded by the fact that it enjoys only limited functoriality properties~\cite{keller2003derived}, and does not give rise to an ``additive'' invariant~\cite{tabuada2015noncommutative}. One case of particular interest are finite-dimensional algebras over a field, as the very well developed representation theory of such algebras may yield information on Hochschild cohomology groups, see for example~\cite{bardzell1997alternating,erdmann1999twisted,happel1989hochschild,
happel1990hochschild,holm2004hochschild}.

Hochschild cohomology enjoys a rich and complex structure: the Hochschild cochain complex carries a $B_{\infty}$-algebra structure~\cite{getzler1994operads}, which in particular induces the more well-known Gerstenhaber algebra structure on Hochschild cohomology~\cite{gerstenhaber1963cohomology}. That being said, in this article we will  focus on only a small part of this structure. Remember that the first Hochschild cohomology group $\HHH^1(A)$ of an algebra $A$ can be identified with the group of outer derivations of $A$, and the commutator of derivations endows this group with a Lie algebra structure, which is a shadow of the Gerstenhaber algebra structure mentioned above. 

Based on a mounting body of evidence,
Chaparro, Schroll and Solotar~\cite{chaparro2018lie} suggested that for a finite-dimensional algebra $A$, there should be a close connection between the Lie algebra structure on $\HHH^1(A)$ and the representation type of $A$. More precisely (see also \S\ref{css}), they showed that for a gentle algebra $A$ defined over an algebraically closed field of characteristic not equal to two, the first Hochschild cohomology of $A$ is solvable as a Lie algebra if and only $A$ is not Morita equivalent to a Kronecker quiver. Gentle algebras are of tame representation type, which led them to ask the following question:
\begin{ques}\cite[\S 1]{chaparro2018lie}
\label{qintro}
Is it true that, except in some low dimensional cases, the first Hochschild cohomology space of any finite-dimensional algebra of tame representation type is a solvable Lie algebra?
\end{ques}

\subsection{A general reduction} 
\label{reduction}
Assume now that $A$ is a finite-dimensional algebra (of arbitrary representation type) defined over an algebraically closed field $k$ of characteristic $p \geq 0$. In this paper we give a general approach to the problem of determining when the first Hochschild cohomology $\HHH^1(A)$ is solvable as a Lie algebra, and in the case of tame algebras we obtain a complete answer to Question \ref{qintro}. 

If $k$ is of characteristic zero, then every derivation of $A$ preserves the Jacobson radical $\rad(A)$, and hence every derivation of $A$ induces a derivation of $A/\rad^2(A)$. This  defines a map 
\begin{equation}
\psi_2: \HHH^1(A) \to \HHH^1(A/\rad^2(A))
\end{equation}
which is compatible with the Lie bracket. Hence $\HHH^1(A)$ is solvable if and only if $\ker(\psi_2)$ and $\im(\psi_2)$ are solvable. One can show that $\ker(\psi_2)$ is always solvable, which moves the focus to $\im(\psi_2)$.  This reduction step is a crucial first ingredient to the method we use to tackle Question~\ref{qintro}. 

Away from the characteristic zero case, the situation is a little more intricate, but a reduction is still possible.
If $k$ is of arbitrary characteristic, it is no longer true in general that every derivation preserves the radical, leading us to define $\HHH^1_{\rad}(A)$, which only takes the radical preserving derivations into account. 
We then get the following diagram of Lie algebras, which will be established over the course of \S\ref{sec:lie}--\S\ref{sec:hoco non-wild}.
\begin{equation}
\label{diagram big picture}
	\xymatrix{
		& & \HHH^1(A) \ar[rd]^-{\psi_1} \\ 
		& \ker(\psi_1)\ar@{^(->}[r] \ar@{^(->}[ru] & \HHH^1_{\rad}(A) \ar^-{\psi_2}[rd] \ar@{^(->}[u]  & \HHH^1(A/ \sum_{i\neq j} Ae_iAe_jA) \ar@{=}[r]^-\sim & \HHH^1(\bigoplus \textrm{local algebras}) \\		
		&\ker(\psi_2) \ar@{^(->}[ru] & & \HHH^1_{\rad}(A/\rad^2(A)) \ar@{->>}[rd]^{\psi_3} \\ && \ker(\psi_3) \ar@{^(->}[ru] && \HHH^1(kQ^s)
	}
\end{equation}
where the $e_i$ denote a full set of non-conjugate orthogonal primitive idempotents in $A$, $Q$ is the quiver of $A$ and $Q^s$ is the separated quiver of $A$ (see \S\ref{sec:hoco non-wild}). We show that $\ker(\psi_2)$ and $\ker(\psi_3)$ are solvable in general, thereby reducing showing solvability of $\HHH^1(A)$ to showing solvability of the images of $\psi_1$ and $\psi_3\circ \psi_2$. We emphasize that this picture is independent of the characteristic of $k$ and the representation type of $A$. 
Moreover, in Proposition \ref{prop:der-rad}, we show that as long as $p$ ``avoids the relations'' of $A$ (e.g. if the quiver of $A$ contains no loops), we have $\HHH^1(A)=\HHH^1_{\rad}(A)$,  simplifying diagram \eqref{diagram big picture} similar to the characteristic zero case. This is the case for most examples of interest, but it should nonetheless be noted that Proposition~\ref{prop:der-rad} is by no means exhaustive. 

When $A$ is of tame or finite representation type our results give an if-and-only-if criterion for the solvability of the image of $\psi_3\circ \psi_2$, independent of the characteristic of $k$ (see \S\ref{tamesec}). But even if $A$ is of wild representation type, one can still use knowledge of $\HHH^1$ of the hereditary algebra $kQ^s$ to infer solvability of $\HHH^1(A)$, for example for certain shapes of $Q$. And while we do not study the image of $\psi_1$ in detail, one can still attempt to show that the image of $\psi_1$ is solvable (or even zero) in cases one is interested in. That is, looking at $\HHH_{\rad}^1(A)$ instead of $\HHH^1(A)$ is less of a limitation  than one might think at first sight.

\subsection{Algebras of non-wild representation type} 
\label{tamesec}
Assume for the rest of the introduction that $\cha (k)\neq 2$ (see Theorem~\ref{thm:main} and Corollary~\ref{cor:char2} for results in characteristic two).
Our main result gives a complete answer to Question \ref{qintro} by characterising algebras of finite or tame (i.e. ``non-wild'') representation type with non-solvable first Hochschild cohomology in terms of their presentations. 

\begin{thm}[see Theorem \ref{thm:mainmain}]
\label{thm:main-intro}
Let $A=kQ/I$ be a finite-dimensional algebra of non-wild representation type. Then there is an isomorphism of Lie algebras 
\begin{equation}
\label{eq:levi}
\HHH^1_{\rad}(A) \cong \mathfrak{sl}_2^{\oplus m} \oplus \mathfrak{r},
\end{equation}
where $\mathfrak{r}$ is solvable and $m$ is the number of equivalence classes of maximal Kronecker chains with standard relations in $A$.
\end{thm}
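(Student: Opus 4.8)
The plan is to prove the decomposition by working through the diagram \eqref{diagram big picture} and reducing to the hereditary case $kQ^s$, then identifying the $\mathfrak{sl}_2$-summands explicitly with Kronecker chains. First I would establish, as promised in the excerpt, that $\ker(\psi_2)$ and $\ker(\psi_3)$ are solvable; together with the fact that an extension of solvable Lie algebras by a solvable ideal is solvable, this reduces the statement to computing the image of $\psi_3 \circ \psi_2$ inside $\HHH^1(kQ^s)$, and showing that this image is isomorphic to $\mathfrak{sl}_2^{\oplus m} \oplus \mathfrak{r}'$ with $m$ as claimed. The key structural input is that $kQ^s$ is hereditary, so its first Hochschild cohomology is well understood: for a hereditary algebra $k\Gamma$, the Lie algebra $\HHH^1(k\Gamma)$ decomposes according to the connected components of $\Gamma$, with each component contributing $\mathfrak{gl}$-type or $\mathfrak{sl}_2$-type pieces depending on whether the component is a single vertex with loops, a Kronecker quiver, or something larger. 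Since $A$ is of non-wild representation type, a theorem of the type going back to the classification of tame algebras (via the separated quiver criterion of Drozd, or Han's criterion) forces the connected components of $Q^s$ to be of very restricted shape — in particular only Dynkin or affine type components can occur in the relevant part — and among these only the Kronecker quiver $\tilde{A}_1$ yields a non-solvable contribution, namely $\HHH^1(k\tilde{A}_1) \cong \mathfrak{pgl}_2 \cong \mathfrak{sl}_2$ in characteristic $\neq 2$.

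The second, more delicate half is the combinatorial identification: I would show that the Kronecker components of $Q^s$ that survive in the image of $\psi_3\circ\psi_2$ are in bijection with the equivalence classes of maximal Kronecker chains with standard relations in $A$. This means tracking precisely which derivations of $A/\rad^2(A)$ that preserve the radical descend to derivations of $kQ^s$ supported on a given Kronecker component, and conversely lifting such a derivation back to $A$ through $\psi_2$. The notion of a ``Kronecker chain with standard relations'' presumably records a sequence of arrows in $Q$ forming nested Kronecker-type configurations whose relations are the obvious commutativity/zero relations, and the role of the equivalence relation is to quotient out by the ambiguity in how such a chain sits inside $Q$; the maximality condition ensures we count each $\mathfrak{sl}_2$ exactly once rather than overcounting sub-chains. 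Concretely I expect one builds, for each maximal Kronecker chain, an explicit $\mathfrak{sl}_2$-triple of derivations of $A$ (scaling the two parallel arrows oppositely gives the semisimple element, and the two nilpotent elements come from the two ways of mapping one arrow to the other), checks these are outer and radical-preserving, and checks that derivations from distinct equivalence classes commute and span complementary subspaces modulo the solvable radical.

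The main obstacle will be the second half — precisely matching the representation-theoretic count (connected Kronecker components of $Q^s$ contributing to $\im(\psi_3\circ\psi_2)$) with the intrinsic combinatorial count (equivalence classes of maximal Kronecker chains with standard relations in $A$), and in particular controlling the interaction with the relations $I$: a Kronecker subquiver of $Q$ only produces a genuine $\mathfrak{sl}_2$ if the relations restricted to it are ``standard,'' and one must verify that non-standard relations either kill the $\mathfrak{sl}_2$ (pushing it into $\mathfrak{r}$) or are excluded by the non-wild hypothesis. A secondary technical point is handling characteristic $p$: one must check that $\ker(\psi_2), \ker(\psi_3)$ remain solvable and that the $\mathfrak{sl}_2$-triples remain linearly independent and non-degenerate when $p \neq 2$, for which the hypothesis $\cha(k) \neq 2$ is exactly what is needed (in characteristic $2$, $\mathfrak{sl}_2$ is itself solvable, which is why the statement changes, cf. Corollary \ref{cor:char2}). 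I would also need the earlier Proposition \ref{prop:der-rad} to know when $\HHH^1_{\rad}(A)$ can be upgraded to $\HHH^1(A)$, though the theorem as stated is already phrased in terms of $\HHH^1_{\rad}$ so this is not strictly required for the proof itself.
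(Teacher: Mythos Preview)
Your overall strategy is correct and matches the paper's: reduce via the maps $\psi_2,\psi_3$ to the hereditary algebra $kQ^s$, use non-wildness to force the components of $Q^s$ to be Dynkin or Euclidean, identify the non-solvable contributions with the $K_2$ components, and build explicit $\mathfrak{sl}_2$-triples of derivations to split the sequence. The construction of the $\mathfrak{sl}_2$-triples you sketch is exactly Proposition~\ref{prop:trivial if}.

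However, there is a conceptual confusion and a genuine underestimation of the hard step. First, you repeatedly identify ``Kronecker components of $Q^s$'' with ``maximal Kronecker chains in $A$'' as though these were in bijection. They are not: a maximal Kronecker chain of length $n$ in $A$ corresponds to $n$ separate $K_2$ components of $Q^s$, one for each pair $(a_i,b_i)$, hence $n$ a priori distinct maps $\Delta_A(a_i,b_i):\HHH^1_{\rad}(A)\to\mathfrak{sl}_2$. The reason these collapse to a \emph{single} $\mathfrak{sl}_2$ rather than $n$ copies is the conjugacy statement Theorem~\ref{thm:kronecker main}\,(2): for a surjective chain all the $\Delta_A(a_i,b_i)$ are conjugate. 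Without this, the counting argument does not go through, and establishing it requires nontrivial work (Lemma~\ref{lemma:rels 2  chain} and Corollary~\ref{cor: all delta surjective}).

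Second, the step you flag as ``the main obstacle'' --- showing that if a $\Delta_A(a_i,b_i)$ is surjective then the chain has standard relations --- is indeed the technical core, but your proposal contains no mechanism for it. The paper does not obtain this from the shape of $Q^s$ alone; rather it first reduces to $\rad^3(A)=0$ (Lemma~\ref{lemma:red mod rad3}), then invokes Ringel's classification of tame local algebras for the local case (Proposition~\ref{prop:local}), and for the non-local case proves a delicate case-by-case result for the double Kronecker quiver (Lemma~\ref{lemma double kronecker}, which is also precisely where $\cha(k)\neq 2$ is used) and then patches these local results into global ones via the conjugacy of the $\Delta$'s (Proposition~\ref{prop: delta normalises relations}, Corollary~\ref{cor:rels in chain}). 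The non-wild hypothesis enters through these external classification results, not merely through constraints on $Q^s$; without them you have no way to conclude that non-standard relations force the image of $\Delta$ to be a proper (hence solvable) subalgebra of $\mathfrak{sl}_2$.
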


A Kronecker chain with standard relations is simply a chain of Kronecker quivers embedded in $Q$, satisfying certain relations, the precise description of which we defer to the main body of the text (see Definitions \ref{def:maxkron} and \ref{def:standardrels}). What is important is that  for a fixed $A$, the number $m$ can readily be determined from a presentation, and for a non-wild algebra, the first Hochschild cohomology is solvable if and only if this number $m=0$. Given the explicit nature of Theorem \ref{thm:main-intro}, it is not difficult to construct examples of arbitrary dimension with $m \neq 0$, allowing us to answer Question \ref{qintro} in \S\ref{css}. 

Also in \S\ref{css}, we show that Theorem \ref{thm:main-intro} is strong enough to recover the results (pertaining to solvability of $\HHH^1(A)$) of \cite{chaparro2018lie,artenstein2018gerstenhaber,meinel2018gerstenhaber} and we also give some new consequences of Theorem \ref{thm:main-intro}. These are summarized in the following theorem.

\begin{thm}[see Corollary \ref{cor:fin-rep-type} and Theorem \ref{thm:tame-symmetric}]
Let $A=kQ/I$ be a finite-dimensional algebra.
\begin{enumerate}
\item If $A$ is of finite representation type, then $\HHH^1_{\rad}(A)$ is solvable.
\item If $A$ is symmetric of tame representation type, then $\HHH^1_{\rad}(A)$ is solvable if and only if $A \not\cong T(K_2)$, the trivial extension of the Kronecker algebra.
\end{enumerate}
\end{thm}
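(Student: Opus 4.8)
The plan is to deduce both statements from Theorem~\ref{thm:main-intro}, since each amounts to computing the number $m$ of equivalence classes of maximal Kronecker chains with standard relations in $A$ and showing it vanishes (resp.\ is forced to be $1$ in exactly one case).

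\textbf{Part (1): finite representation type.} Here I would argue that a Kronecker chain with standard relations cannot be embedded in the quiver of an algebra of finite representation type. The key point is that a Kronecker quiver $K_2$ (two parallel arrows $a,b\colon x\to y$) is the quiver of the Kronecker algebra, which is tame and in particular \emph{not} of finite representation type. A chain of Kronecker quivers in $Q$, together with the requirement that the prescribed (standard) relations hold, should force a convex subquiver whose associated quotient algebra is a quotient of $A$ that contains the Kronecker algebra as a quotient, or at least fails the no-subquiver-$\widetilde{A}_n$/Kronecker criterion for finite type. Concretely: if $e$ is the idempotent supported on the vertices of the Kronecker chain, then $eAe$ modulo the relations forced on it has infinite representation type, and a quotient/subalgebra argument (using that finite representation type descends to quotients $A/J$ and to $eAe$ by standard results, e.g.\ Auslander or the covering arguments in the references) contradicts $A$ being of finite type. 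Hence $m=0$, and Theorem~\ref{thm:main-intro} gives $\HHH^1_{\rad}(A)\cong\mathfrak r$ solvable. I expect this to be the cleaner of the two parts once the precise Definitions~\ref{def:maxkron} and~\ref{def:standardrels} are unwound.

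\textbf{Part (2): symmetric tame algebras.} Now $\HHH^1_{\rad}(A)$ is solvable iff $m=0$, so I must show that for $A$ symmetric tame, $m\geq 1$ forces $A\cong T(K_2)$, and conversely $T(K_2)$ has $m=1$. The converse is a direct (if slightly tedious) computation: present $T(K_2)=K_2\ltimes D(K_2)$ by its quiver and relations, exhibit the unique maximal Kronecker chain (the two Kronecker arrows themselves), check the standard relations hold, and verify there is exactly one equivalence class; this is consistent with the known fact that $\HHH^1(T(K_2))$ contains an $\mathfrak{sl}_2$. For the forward direction, suppose $A$ is symmetric, tame, and contains a maximal Kronecker chain with standard relations. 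The symmetry hypothesis is the crucial constraint: a symmetric algebra has a nondegenerate associative bilinear form, which severely restricts the shape of its quiver near a Kronecker subquiver (e.g.\ it forces the Kronecker arrows $x\rightrightarrows y$ to be ``balanced'' by arrows going back, and the standard-relations condition together with tameness then bounds the whole quiver). I would combine the classification/structural results on tame symmetric algebras with the local analysis around the Kronecker chain to conclude that the only possibility is $Q=K_2$ with $I$ the radical-square-zero-type relations defining $T(K_2)$, so $A\cong T(K_2)$.

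\textbf{Main obstacle.} The hard part will be the forward direction of Part (2): ruling out \emph{every} other symmetric tame algebra that a priori might carry a maximal Kronecker chain with standard relations. This requires either a bespoke argument exploiting the symmetrizing form to propagate relations and arrows along the Kronecker chain until the quiver collapses to $K_2$, or an appeal to the (substantial) classification of tame symmetric algebras to check the invariant $m$ case by case. I would first attempt the self-contained route via the symmetrizing form, since it keeps the proof internal to the methods developed in \S\ref{sec:lie}--\S\ref{sec:hoco non-wild}, and fall back on the classification only for the residual cases. Part (1) and the $T(K_2)$ computation should both reduce to straightforward bookkeeping once $m$ is in hand.
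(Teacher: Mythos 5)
Your Part (1) reasoning is essentially correct, and though you route it through Theorem~\ref{thm:main-intro}, the paper takes the slightly shorter path via Theorem~\ref{thm:main}: finite representation type forces $\overline{Q^s}$ to be a union of Dynkin quivers, $\widetilde{A}_1$ is not Dynkin, and Theorem~\ref{thm:main}\eqref{eq:main-2} applies directly. Note that your phrasing ``contains the Kronecker algebra as a quotient'' is not quite how the obstruction works; the clean step is that any Kronecker chain requires parallel arrows in $Q$, which immediately puts a $K_2=\widetilde{A}_1$ into $\overline{Q^s}$, contradicting the Dynkin-only condition for $A/\rad^2(A)$ of finite representation type. Once said that way, $m=0$ is immediate.

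The forward direction of Part (2) is where your proposal has a genuine gap, which you partly acknowledge. You say the symmetrizing form should ``propagate relations and arrows along the Kronecker chain until the quiver collapses,'' but this remains a plan rather than an argument, and you entertain falling back on a classification of tame symmetric algebras, which is not needed. The paper's actual proof is a self-contained case analysis driven by Lemma~\ref{rem:shape}, which says a maximal Kronecker chain in a non-wild algebra has exactly one of three shapes: a local double loop ($n=1$), a cyclic $\widetilde{A}_n$ with doubled arrows, or a linear $A_n$ with doubled arrows. Symmetry is exploited not via a direct manipulation of the bilinear form but via the consequence $\mathrm{top}(P)\cong\mathrm{soc}(P)$ for every indecomposable projective $P$: in the linear case, and in the cyclic case with $n>2$, conditions \ref{cond:S1}--\ref{cond:S3} force $\mathrm{soc}(P)\not\cong\mathrm{top}(P)$ at the source of the chain, a contradiction. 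The local case reduces to showing that symmetric quotients of the exterior algebra $\Lambda(x,y)$ (in characteristic $\neq 2$) are only $k$ and $k[x]/(x^2)$, and the cyclic $n=2$ case identifies $A$ as a quotient of $T(K_2)$ with no proper symmetric quotients carrying the chain. Without Lemma~\ref{rem:shape} and the $\mathrm{top}\cong\mathrm{soc}$ criterion, there is no mechanism in your sketch to actually exclude, say, a long linear Kronecker chain, so the proof would not go through as written.
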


\subsection{Proof of Theorem \ref{thm:main-intro}}

While the details of the proof of Theorem~\ref{thm:main-intro} are quite intricate, there are two recurring main ideas we would like to highlight. The first idea is that an algebra $A$ being of tame or finite representation type guarantees that there are no more than two parallel arrows in the quiver of $A$. This goes not just for $A$, but also for idempotent subalgebras of the form $eAe$, and potentially quotients thereof. So one can often reduce to the case of some quite small quivers. Lemma~\ref{lemma double kronecker} is an example of a result for a fixed small quiver which we then reduce other cases to.

 The second idea comes from the fact that $\HHH^1(A)$ is related to the Lie algebra of the (outer) automorphism group of $A$. The idea is best explained using automorphism groups, even though, for technical reasons, we do not use them in the actual proof and instead work with Lie algebras throughout.
But $\HHH^1(A)$ having an $\mathfrak{sl}_2$ as a quotient should (in a rough sense) correspond to the automorphism group of $A$ having an $\mathrm{SL}_2$ or a $\mathrm{GL}_2$ as a quotient. If the automorphism group of $A$ acts irreducibly on the vector space spanned by two parallel arrows through such a quotient, then a relation in $A$ involving one of these arrows yields an entire orbit of relations involving these two parallel arrows. The typical outcome in the small cases we are reduced to is then that either (almost) every path of length two is a relation, or none of them is, as all intermediate options are not stable under the action of the automorphism group. Tameness then usually precludes the case of no relations. This idea is first used in Proposition~\ref{prop:local}, but also in Lemma~\ref{lemma double kronecker} and some other places.

\subsection{Relation to other work}
After the first version of this preprint appeared on the arXiv, two further preprints, one by Linckelmann and Rubio y Degrassi \cite{linckelmanndegrassi} and one by Rubio y Degrassi, Schroll and Solotar \cite{schrolletal2019hochschild} appeared, both of which deal with very similar questions. We do not attempt a comparison with these results.

\subsection{Conventions} 
Assume $k$ is a field of arbitrary characteristic. From \S\ref{sec:hoco non-wild} onwards we will also assume $k=\bar k$.
Algebras are always assumed to be finite-dimensional, even if not explicitly stated.
For a finite-dimensional $k$-algebra $A$, we will denote by $\rad (A)$ its Jacobson radical. Throughout, we will assume that 
$A/\rad (A)$ is separable over $k$ (this is always the case if $k$ is perfect). When we write 
``$A=kQ/I$'' for a quiver $Q$ and an ideal $I$, we will tacitly assume that $I$ is an admissible ideal (i.e. contained in the ideal generated by paths of length two), unless explicitly stated otherwise. By $Q_1$ we denote the set of arrows of $Q$, and by $Q_0$ the set of vertices.

\section{The Lie algebra structure on the first Hochschild cohomology}
\label{sec:lie}
Remember that $\Der(A)$ forms a Lie algebra under the commutator bracket $[-,-]$, and denote by
\begin{equation}
\Der_{\rad}(A)=\{\delta \in \Der(A) \mid \delta(\rad (A)) \subset \rad (A)\}
\end{equation}
the derivations preserving the radical. Note that this is a sub Lie algebra of $\Der(A)$, and moreover, the inner derivations 
\begin{equation}
\Inn(A)=\{\delta \in \Der(A) \mid \delta=[a,-] \text{ for some } a \in A\} \subset \Der_{\rad}(A)
\end{equation}
form a Lie ideal. Hence we can define 
\begin{equation}
\HHH^1_{\rad}(A)=\Der_{\rad}(A)/\Inn(A) \subset \Der(A)/\Inn(A)=\HHH^1(A),
\end{equation}
and this is still a Lie subalgebra under the commutator bracket. Just like usual Hochschild cohomology, it is invariant under Morita equivalence.

	\begin{prop}
	\label{prop:morita}
	If the finite-dimensional algebras $A$ and $B$ are Morita equivalent, then there is an isomorphism of Lie algebras $\HHH^1_{\rad}(A) \cong \HHH^1_{\rad}(B)$.
	\end{prop}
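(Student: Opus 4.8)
The plan is to leverage the classical fact that Hochschild cohomology (as a graded algebra, and more pertinently here as a Lie algebra in degree one) is a derived invariant, combined with the observation that the radical-preserving condition can be reinterpreted in a Morita-invariant way. First I would reduce to the basic case: by standard Morita theory we may assume $B = eAe$ for an idempotent $e \in A$ with $AeA = A$. Under such a reduction there is a well-known isomorphism $\HHH^\bullet(A) \cong \HHH^\bullet(eAe)$, which in degree one restricts to a Lie algebra isomorphism $\HHH^1(A) \cong \HHH^1(eAe)$; the cleanest way to see this is to note that $A$ and $eAe$ are Morita equivalent, hence have equivalent derived categories, and $\HHH^1$ depends only on the dg category of perfect complexes (equivalently, $\RHom$ of the diagonal bimodule with itself), and the Lie bracket on $\HHH^1$ is intrinsic to that dg/$A_\infty$ structure (it is the Gerstenhaber bracket, a purely categorical datum). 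So the Lie algebra $\HHH^1(A)$ is unchanged.

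The real content is then to check that this isomorphism carries $\HHH^1_{\rad}(A)$ onto $\HHH^1_{\rad}(eAe)$. Here I would argue at the level of the explicit identification $\HHH^1(A) = \Der(A)/\Inn(A)$: given a derivation $\delta$ of $A$, one produces a derivation of $eAe$ (and vice versa, starting from a derivation $\delta'$ of $eAe$, one extends it along the Morita context bimodules ${}_{eAe}eA_A$ and ${}_AAe_{eAe}$ to a derivation of $A$, well-defined modulo inner derivations). The key point is that $\rad(eAe) = e\,\rad(A)\,e$ and, more generally, the Morita equivalence matches the radical filtration on $A$ with that on $eAe$; since derivations that become inner differ by $[a,-]$ which always preserves the radical, the condition ``$\delta(\rad A) \subseteq \rad A$'' is insensitive to the choice of representative and is transported correctly. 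This amounts to tracking the radical through the bimodule isomorphisms $A e \otimes_{eAe} eA \cong A$ and $eA \otimes_A Ae \cong eAe$ and observing these are filtered.

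The step I expect to be the main obstacle is verifying the compatibility with the Lie bracket together with the radical-preservation simultaneously, without circularity: one must be careful that the ``extend a derivation of $eAe$ to a derivation of $A$'' construction is genuinely a Lie algebra homomorphism on outer derivations (not merely a linear bijection), and that it is inverse to restriction. The slick route is to avoid explicit extension formulas and instead invoke that both $\HHH^1$ and the Gerstenhaber bracket are invariants of the dg-category $\Perf(A)$, so only the radical-preserving refinement requires a hands-on argument; that refinement is then a finite-dimensional, radical-filtration bookkeeping exercise. Alternatively, since the paper only ever needs this for $A=kQ/I$ basic versus a full matrix algebra over it — i.e. $B \cong \mathbbm{M}_n(A)$ or $B = eAe$ with $A$ basic — one can restrict to that case and make the bimodule identifications completely explicit, which sidesteps any subtlety about general Morita contexts.
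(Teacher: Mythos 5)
Your outline matches the paper's: reduce to $B = eAe$ for a full idempotent, appeal to the known explicit Morita isomorphism on $\HHH^1$, and then check that the radical-preserving subalgebra is carried across. But what you defer as a ``finite-dimensional radical-filtration bookkeeping exercise'' is in fact the entire content of the paper's proof, and it is not dodged by invoking dg/derived invariance: an abstract isomorphism of Lie algebras coming from $\Perf$-equivalence gives you no handle on where $\HHH^1_{\rad}$ lands, so you are forced back to an explicit formula. The paper uses Farkas's formula: for a dual basis $\{(p_i,f_i)\}$ of the progenerator $P = eA$, set $\delta^*(x) = \sum_i p_i\,\delta(f_i(x))$ and $\Phi(\delta) = [\delta^*, -] \in \Der(\End_A(P))$; the radical check is then a two-line computation that $[\delta^*, f](e) \in \rad(P)$ for $f \in \rad(\End_A(P))$, using $\rad(eAe) = e\,\rad(A)\,e$ and $\delta(\rad A) \subseteq \rad A$. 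Note also that the paper only verifies one direction, $\Der_{\rad}(A) \to \Der_{\rad}(eAe)$, and does not need your proposed ``extension from $eAe$ back to $A$'' step; by symmetry of Morita equivalence and finite-dimensionality, the one-sided containment of subspaces already forces equality. So: right approach, correct identification of the crux, but the crux is left unexecuted, and the derived-invariance shortcut you offer for the ambient $\HHH^1$ isomorphism does not actually relieve you of the explicit formula.
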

	\begin{proof}
We can assume that $B=\End_A(P) \cong eAe$, for a progenerator $P=eA$. It is well known that $\HHH^1$ is Morita invariant, and an explicit isomorphism is constructed as follows: for $\delta \in \Der(A)$, denote by
\begin{equation}
\delta^*:P \to P: x \mapsto \sum_i p_i\delta(f_i(x)),
\end{equation}
where $\{(p_i,f_i) \mid  p_i \in P, f_i \in \Hom_A(P,A)\}$ is a dual basis for $P$, that is, 
$x=\sum_i p_i f_i(x)$ for all $x\in P$. Then, as shown in \cite[Theorem 4.1]{farkas2002smooth}, the map
\begin{equation}
\Phi:\HHH^1(A) \to \HHH^1(B): \delta \mapsto [\delta^*,-]
\end{equation}
is well-defined, and is an isomorphism. Assume $\delta \in \Der_{\rad}(A)$, and $f \in \rad(\End_A(P))$. So there is an $r\in \rad(eAe) \subseteq \rad(A)$ such that $f(x)=r\cdot x$ for any $x\in P$. Then 
\begin{align}
[\delta^*,f](e)&=\sum_i p_i\delta(f_i(r)) - f\big(\sum_i p_i\delta(f_i(e))\big) \\
&= \sum_i p_i\delta(f_i(r)) - \sum_i  rp_i\delta(f_i(e)) \in \rad(P),
\end{align}
since $\delta \in \Der_{\rad}(A)$ and $f_i(r) \in \rad(A)$, so we are done.
	\end{proof}
	
	The following two lemmas will be crucial in the proof of our main theorem.

\begin{lem}
	\label{remark idempotents map to zero}
		Let $e_1,\ldots,e_n$ denote a full set of orthogonal primitive idempotents in $A$.
		If $\delta \in \Der(A)$, then there exists an $a\in \rad(A)$ such that 
		$\delta(e_i)=[a,e_i]$ for all $i\in\{1,\ldots,n\}$. 
	\end{lem}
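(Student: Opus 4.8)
The plan is to reduce modulo the radical and lift back. Write $\bar A = A/\rad(A) \cong \prod_{i=1}^{n} M_{d_i}(k)$ (using separability of $A/\rad(A)$), and let $\bar\delta$ be the derivation of $\bar A$ induced by $\delta$ — this makes sense because $\delta(\rad(A)) \subseteq \rad(A)$ whenever $\delta$ happens to be in $\Der_{\rad}(A)$; for a general $\delta \in \Der(A)$ one first has to argue that the images of the $e_i$ behave well, so I would instead work directly with the idempotents. The key point is that in $\bar A$, every derivation is inner: $\bar\delta = [\bar b, -]$ for some $\bar b \in \bar A$. Lifting $\bar b$ to any $b \in A$, the derivation $\delta' = \delta - [b,-]$ then satisfies $\delta'(e_i) \equiv 0 \pmod{\rad(A)}$ for all $i$.

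Next I would upgrade ``$\delta'(e_i) \in \rad(A)$'' to ``$\delta'(e_i) = [c, e_i]$ for some $c \in \rad(A)$''. The standard trick: since $e_i$ is idempotent, differentiating $e_i^2 = e_i$ gives $\delta'(e_i) = \delta'(e_i)e_i + e_i\delta'(e_i)$, so writing $f_i := \delta'(e_i) \in \rad(A)$ we have $e_i f_i e_i = 0$ and $f_i = e_i f_i(1-e_i) + (1-e_i)f_i e_i$. Because the $e_i$ are orthogonal with $\sum e_i = 1$, one can assemble a single element: set $c = \sum_{i} \big( e_i f_i(1-e_i) - (1-e_i)f_i e_i \big)$, or more cleanly handle the idempotents one at a time and add up the resulting corrections, checking orthogonality ensures no interference. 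A short computation then gives $[c, e_i] = e_i c e_i \cdot(\text{signs}) = \delta'(e_i)$ for each $i$, and $c \in \rad(A)$ since each $f_i \in \rad(A)$ and $\rad(A)$ is a two-sided ideal. Finally, setting $a = b + c$ — wait, $b$ need not lie in $\rad(A)$, so instead I note that after the first reduction $\delta - [b,-]$ has $\delta'(e_i) = [c,e_i]$ with $c \in \rad(A)$; but the statement demands $a \in \rad(A)$ with $\delta(e_i) = [a,e_i]$, so I must absorb $b$. Here one uses that $\bar b$ may be modified by any element of $\prod_i k\cdot \mathrm{id}_{d_i}$ (the center of $\bar A$) without changing $[\bar b,-]$, and that one can choose the lift $b$ to be a sum of matrix units avoiding the identity components — but the cleanest route is: the obstruction to $b \in \rad(A)$ is its semisimple part, and since $[b,-]$ is a derivation of $A$ killing no information we only care about, one decomposes $b = b_{ss} + b_{rad}$ via a Wedderburn–Malcev lift and observes $b_{ss}$ commutes with a suitable choice of the $e_i$ (the $e_i$ can be taken inside the semisimple part's centralizer), so $[b, e_i] = [b_{rad}, e_i]$ and we take $a = b_{rad} + c \in \rad(A)$.

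The main obstacle is exactly this last bookkeeping step: ensuring the correcting element can be taken in $\rad(A)$ rather than merely in $A$. The resolution hinges on compatibility between the chosen primitive idempotents $e_i$ and a Wedderburn–Malcev complement $\bar A \hookrightarrow A$ — concretely, one should first fix the decomposition $A = \bar A \oplus \rad(A)$ as vector spaces with $\bar A$ a subalgebra, choose the $e_i$ to be idempotents of the subalgebra $\bar A$, and run the entire argument with $b \in \bar A$; then $\bar b$ inner in $\bar A$ means $b$ can be chosen in $\bar A$, and the residual correction $c$ lands in $\rad(A)$, but now $[b, e_i]$ with $b, e_i \in \bar A$ reproduces $\bar\delta(e_i)$ which vanishes, forcing $b$'s contribution to $\delta(e_i)$ to already be zero — hence $a = c \in \rad(A)$ works. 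Everything else (derivation identities on idempotents, $\rad(A)$ being an ideal) is routine.
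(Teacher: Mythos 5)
The paper's proof is a one-step direct computation: set
\[
a \;=\; \sum_{j=1}^n (1-e_j)\,\delta(e_j)\,e_j
\]
and verify $[a,e_i]=\delta(e_i)$ using only the two identities $e_i\delta(e_i)e_i=0$ (from $\delta(e_i^2)=\delta(e_i)$) and $e_i\delta(e_j)=-\delta(e_i)e_j$ for $i\neq j$ (from $\delta(e_ie_j)=0$). That $a\in\rad(A)$ is immediate in the situation the lemma is used in (basic algebras $kQ/I$ with their standard idempotents), since $(1-e_j)Ae_j=\sum_{i\neq j}e_iAe_j\subseteq\rad(A)$ when the $e_i$ are pairwise non-isomorphic. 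No passage to $A/\rad(A)$ and no Wedderburn--Malcev splitting are needed.

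Your proposal is structurally much more elaborate, and its core ``correction'' step has a genuine bug. Set $f_i=\delta'(e_i)$ and take your
\[
c=\sum_i\bigl(e_if_i(1-e_i)-(1-e_i)f_ie_i\bigr).
\]
Using $e_if_ie_i=0$, $(1-e_i)f_i(1-e_i)=0$ (both from $f_i=f_ie_i+e_if_i$) and $e_if_j=-f_ie_j$ for $i\neq j$, one computes
\[
[c,e_i]=-2\bigl((1-e_i)f_ie_i+e_if_i(1-e_i)\bigr)=-2f_i,
\]
not $f_i$. This is not just a sign slip: it collapses to $0$ in characteristic $2$, where the lemma is also supposed to hold (no characteristic assumption is in force in this section), and otherwise requires an ad hoc rescaling by $-\tfrac12$. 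The paper's element is in effect one ``half'' of yours, namely $\sum_j(1-e_j)f_je_j$ alone, and a short check shows that this summand already produces exactly $f_i$, with no stray factor. Your intermediate expression ``$[c,e_i]=e_ice_i\cdot(\text{signs})$'' is a red flag that the computation was not actually carried out; note $e_ice_i=0$ for this $c$.

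Separately, the opening reduction is both shaky and unnecessary. As you observe, $\bar\delta$ on $A/\rad(A)$ is not defined unless $\delta$ preserves the radical, which is not assumed here. But for a basic $A$ the decomposition $\delta(e_i)=e_i\delta(e_i)(1-e_i)+(1-e_i)\delta(e_i)e_i$ already places $\delta(e_i)$ in $\rad(A)$, so the element $b$ from your first paragraph can simply be taken to be $0$, and the entire Wedderburn--Malcev bookkeeping in your last paragraph evaporates. In short: drop the reduction, drop the splitting, fix the coefficient, and you recover the paper's one-line argument.
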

	\begin{proof}
		Set $a=\sum_{j=1}^n (1-e_j)\delta(e_j) e_j$. Then
		
			\begin{align*}
			[a,e_i] &= (1-e_i)\cdot \delta(e_i)\cdot e_i-\sum_{j\neq i} e_i \cdot\delta(e_j)\cdot e_j
			\\&=-\delta(1-e_i)\cdot e_i+\sum_{j\neq i} \delta(e_i)\cdot e_j\\&=\delta(e_i)\cdot e_i + \delta(e_i)\cdot(1-e_i)=\delta(e_i)  \qedhere
			\end{align*}
		
	\end{proof}
	
		\begin{lem}
	\label{lem:idempotent-hh}
		If $e$ is an idempotent in a finite-dimensional $k$-algebra $A$, then there is a well-defined
		restriction map
		$$
			\res:\ \HHH^1_{\rad}(A) \longrightarrow \HHH^1_{\rad}(eAe):\ \delta + \Inn(A) \mapsto \delta|_{eAe} + \Inn(eAe) 
		$$
		where we assume without loss of generality that  $\delta(e)=0$.
	\end{lem}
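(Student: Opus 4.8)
The plan is to produce the map in three stages: first justify that one may normalize $\delta$ so that $\delta(e)=0$, then check the restriction $\delta|_{eAe}$ lands in $\Der_{\rad}(eAe)$, and finally check that the assignment is well-defined on cohomology, i.e. that it does not depend on the choice of representative $\delta$ nor on the normalization. For the normalization: given any $\delta\in\Der_{\rad}(A)$, the element $e$ is an idempotent, so $\delta(e)=\delta(e^2)=\delta(e)e+e\delta(e)$; one checks directly that $a:=e\delta(e)-\delta(e)e = e\delta(e)(1-e)-(1-e)\delta(e)e$ satisfies $[a,e]=\delta(e)$ (this is the usual computation, analogous to Lemma \ref{remark idempotents map to zero} applied to the idempotent $e$ together with $1-e$). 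Hence $\delta-[a,-]$ is a derivation in the same class as $\delta$ killing $e$, and since $a\in eA(1-e)+(1-e)Ae$ lies in $\rad(A)$ only when... — here one should instead observe that $[a,-]$ is inner, hence in $\Der_{\rad}(A)$, so $\delta-[a,-]\in\Der_{\rad}(A)$ automatically. So replacing $\delta$ by $\delta-[a,-]$ we may assume $\delta(e)=0$.

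With $\delta(e)=0$, for $x\in eAe$ we have $\delta(x)=\delta(exe)=\delta(e)xe+e\delta(x)e+ex\delta(e)=e\delta(x)e\in eAe$, so $\delta|_{eAe}$ is a well-defined $k$-linear endomorphism of $eAe$, and it is a derivation of $eAe$ since the multiplication on $eAe$ is the restriction of that on $A$. Moreover $\rad(eAe)=e\rad(A)e$, and $\delta\in\Der_{\rad}(A)$ gives $\delta(e\rad(A)e)\subseteq e\delta(\rad(A))e\subseteq e\rad(A)e$ (using $\delta(e)=0$ again), so $\delta|_{eAe}\in\Der_{\rad}(eAe)$, giving a class $\delta|_{eAe}+\Inn(eAe)\in\HHH^1_{\rad}(eAe)$.

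It remains to check this class is independent of choices. If $\delta$ is replaced by $\delta+[b,-]$ for $b\in A$, then after re-normalizing to kill $e$ we must compare the two resulting restrictions; it suffices to treat the case of an inner derivation $[b,-]$ which already kills $e$, i.e. $[b,e]=0$, meaning $b=ebe+(1-e)b(1-e)$. Then $[b,-]|_{eAe}=[ebe,-]$ on $eAe$, which is inner in $eAe$, so it contributes nothing in $\HHH^1_{\rad}(eAe)$. One also checks that two different choices $a,a'$ of normalizing elements with $[a,e]=[a',e]=\delta(e)$ differ by an element $c=a-a'$ with $[c,e]=0$, reducing to the previous case. The main obstacle, and the only point requiring genuine care rather than routine bookkeeping, is verifying that the normalization can be carried out \emph{compatibly with being radical-preserving} — i.e. that the ambiguity $[b,-]$ with $b$ not necessarily in $\rad(A)$ still restricts to an inner derivation of $eAe$ coming from $ebe\in eAe$ — since $ebe$ need not lie in $\rad(eAe)$, but this is fine because inner derivations of $eAe$ (by arbitrary elements of $eAe$) are exactly what we quotient by in $\HHH^1_{\rad}(eAe)$. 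So the restriction map descends to Hochschild cohomology as claimed.
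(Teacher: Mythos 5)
Your proposal is correct and follows essentially the same route as the paper: normalize $\delta$ by subtracting an inner derivation so that $\delta(e)=0$ (the paper invokes Lemma~\ref{remark idempotents map to zero}, while you carry out the $\{e,1-e\}$ computation directly), then observe that inner derivations killing $e$ restrict to $[ebe,-]$ on $eAe$, from which well-definedness follows. Only a small slip: with your $a=e\delta(e)-\delta(e)e$ one gets $[a,e]=-\delta(e)$ rather than $\delta(e)$; the correct normalizer is $a=(1-e)\delta(e)e-e\delta(e)(1-e)$, and your key observation that one only needs $[a,-]$ to be inner (not $a\in\rad(A)$) is exactly right.
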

	\begin{proof}
		By Lemma~\ref{remark idempotents map to zero} we can write $\HHH^1_{\rad}(A)$ as the quotient of 
		derivations mapping $e$ to zero by inner derivations with the same property. Moreover, an inner derivation $[a,-]$ mapping $e$ to zero coincides with $[eae,-]$ on $eAe$. Hence $\res$ is well-defined, and it is also a homomorphism of Lie algebras.
	\end{proof}
	The above would of course be equally true with the subscript ``$\rad$'' removed.
	We now give a useful sufficient condition for solvability of the Lie algebra $\HHH^1_{\rad}(A)$.

\begin{prop}
\label{prop:def-pin}
		Let $A$ be a finite-dimensional $k$-algebra, and $3 \leq n\in \NN$.
		There is a morphism of Lie algebras
		$$
			\pi_n: \ \Der_{\rad}(A/\rad (A)^n) \longrightarrow  \Der_{\rad}(A/\rad (A)^{n-1})
		$$
		and $\ker(\pi_n)$ is abelian. In particular, if $\Der_{\rad}(A/\rad (A)^{n-1})$ is solvable, then so is 
		$\Der_{\rad}(A/\rad (A)^n)$.
	\end{prop}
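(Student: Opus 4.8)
The plan is to construct $\pi_n$ as the obvious ``further quotient'' map and then analyze its kernel by hand. Any radical-preserving derivation $\delta$ of $A/\rad(A)^n$ sends $\rad(A/\rad(A)^n) = \rad(A)/\rad(A)^n$ into itself, hence by the Leibniz rule sends $\rad(A)^j/\rad(A)^n$ into itself for every $j$; in particular it sends $\rad(A)^{n-1}/\rad(A)^n$ into itself, so it descends to a derivation of the quotient $(A/\rad(A)^n)/(\rad(A)^{n-1}/\rad(A)^n) \cong A/\rad(A)^{n-1}$, and this descended derivation again preserves the radical. This gives the Lie algebra homomorphism $\pi_n$ (functoriality of the bracket is immediate since $\pi_n$ is induced by an algebra quotient).

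Next I would identify $\ker(\pi_n)$. A derivation $\delta \in \Der_{\rad}(A/\rad(A)^n)$ lies in the kernel exactly when its image lands in $\rad(A)^{n-1}/\rad(A)^n$, i.e. $\delta(A/\rad(A)^n) \subseteq \rad(A)^{n-1}/\rad(A)^n$. Since $n \geq 3$, we have $\rad(A)^{n-1} \subseteq \rad(A)^2$, so in particular $\delta(\rad(A)^j/\rad(A)^n) \subseteq \rad(A)^{n-1+j}/\rad(A)^n = 0$ for all $j \geq 1$ — that is, every kernel derivation kills the radical entirely. Now for two such derivations $\delta, \delta'$ and any $x \in A/\rad(A)^n$, the Leibniz rule gives $[\delta,\delta'](x) = \delta(\delta'(x)) - \delta'(\delta(x))$; but $\delta'(x) \in \rad(A)^{n-1}/\rad(A)^n \subseteq \rad(A)/\rad(A)^n$ is killed by $\delta$ (as just noted), and symmetrically, so $[\delta,\delta'] = 0$. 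Hence $\ker(\pi_n)$ is abelian.

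The final ``in particular'' is then purely formal: if $\Der_{\rad}(A/\rad(A)^{n-1})$ is solvable, then since $\ker(\pi_n)$ is abelian (hence solvable) and $\im(\pi_n) \subseteq \Der_{\rad}(A/\rad(A)^{n-1})$ is a subalgebra of a solvable Lie algebra (hence solvable), the extension $\Der_{\rad}(A/\rad(A)^n)$ is solvable, using the standard fact that an extension of a solvable Lie algebra by a solvable ideal is solvable.

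I do not anticipate a serious obstacle here; this is a bookkeeping argument about filtration powers of the radical. The only point requiring a little care is the hypothesis $n \geq 3$: it is exactly what forces $\rad(A)^{n-1} \subseteq \rad(A)^2$, which in turn is what makes kernel elements annihilate the radical (for $n = 2$ the kernel would consist of derivations valued in $\rad(A)/\rad(A)^2$, which need not annihilate the radical, and need not be abelian). I would also double-check that "radical-preserving" is genuinely needed to make $\pi_n$ land in $\Der_{\rad}$ rather than merely $\Der$, and note in passing that the map exists on all of $\Der$ as well; but the solvability application only needs the radical-preserving version, matching the diagram \eqref{diagram big picture}.
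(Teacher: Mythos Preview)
Your approach is the same as the paper's, but there is an off-by-one slip in the key step. You claim that a kernel derivation $\delta$ satisfies $\delta(\rad(A)^j/\rad(A)^n) \subseteq \rad(A)^{n-1+j}/\rad(A)^n$ for all $j\geq 1$, and in particular that $\delta$ ``kills the radical entirely''. This is false for $j=1$: from $\delta(A)\subseteq \rad(A)^{n-1}$ you only get $\delta(\rad(A))\subseteq \rad(A)^{n-1}$, not $\rad(A)^{n}$. The Leibniz rule gives $\delta(\rad(A)^j)\subseteq \rad(A)^{\,n+j-2}$, so your exponent is one too high throughout. In particular, kernel derivations do \emph{not} annihilate $\rad(A)$ in general.

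The fix is exactly what the paper does: one only needs that kernel derivations annihilate $\rad(A)^2$ (which is the $j=2$ case of the corrected inclusion, giving $\rad(A)^n=0$). Since $n\geq 3$ forces $\rad(A)^{n-1}\subseteq \rad(A)^2$, the image $\delta'(x)\in\rad(A)^{n-1}$ lies in $\rad(A)^2$ and is therefore killed by $\delta$, so $[\delta,\delta']=0$ as you intended. With that correction your argument matches the paper's proof verbatim.
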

		\begin{proof}
		Let $\delta\in \Der_{\rad}(A/\rad (A)^n)$. Since $\delta(\rad (A))\subset \rad (A)$, also $\delta(\rad (A)^{n-1})\subseteq \rad (A)^{n-1}$. Hence we may define $\pi_n(\delta)(x+\rad (A)^{n-1})=\delta(x)+\rad (A)^{n-1}$. Now let $\xi$ be an element of the kernel of 
		$\pi_n$. That is, $\xi(x)\in \rad (A)^{n-1}$ for all $x\in A$. For $x,y\in \rad (A)$ we then have
		$\xi(x\cdot y)= x\cdot \xi(y)+\xi(x)\cdot y \in \rad (A)\cdot \rad (A)^{n-1}+\rad (A)^{n-1}\cdot \rad (A) = \rad (A)^n$. It follows that $\xi(\rad (A)^2) = \{0\}$. Hence, if $\mu$ is another element of $\ker(\pi_n)$, then 
		$\mu\circ \xi$ and $\xi\circ \mu$ are both zero (as $\mu$ and $\xi$ both map $A$ into $\rad (A)^{n-1} \subseteq \rad (A)^2$). In particular, $[\xi,\mu]=0$, and it follows that  $\ker(\pi_n)$ is abelian. For the last claim, if $\Der_{\rad}(A/\rad (A)^{n-1})$ is solvable, then so is its sub Lie algebra $\im(\pi)$. It follows that $\Der_{\rad}(A/\rad (A)^n)$ is solvable, being an extension of the solvable Lie algebra $\im(\pi_n)$ by the abelian Lie algebra $\ker(\pi_n)$.
	\end{proof}
	
		\begin{cor}
		\label{cor:iteration}
		Let $A$ be a finite-dimensional $k$-algebra, and $3 \leq n\in \NN$.
		There is a morphism of Lie algebras 
		$$
			\HHH^1_{\rad}(A/\rad (A)^n) \longrightarrow \HHH^1_{\rad}(A/\rad (A)^{n-1})
		$$
		with abelian kernel.
		 If 
		$\HHH^1_{\rad}(A/\rad (A)^{n-1})$ is solvable, then so is $\HHH^1_{\rad}(A/\rad (A)^n)$. In particular, if $\HHH^1_{\rad}(A/\rad (A)^{2})$ is solvable, then so is $\HHH^1_{\rad}(A)$.
	\end{cor}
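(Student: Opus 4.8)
The plan is to deduce the corollary from Proposition~\ref{prop:def-pin} by passing to the quotients by inner derivations and then running a short induction. First I would observe that the map $\pi_n$ of Proposition~\ref{prop:def-pin} sends inner derivations to inner derivations: if $\delta=[a,-]$ for some $a\in A/\rad(A)^n$, then
\[
\pi_n(\delta)(x+\rad(A)^{n-1}) = [a,x]+\rad(A)^{n-1} = [\bar a,\, x+\rad(A)^{n-1}],
\]
where $\bar a$ denotes the image of $a$ in $A/\rad(A)^{n-1}$; hence $\pi_n(\Inn(A/\rad(A)^n))\subseteq \Inn(A/\rad(A)^{n-1})$, and $\pi_n$ descends to a homomorphism of Lie algebras $\bar\pi_n\colon \HHH^1_{\rad}(A/\rad(A)^n)\to \HHH^1_{\rad}(A/\rad(A)^{n-1})$.

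Next I would compute $\ker(\bar\pi_n)$. A class $\delta+\Inn(A/\rad(A)^n)$ lies in the kernel exactly when $\pi_n(\delta)$ is inner, say $\pi_n(\delta)=[\bar a,-]$; choosing any lift $a\in A/\rad(A)^n$ of $\bar a$ and subtracting the inner derivation $[a,-]$, we may assume $\delta\in\ker(\pi_n)$. Thus $\ker(\bar\pi_n)=(\ker(\pi_n)+\Inn(A/\rad(A)^n))/\Inn(A/\rad(A)^n)$, which is a homomorphic image of the Lie algebra $\ker(\pi_n)$; since the latter is abelian by Proposition~\ref{prop:def-pin}, so is $\ker(\bar\pi_n)$.

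The solvability statement is then formal. If $\HHH^1_{\rad}(A/\rad(A)^{n-1})$ is solvable, then so is its Lie subalgebra $\im(\bar\pi_n)$, and $\HHH^1_{\rad}(A/\rad(A)^n)$ is solvable, being an extension of the solvable Lie algebra $\im(\bar\pi_n)$ by the abelian ideal $\ker(\bar\pi_n)$. For the last assertion I would iterate: since $A$ is finite-dimensional we have $\rad(A)^N=0$ for some $N$, so that $A/\rad(A)^N=A$ (and if $\rad(A)^2=0$ there is nothing to prove); starting from solvability of $\HHH^1_{\rad}(A/\rad(A)^2)$ and applying the first part of the corollary successively for $n=3,4,\dots,N$ yields solvability of $\HHH^1_{\rad}(A)$.

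I do not expect a genuine obstacle: all the content is packaged in Proposition~\ref{prop:def-pin}. The only point demanding a little care is the well-definedness of $\bar\pi_n$ together with the identification of its kernel as a quotient of $\ker(\pi_n)$ rather than merely a related Lie algebra; both are immediate once the relevant commutators are written out, as above.
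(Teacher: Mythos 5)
Your argument is correct and follows the same route as the paper: pass $\pi_n$ from Proposition~\ref{prop:def-pin} to the quotient by inner derivations, identify the kernel as a homomorphic image of $\ker(\pi_n)$ (using, as you note, that inner derivations of $A/\rad(A)^{n-1}$ lift to inner derivations of $A/\rad(A)^n$), and iterate. The paper states this more tersely but the content and mechanism are identical.
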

	\begin{proof} The first statement follows because the map $\pi_n$ from above maps inner derivations surjectively onto inner derivations. The second statement follows immediately, and the third statement follows since $\rad (A)^n=0$ for some $n\in \NN$.
	\end{proof}

\subsection{Derivations preserving the radical}

Most results in this paper are phrased in terms of $\HHH^1_{\rad}(A)$, so to apply them to the usual Hochschild cohomology, we would like to know when $\HHH^1_{\rad}(A)=\HHH^1(A)$.
 
For a finite-dimensional algebra $A$ defined over a field of characteristic $0$, it is well known that every derivation $\delta \in \Der(A)$ preserves the radical, i.e. $\delta(\rad (A)) \subset \rad (A)$ (see for example~\cite[Theorem 4.2]{hochschild1942semi}), so in this case it is immediate that $\HHH^1_{\rad}(A)=\HHH^1(A)$. For algebras defined over a field of positive characteristic, this does not however have to be the case.  

\begin{ex}\label{ex:witt}
Assume $\cha(k)=p>0$ and consider the algebra $A=k[x]/(x^p)$.
\begin{enumerate}
\item  Since $A$ is commutative, $\HHH^1(A)\cong\Der(k[x]/(x^p))$ is the Witt algebra. For $p>2$ the Witt algebra is not solvable.
\item Consider the derivation $\delta \in \Der(A)$ determined by $\delta(x)=1$. Then $\delta \notin \Der_{\rad}(A)$, hence $\HHH^1_{\rad}(A)\neq\HHH^1(A)$. This example generalises to group algebras of (non-trivial) finite $p$-groups.
\end{enumerate}
In particular $\HHH^1(A)$ is non-solvable in this example (even though $A$ is of finite representation type), but $\HHH^1_{\rad}(A)$ is solvable by Corollary~\ref{cor:fin-rep-type} below.
\end{ex}

The following shows that, in spite of this example, there is a large class of algebras for which $\HHH^1$ and $\HHH^1_{\rad}$ coincide.

\begin{prop}
\label{prop:der-rad}
Assume that $A=kQ/I$. For every loop $a \in Q_1$, let $n_a$ denote the minimal integer such that $a^{n_a} \in \rad (A)^{n_a+1}$. If $p \nmid \prod_{a} n_a$, then 
$$\Der_{\rad}(A)=\Der(A) \quad\textrm{ and }\quad \HHH^1_{\rad}(A)=\HHH^1(A).$$
\end{prop}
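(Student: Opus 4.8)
The plan is to prove the first equality $\Der_{\rad}(A)=\Der(A)$; the statement $\HHH^1_{\rad}(A)=\HHH^1(A)$ then follows at once, since $\Inn(A)\subseteq\Der_{\rad}(A)$ always and both Hochschild cohomologies are the quotient by $\Inn(A)$. So fix $\delta\in\Der(A)$; the goal is $\delta(\rad(A))\subseteq\rad(A)$. First I would normalize $\delta$: by Lemma~\ref{remark idempotents map to zero} there is an $a\in\rad(A)$ with $\delta(e_i)=[a,e_i]$ for every vertex $i$, and since $[a,-]$ preserves the two-sided ideal $\rad(A)$ when $a\in\rad(A)$, I may replace $\delta$ by $\delta-[a,-]$ and assume $\delta(e_i)=0$ for all $i\in Q_0$. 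Under this normalization an arrow $b\colon i\to j$ satisfies $\delta(b)=e_j\delta(b)e_i\in e_jAe_i$; since $A=kQ/I$ is basic with $A/\rad(A)\cong\bigoplus_{i\in Q_0}k\bar e_i$, the subspace $e_jAe_i$ lies in $\rad(A)$ whenever $i\neq j$. As $\rad(A)$ is generated as an ideal by the arrows, the Leibniz rule shows $\delta(\rad(A))\subseteq\rad(A)$ will follow once $\delta(b)\in\rad(A)$ for every arrow $b$; hence the only thing to rule out is that, for a loop $b\colon i\to i$, the ``semisimple part'' $\lambda_b\in k$ in the decomposition $\delta(b)=\lambda_be_i+r_b$ (with $r_b\in\rad(A)$) is nonzero.

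The crux is then a short computation using the relation hidden in the definition of $n_b$. I would first record that, for \emph{any} derivation $\delta$ killing the $e_i$ (whether or not it preserves the radical), one has $\delta(\rad(A)^m)\subseteq\rad(A)^{m-1}$ for every $m$: a path of length $\ell\geq m$ is a product of $\ell$ arrows, and in each Leibniz summand exactly one arrow $c$ gets replaced by $\delta(c)$, which at worst shortens the path by one, leaving length $\geq\ell-1\geq m-1$. Second, for a loop $b\colon i\to i$, expanding $\delta(b^n)=\sum_{j=0}^{n-1}b^j\delta(b)b^{\,n-1-j}$ with $b=e_ibe_i$ and $\delta(b)=\lambda_be_i+r_b$, every term involving $r_b$ (or a positive power of $b$ flanking $\lambda_be_i$) lies in $\rad(A)^n$, and the remaining terms collapse to $n\lambda_b b^{\,n-1}$; thus
\[
\delta(b^n)\equiv n\,\lambda_b\,b^{\,n-1}\pmod{\rad(A)^n}.
\]

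Now take $n=n_b$. By definition $b^{n_b}\in\rad(A)^{n_b+1}$, so the first recorded fact gives $\delta(b^{n_b})\in\rad(A)^{n_b}$, and combining with the displayed congruence yields $n_b\lambda_b b^{\,n_b-1}\in\rad(A)^{n_b}$. But $n_b\geq 2$ (an arrow is never in $\rad(A)^2$), and minimality of $n_b$ forces $b^{\,n_b-1}\notin\rad(A)^{n_b}$, so $b^{\,n_b-1}$ represents a nonzero vector of the $k$-vector space $A/\rad(A)^{n_b}$. Therefore the scalar $n_b\lambda_b$ vanishes in $k$, and since $p\nmid n_b$ (as $p\nmid\prod_a n_a$) we conclude $\lambda_b=0$, which is what remained.

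I expect the only genuinely delicate point to be the bookkeeping around the first recorded fact — checking that once all $\lambda_b$ vanish $\delta$ automatically preserves every power $\rad(A)^m$, and, relatedly, that this fact holds \emph{unconditionally} for idempotent-killing $\delta$, so that deducing $\delta(b^{n_b})\in\rad(A)^{n_b}$ from $b^{n_b}\in\rad(A)^{n_b+1}$ does not secretly assume what we are trying to prove. Everything else (the normalization via Lemma~\ref{remark idempotents map to zero}, the Leibniz expansion of $\delta(b^n)$, and the final extraction of the scalar equation $n_b\lambda_b=0$) is routine, and the passage from $\Der$ to $\HHH^1$ is purely formal.
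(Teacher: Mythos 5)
Your proposal is correct and follows essentially the same route as the paper's proof: normalize via Lemma~\ref{remark idempotents map to zero} so that $\delta$ kills the standard idempotents, observe that only loops could then spoil radical-preservation, and use the Leibniz expansion of $\delta(b^{n_b})$ together with the unconditional bound $\delta(\rad(A)^{m})\subseteq\rad(A)^{m-1}$ to extract the scalar equation $n_b\lambda_b=0$, which forces $\lambda_b=0$ since $p\nmid n_b$. The only difference is that you spell out the "loses at most one radical degree" fact that the paper uses tacitly; your worry about circularity is correctly resolved, since that fact holds for any idempotent-killing derivation without assuming it preserves the radical.
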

\begin{proof}
Let $\delta \in \Der(A)$.
For $i\in Q_0$ denote by $e_i$ the corresponding standard idempotent.
By Lemma~\ref{remark idempotents map to zero} (as well as the fact that  $\Inn(A) \subset \Der_{\rad}(A)$) we can assume that $\delta(e_i)=0$ for all $i$.
Given an arrow $a \in Q_1$ with $s(a)=e_i$ and $t(a)=e_j$ which is not a loop, so $i \neq j$, we find that $\delta(a) \subset e_iAe_j \subset \rad (A)$.
Given a loop $a \in e_iAe_i$
 we have that $\delta(a)=\lambda e_i + r$, for $r \in \rad (A)$ and $\lambda\in k$. Then, since $\delta(\rad (A)^{n_a+1}) \subset \rad (A)^{n_a}$, we find
\begin{equation}
\rad (A)^{n_a}  \ni \delta(a^{n_a})=n_a\lambda a^{n_a-1} +\sum_{k=0}^{n_a-1}a^kra^{n_a-1-k},
\end{equation}
and since $p \nmid n_a$, we find that $\lambda a^{n_a-1} \in \rad (A)^{n_a}$. This contradicts the minimality of $n_a$, unless $\lambda=0$, in which case we have $\delta(a) \in \rad (A)$.
\end{proof}

\begin{rem}
	Proposition~\ref{prop:der-rad} is far from exhaustive. If, for example, for every loop in $a\in Q_1$ there is an arrow $b\in Q_1$ which is not a loop such that $t(a)=s(b)$ and $ab\in I$ (or $t(b)=s(a)$ and $ba\in I$), then $\HHH^1(A)=\HHH^1_{\rad}(A)$.
	
	To see this assume that $a\in Q_1$ is a loop and $e$ is the idempotent corresponding to $s(a)=t(a)$.  If there is 
	a derivation $\delta$ of $A$ with $\delta(e)=0$ such that $\delta(a)\not\in \rad(A)$, then $\delta(a)$ is a unit in $eAe$, and  $0=\delta(ab)=\delta(a)b+a\delta(b)$ then implies that $b=-\delta(a)^{-1}a\delta(b)$, that  is, $b\in \rad^2(A)$ (which contradicts $b$ being an arrow). This can be used to show that $\HHH^1(A)=\HHH^1_{\rad}(A)$ if $A$ is a non-local Brauer graph algebra (cf. \cite[Proof of Theorem~4.3]{schrolletal2019hochschild}).
\end{rem}

\begin{rem}
	Let $A=kQ/I$ be a finite-dimensional algebra, and let $e_i$ denote  the standard idempotent associated with the vertex $i\in Q_0$. One can define the ideal $$J=\sum_{i\neq j} Ae_iAe_jA \subset \rad(A).$$
	Derivations mapping the $e_i$'s to zero clearly stabilise $J$. Hence there is a morphism of Lie algebras
	$$
		\HHH^1(A) \longrightarrow \HHH^1(A/J)
	$$ 
	whose kernel is contained in $\HHH^1_{\rad}(A)$. The quiver of the algebra $A/J$ is a disjoint union of bouquets of loops, and if $A$ is of finite or tame representation type, then one can show that the number of loops in such a bouquet is bounded by one or two, respectively.
	This gives us the upper part of diagram~\eqref{diagram big picture}.
	
	 In concrete examples one could attempt to describe $\HHH^1(A/J)$ explicitly (the outcome should be similar to Example~\ref{ex:witt}), and then infer properties of $\HHH^1(A)$ from properties of $\HHH^1(A/J)$ and $\HHH^1_{\rad}(A)$. This is, however, not an approach we will pursue further in the present article.
\end{rem}

\section{Hochschild cohomology of radical square zero algebras}
\label{sec:rad-sq-zero}
By Corollary \ref{cor:iteration}, solvability of $\HHH^1_{\rad}$ can be established by looking at a radical square zero algebra. So let us assume in this section  that $A$ is a finite-dimensional $k$-algebra with $\rad (A)^2=\{0\}$, and $A/\rad(A)$ separable over $k$. By \cite[X.2.4]{auslander1997representation}, there is a stable equivalence of additive categories
\begin{equation}
F:\smodu(A) \to \smodu(\Sigma),
\end{equation}
where 
\begin{equation}
\Sigma=
\begin{pmatrix}
A/\rad (A) & \rad (A)\\
0 & A/\rad (A)
\end{pmatrix}
\end{equation}
is a hereditary algebra. 

\begin{lem}
\label{lem:der-sigma}
Assume $B$ is a basic hereditary algebra with $B/\rad (B)$ separable over $k$. Then for any $\delta \in \Der_{\rad}(B)$, we have that $\im(\delta)\subset \rad (B)$.
\end{lem}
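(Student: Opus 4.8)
The plan is to pass to the quiver description of $B$ and reduce everything to the action of $\delta$ on idempotents and arrows.

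\emph{Step 1: quiver description.} Since $B$ is basic, finite-dimensional and hereditary with $B/\rad(B)$ separable over $k$, it is isomorphic to the path algebra $kQ$ of a finite quiver $Q$ with no oriented cycles; in particular $Q$ has no loops. Consequently $\rad(B)$ is the arrow ideal, and for the standard idempotents $e_i$ ($i\in Q_0$) one has $e_j B e_i\subseteq\rad(B)$ whenever $i\neq j$.

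\emph{Step 2: reduce to $\delta(e_i)=0$.} By Lemma~\ref{remark idempotents map to zero} there is some $a\in\rad(B)$ with $\delta(e_i)=[a,e_i]$ for all $i$, so $\delta':=\delta-[a,-]$ is again a derivation preserving $\rad(B)$ and satisfies $\delta'(e_i)=0$ for all $i$. Since $\rad(B)$ is a two-sided ideal containing $a$, we have $\im([a,-])\subseteq\rad(B)$; hence $\im(\delta)\subseteq\rad(B)$ if and only if $\im(\delta')\subseteq\rad(B)$, and we may assume $\delta(e_i)=0$ for every $i$.

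\emph{Step 3: conclude.} For an arrow $a\colon i\to j$ of $Q$ we have $a=e_j a e_i$, hence $\delta(a)=e_j\,\delta(a)\,e_i\in e_j B e_i\subseteq\rad(B)$, using that $i\neq j$ since $Q$ has no loops. Because $B=kQ$ is generated as a $k$-algebra by the $e_i$ together with the arrows, and $\delta$ is a derivation, $\delta$ sends any product of these generators into the two-sided ideal generated by $\{\delta(e_i)\}\cup\{\delta(a):a\in Q_1\}\subseteq\rad(B)$; by $k$-linearity $\im(\delta)\subseteq\rad(B)$.

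The step requiring the most care is the first: extracting the quiver and, above all, ruling out loops — this is exactly where heredity is used, since a loop at a vertex $i$ would let $\delta$ produce an invertible element of $e_iBe_i$ lying outside $\rad(B)$. Everything after that is a formal computation with derivations. (If $B/\rad(B)$ is only assumed to be a product of separable field extensions $D_i$ rather than a product of copies of $k$, one adds the remark that every $k$-derivation of a separable field extension vanishes, so $\delta$ also annihilates each $e_i B e_i\cong D_i$, and the rest goes through unchanged.)
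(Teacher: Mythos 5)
Your proof is correct, but it takes a genuinely different route from the paper's. The paper argues by descent to the semisimple quotient: since $\delta$ preserves $\rad(B)$ it induces a derivation of $B/\rad(B)$, which must vanish because $B$ is basic and $B/\rad(B)$ is separable over $k$ (so $\Der_{\rad}(B/\rad(B)) = \HHH^1_{\rad}(B/\rad(B)) = 0$), and this is exactly the statement that $\im(\delta) \subseteq \rad(B)$. That is a two-line argument, and it uses the hypothesis $\delta \in \Der_{\rad}(B)$ precisely once, to obtain the induced derivation on the quotient. You instead pass to a presentation $B \cong kQ$ with $Q$ acyclic, normalise $\delta$ to kill all standard idempotents via Lemma~\ref{remark idempotents map to zero}, and then check on arrows that $\delta$ lands in $e_jBe_i \subseteq \rad(B)$ because $Q$ has no loops. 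A pleasant by-product of your argument is that it never actually invokes $\delta(\rad(B)) \subseteq \rad(B)$, so it proves the slightly stronger statement that $\im(\delta) \subseteq \rad(B)$ for \emph{every} $\delta \in \Der(B)$; this is consistent with Proposition~\ref{prop:der-rad}, which already guarantees $\Der(B) = \Der_{\rad}(B)$ when the quiver has no loops. The paper's two-liner is shorter and does not require choosing a presentation, while your route is more elementary, makes the role of heredity (absence of loops) visible, and extends to the non-split case exactly as you indicate in your parenthetical remark, with the path algebra replaced by the tensor algebra of a species and the fact that a finite separable field extension of $k$ admits no nonzero $k$-linear derivations doing the work on the diagonal pieces $e_iBe_i$.
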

\begin{proof}
Since a derivation $\delta\in \Der_{\rad}(B)$ maps $\rad (B)$ into itself, there is a homomorphism of Lie algebras 
		\begin{equation}
		\label{eq:der}
		\Der_{\rad}(B)\longrightarrow \Der_{\rad}(B/\rad (B)):\delta \mapsto \delta.
		\end{equation}
Since $B/\rad (B)$ is separable over $k$, and $B$ is basic, $\Der_{\rad}(B/\rad (B))=\HHH^1_{\rad}(B/\rad (B))=0$ and we are done.
\end{proof}

\begin{prop}
\label{prop:ders-rad}
		Assume that $A$ is basic. 
		Then there is an injective homomorphism of Lie algebras
		\begin{equation}
		\label{eq:inj}
		\Der_{\rad}(A)/K_A\longrightarrow \Der_{\rad}(\Sigma)
		\end{equation}
		and a surjective homomorphism of Lie algebras 
		\begin{equation}
		\label{eq:surj}
		\Der_{\rad}(A) \longrightarrow \Der_{\rad}(\Sigma)/K_\Sigma
		\end{equation}
		for the Lie ideals $K_A=\{\varphi \in \Der_{\rad}(A) \mid \varphi(A) \subset \rad (A) \text{ and } \varphi(\rad (A))=\{0\}\}$, and similarly for $K_\Sigma$.
	\end{prop}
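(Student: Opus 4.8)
The plan is to exploit the stable equivalence $F : \smodu(A) \to \smodu(\Sigma)$ together with the description of stable equivalences for radical square zero algebras. Recall that for a stable equivalence induced by an exact functor (or more precisely coming from the ``triangular matrix'' construction of Auslander--Reiten), there is a close relationship between $A$-modules and $\Sigma$-modules at the level of syzygies; concretely, the functor $F$ and its quasi-inverse are given, up to projective summands, by tensoring/Hom-ing against a bimodule, and this is reflected in an isomorphism between $\underline{\End}_A(M)$ and $\underline{\End}_\Sigma(FM)$ for all $M$. Taking $M = A$ (or rather the direct sum of the indecomposable projective-injectives that generate the relevant subcategory), one obtains a comparison between $\HHH^1$-type invariants, but since $F$ is only a \emph{stable} equivalence it does not preserve $\End$ on the nose, only modulo the ideal of morphisms factoring through projectives. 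This is exactly what the ideals $K_A$ and $K_\Sigma$ are designed to absorb.

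**The two maps.** First I would make precise what $K_A$ is: a derivation $\varphi$ with $\varphi(A) \subseteq \rad(A)$ and $\varphi(\rad(A)) = 0$ is determined by a $k$-linear map $A/\rad(A) \to \rad(A)$ that is a derivation for the (semisimple) quotient action, i.e. essentially a choice of ``$0$-to-$1$'' data; one checks directly from the Leibniz rule that $K_A$ is a Lie ideal of $\Der_{\rad}(A)$ and that $[\varphi,\psi] = 0$ for $\varphi,\psi \in K_A$ (same argument as in Proposition~\ref{prop:def-pin}: both map $A$ into $\rad(A)$ and kill $\rad(A)$, so compositions vanish). For the injective map \eqref{eq:inj}: a derivation $\delta \in \Der_{\rad}(A)$ preserves $\rad(A)$, hence induces a compatible pair of maps on $A/\rad(A)$ and on $\rad(A)$, and via the identification of $\Sigma$ as built from $A/\rad(A)$ and the bimodule $\rad(A)$, one gets a derivation $\tilde\delta$ of $\Sigma$ (acting diagonally on the two copies of $A/\rad(A)$ and via the induced map on the off-diagonal $\rad(A)$). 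The kernel of $\delta \mapsto \tilde\delta$ is precisely the set of $\delta$ inducing zero on both $A/\rad(A)$ and $\rad(A)$ — but a derivation killing $\rad(A)$ and inducing zero on $A/\rad(A)$ maps $A$ into $\rad(A)$, so this kernel is exactly $K_A$. For the surjective map \eqref{eq:surj}, I would run the argument in the other direction: an arbitrary $\delta' \in \Der_{\rad}(\Sigma)$ restricts to a map on the off-diagonal bimodule and to maps on the diagonal; by Lemma~\ref{lem:der-sigma}, $\im(\delta') \subseteq \rad(\Sigma)$, which forces the diagonal parts to land in $\rad(A/\rad(A)) = 0$ after projection, pinning down enough structure to lift $\delta'$ to a derivation of $A$ modulo the ideal $K_\Sigma$ of ``$0$-to-$1$'' derivations of $\Sigma$. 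One must check $K_\Sigma$ is indeed the image of the indeterminacy, i.e. that two lifts of the same $\delta' \bmod K_\Sigma$ differ by an element of $K_A$ — but $K_A$ maps into $K_\Sigma$, and the point is that $K_\Sigma$ captures exactly the ambiguity.

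**The role of the stable equivalence.** The cleanest route is probably to package both \eqref{eq:inj} and \eqref{eq:surj} through the observation that $\Der_{\rad}(A)$ and $\Der_{\rad}(\Sigma)$ each sit in a short exact sequence of Lie algebras with $K_A$ (resp. $K_\Sigma$) as kernel and a common quotient $Q$, where $Q$ is built from $\Hom$-spaces between the semisimple part and the bimodule $\rad(A)$ that are manifestly the same for $A$ and for $\Sigma$ by construction of $\Sigma$. Indeed $\Der_{\rad}(A)/K_A$ injects into this $Q$ (a derivation is determined mod $K_A$ by its action on $A/\rad(A)$ and on $\rad(A)$, which is exactly the data of $Q$ minus some linear constraints), while $\Der_{\rad}(\Sigma)$ surjects onto $Q$ after quotienting by $K_\Sigma$ — Lemma~\ref{lem:der-sigma} is what makes the $\Sigma$-side computation tractable, since it kills the ``diagonal $\End$'' contributions that would otherwise obstruct. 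Chasing these identifications gives the composite $\Der_{\rad}(A)/K_A \hookrightarrow Q \twoheadleftarrow \Der_{\rad}(\Sigma)/K_\Sigma$; the asymmetry (inclusion on one side, surjection after a further quotient on the other) reflects that $F$ is only a stable equivalence and need not be an equivalence.

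**Main obstacle.** The hard part will be bookkeeping the precise relationship between derivations of $A$, derivations of $\Sigma$, and the bimodule $\rad(A)$ — in particular verifying that a derivation of $\Sigma$ really does descend to (a coset of) a derivation of $A$, which is where one genuinely uses that $\Sigma$ is hereditary (via Lemma~\ref{lem:der-sigma}) and that $\rad(A)^2 = 0$ (so that $\Der_{\rad}(A)$ is itself almost ``linear'' data: a derivation is a pair consisting of a derivation of the semisimple quotient together with a compatible endomorphism-type action on $\rad(A)$, plus the $0$-to-$1$ part). I expect no deep difficulty beyond this, but the matching of the ideals $K_A \leftrightarrow K_\Sigma$ under the correspondence, and checking Lie-algebra (not just vector-space) compatibility of all maps, will require care.
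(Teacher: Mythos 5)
Your overall strategy matches the paper's: the forward map is $\delta \mapsto \tilde\delta$ acting entrywise on the triangular matrix $\Sigma$, its kernel is exactly $K_A$, and surjectivity onto $\Der_{\rad}(\Sigma)/K_\Sigma$ is obtained by going in the reverse direction with Lemma~\ref{lem:der-sigma} as the key input. But two things deserve comment.

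First, your opening paragraph about the stable equivalence $F$ and stable endomorphism rings is a red herring for this particular proposition: the paper's proof never invokes $F$. It works entirely with the explicit triangular-matrix presentation of $\Sigma$, and the identification of $K_A$ and $K_\Sigma$ as the ``$0$-to-$1$'' derivations is an elementary bookkeeping fact, not something absorbed from the stable category. (The stable equivalence earns its keep later, in Proposition~\ref{prop:rad-square}, to compare representation types.)

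Second, the step you describe as ``pinning down enough structure to lift $\delta'$ to a derivation of $A$ modulo $K_\Sigma$'' is where the argument actually needs a concrete construction that you leave implicit. The paper's move is to choose a Wedderburn complement $A_0 \cong A/\rad(A)$ inside $A$ (this uses the standing separability assumption on $A/\rad(A)$), giving an algebra embedding
\[
i:\ A = A_0 \oplus \rad(A) \hookrightarrow \Sigma,\qquad (x,r)\mapsto \begin{pmatrix} x+\rad(A) & r \\ 0 & x+\rad(A) \end{pmatrix}.
\]
Lemma~\ref{lem:der-sigma} then gives $\xi(\Sigma)\subseteq\rad(\Sigma)\subseteq i(A)$ for any $\xi\in\Der_{\rad}(\Sigma)$, so $\xi$ restricts to a derivation $\delta$ of $i(A)\cong A$; since $\varphi_A(\delta)$ and $\xi$ agree on $\rad(\Sigma)$, their difference lies in $K_\Sigma$. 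Without making $A_0$ and $i$ explicit it is not clear in your write-up what ``lifting'' means or why it is well-defined, and your ``common quotient $Q$'' formulation would still need this same Wedderburn splitting to be turned into an actual map. The ideas are all there, but this is the one point you should nail down rather than wave at.
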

	\begin{proof}
		Since a derivation $\delta\in \Der_{\rad}(A)$ maps $\rad (A)$ into itself, we get a well-defined map 
		$$
			\tilde \delta:\ \Sigma\longrightarrow \Sigma:\ \left[\begin{array}{cc} x+\rad (A) & r \\ 0& y+\rad (A) \end{array}\right]
			\mapsto 
			\left[\begin{array}{cc} \delta(x)+\rad (A) & \delta(r) \\ 0& \delta(y)+\rad (A) \end{array}\right]
		$$
		which one checks is a derivation. Moreover the assignment
\begin{equation}
\label{eq:assignment}
\varphi_A:\ \Der_{\rad}(A) \longrightarrow \Der_{\rad}(\Sigma): \delta \mapsto \tilde \delta
\end{equation}
		 is a homomorphism of Lie algebras. 
		A derivation $\delta$ lies in the kernel of $\varphi_A$ if and only if $\delta(\rad (A))=\{0\}$ and 
		$\delta(A) \subseteq \rad (A)$, i.e. if $\delta \in K_A$. 
		
		For the second part, choose a subalgebra
		$A_0 \cong A/\rad (A)$ of $A$ such that $A=A_0\oplus \rad (A)$. Then we get a corresponding embedding of algebras
		$$
		i: A=A_0\oplus \rad (A) \longrightarrow \Sigma: (x,r) \mapsto  	\left[\begin{array}{cc} x+\rad (A)& r \\ 0& x+\rad (A) \end{array}\right]
		$$ 
		and by Lemma \ref{lem:der-sigma} applied to $\Sigma$, an arbitrary derivation $\xi\in \Der_{\rad}(\Sigma)$ induces a derivation of $A$ with respect to this embedding, i.e. there is a commuting square
		\begin{equation}
		\begin{tikzcd}
		\Sigma \ar{r}{\xi} & \Sigma \\
		A \ar{u}{i} \ar[dashed]{r}{\delta} & A \ar{u}{i}
		\end{tikzcd}
		\end{equation}
		with $\delta \in \Der_{\rad}(A)$. Moreover, $\varphi_A(\delta)$ and $\xi$ agree on $\rad (\Sigma)$, so $\xi-\varphi_A(\delta) \in K_\Sigma$. This means that the composition of $\varphi_A$ with the map $\Der_{\rad}(\Sigma)\longrightarrow \Der_{\rad}(\Sigma)/K_\Sigma$ is surjective.
	\end{proof}

	\begin{cor}
	\label{cor:solv-rad}
		 Assume that $A$ is basic.
		 Then there is a surjective homomorphism of Lie algebras
		$$
			\varphi_A:\ \HHH^1_{\rad}(A) \twoheadrightarrow \HHH^1_{\rad}(\Sigma)
		$$
		with solvable kernel. In particular,
		 $\HHH^1_{\rad}(\Sigma)$ is solvable if and only if $\HHH^1_{\rad}(A)$ is solvable.
	\end{cor}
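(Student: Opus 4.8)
The plan is to promote the Lie algebra homomorphism $\varphi_A\colon\Der_{\rad}(A)\to\Der_{\rad}(\Sigma)$ of Proposition~\ref{prop:ders-rad} to a map $\HHH^1_{\rad}(A)\to\HHH^1_{\rad}(\Sigma)$, and then establish that it is surjective with solvable kernel; the stated equivalence is then a formal consequence. Throughout I would use that $\rad(\Sigma)^2=\{0\}$ (indeed $\rad(\Sigma)$ is strictly upper triangular in the above $2\times 2$ presentation) and that $\Sigma/\rad(\Sigma)\cong (A/\rad(A))\times (A/\rad(A))$ is commutative under our standing hypotheses, as is already used in the proof of Lemma~\ref{lem:der-sigma}.

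I would begin with three observations. First, $K_A$ and $K_\Sigma$ are abelian: if $\varphi,\psi\in K_A$ then $\varphi\circ\psi=0$, since $\psi(A)\subseteq\rad(A)$ and $\varphi(\rad(A))=\{0\}$, so $[\varphi,\psi]=0$ — the same mechanism as in Proposition~\ref{prop:def-pin}. Second, $\Sigma$ is solvable (indeed metabelian) as a Lie algebra under the commutator: its Lie ideal $\rad(\Sigma)$ is abelian since $\rad(\Sigma)^2=\{0\}$, and the quotient $\Sigma/\rad(\Sigma)$ is commutative; hence $\Inn(\Sigma)\cong\Sigma/Z(\Sigma)$ is solvable, and so is every subquotient of it. Third, $K_\Sigma\subseteq\Inn(\Sigma)$: by Lemma~\ref{lem:der-sigma} an element $\xi\in K_\Sigma$ has image in $\rad(\Sigma)$ and annihilates $\rad(\Sigma)$; restricting to a separable complement $\Sigma_0$ of $\rad(\Sigma)$, $\xi|_{\Sigma_0}$ is a derivation of $\Sigma_0$ valued in the $\Sigma_0$-bimodule $\rad(\Sigma)$, hence inner, so $\xi(s)=sm-ms$ for some $m\in\rad(\Sigma)$ and all $s\in\Sigma_0$; since $[\rad(\Sigma),m]\subseteq\rad(\Sigma)^2=\{0\}$, the inner derivation $x\mapsto xm-mx$ of $\Sigma$ also annihilates $\rad(\Sigma)$ and therefore agrees with $\xi$ on all of $\Sigma$.

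Next I would check the descent and surjectivity. A direct computation with $\delta=[a,-]$, using the formula for $\tilde\delta$ from Proposition~\ref{prop:ders-rad} together with $\rad(A)^2=\{0\}$, shows that $\varphi_A([a,-])$ is conjugation in $\Sigma$ by the ``diagonal'' element $\left[\begin{smallmatrix} a+\rad(A) & 0 \\ 0 & a+\rad(A)\end{smallmatrix}\right]$; thus $\varphi_A(\Inn(A))\subseteq\Inn(\Sigma)$, and $\varphi_A$ induces a Lie algebra homomorphism $\bar\varphi_A\colon\HHH^1_{\rad}(A)\to\HHH^1_{\rad}(\Sigma)$. For surjectivity, the surjection $\Der_{\rad}(A)\twoheadrightarrow\Der_{\rad}(\Sigma)/K_\Sigma$ of Proposition~\ref{prop:ders-rad} says $\varphi_A(\Der_{\rad}(A))+K_\Sigma=\Der_{\rad}(\Sigma)$, and by the third observation this forces $\varphi_A(\Der_{\rad}(A))+\Inn(\Sigma)=\Der_{\rad}(\Sigma)$, i.e. $\bar\varphi_A$ is onto.

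Finally, $\ker(\bar\varphi_A)=\varphi_A^{-1}(\Inn(\Sigma))/\Inn(A)$. By the injectivity half of Proposition~\ref{prop:ders-rad}, $\ker(\varphi_A)=K_A$, so $\varphi_A$ carries $L:=\varphi_A^{-1}(\Inn(\Sigma))$ onto $\im(\varphi_A)\cap\Inn(\Sigma)$ with kernel $K_A$; as $\Inn(A)\subseteq L$ and $\varphi_A(\Inn(A))$ is a Lie ideal of $\im(\varphi_A)\cap\Inn(\Sigma)$ (because $\Inn(A)$ is a Lie ideal of $\Der_{\rad}(A)$), one obtains a short exact sequence of Lie algebras
\begin{equation}
0\longrightarrow K_A/(K_A\cap\Inn(A))\longrightarrow \ker(\bar\varphi_A)\longrightarrow \big(\im(\varphi_A)\cap\Inn(\Sigma)\big)/\varphi_A(\Inn(A))\longrightarrow 0 .
\end{equation}
The left term is abelian by the first observation and the right term is a subquotient of $\Inn(\Sigma)$, hence solvable by the second, so $\ker(\bar\varphi_A)$ is solvable. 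Then $\HHH^1_{\rad}(A)$ is an extension of $\HHH^1_{\rad}(\Sigma)$ by the solvable ideal $\ker(\bar\varphi_A)$, whence $\HHH^1_{\rad}(A)$ is solvable if and only if $\HHH^1_{\rad}(\Sigma)$ is. I expect the main work to lie in the structural inputs of the second and third observations — that $\Sigma$ is (meta)solvable as a Lie algebra and that $K_\Sigma$ consists of inner derivations — both of which hinge on $\rad(\Sigma)^2=\{0\}$ and the separability/commutativity of $\Sigma/\rad(\Sigma)$; once these are in place, the descent and the exact-sequence bookkeeping are routine.
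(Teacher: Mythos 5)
Your proof is correct and follows essentially the same route as the paper: use Proposition~\ref{prop:ders-rad}, observe that $\varphi_A$ carries inner derivations to inner derivations, show $K_\Sigma\subseteq\Inn(\Sigma)$ to get surjectivity on $\HHH^1_{\rad}$, and use solvability of $\Inn(\Sigma)$ together with control over $K_A$ to conclude the kernel is solvable. The only cosmetic differences are that you establish $K_\Sigma\subseteq\Inn(\Sigma)$ directly via separability of $\Sigma_0$ (the paper cites Lemma~\ref{remark idempotents map to zero}, which implicitly relies on the same ingredient) and that you use $K_A$ being abelian where the paper uses the stronger fact $K_A\subseteq\Inn(A)$; both suffice.
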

	\begin{proof}
		One checks that the map \eqref{eq:assignment} maps $\Inn(A)$ into $\Inn(\Sigma)$ (as $[a,-]$ gets mapped to $[{\rm diag}(a,a),-]$), and hence induces a map on $\HHH^1_{\rad}$.
		The derivations in $K_\Sigma$ are inner by Lemma~\ref{remark idempotents map to zero}, hence the first statement follows from \eqref{eq:surj} in Proposition~\ref{prop:ders-rad}.
		
		Since $A$ is basic, so is $\Sigma$. Hence the inner derivations on $\Sigma$ are solvable, and since the derivations in $K_A$ are inner (again by Lemma~\ref{remark idempotents map to zero}) it follows from \eqref{eq:inj} in Proposition~\ref{prop:ders-rad} that the kernel of $\varphi_A$ is solvable.
	\end{proof}
	
\begin{cor}
\label{cor:criterionsolv}
Let $A$ be a finite-dimensional $k$-algebra, and denote by $\Sigma$ the hereditary algebra corresponding to $A/\rad (A)^2$.
 If $\HHH^1_{\rad}(\Sigma)$ is solvable, then so is $\HHH^1_{\rad}(A)$.
\end{cor}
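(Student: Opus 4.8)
The plan is to deduce the corollary by concatenating two reduction steps already proved, namely Corollary~\ref{cor:iteration} (which replaces $A$ by the radical square zero algebra $A':=A/\rad(A)^2$) and Corollary~\ref{cor:solv-rad} (which, for a \emph{basic} radical square zero algebra, relates $\HHH^1_{\rad}$ of that algebra to $\HHH^1_{\rad}$ of its associated hereditary algebra). Since Corollary~\ref{cor:solv-rad} is stated only for basic algebras while here $A$ is arbitrary, the one thing to take care of is a Morita reduction, using Proposition~\ref{prop:morita}.

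Concretely, I would argue as follows. By Corollary~\ref{cor:iteration} it suffices to show $\HHH^1_{\rad}(A')$ is solvable, where $A'=A/\rad(A)^2$ satisfies $\rad(A')^2=0$ and $A'/\rad(A')=A/\rad(A)$ is separable over $k$. Pick an idempotent $f\in A'$ with $A'fA'=A'$ and $\bar A:=fA'f$ basic; then $\rad(\bar A)=f\rad(A')f$, so $\rad(\bar A)^2\subseteq f\rad(A')^2f=0$, and $\bar A/\rad(\bar A)\cong f(A'/\rad A')f$ is separable. By Proposition~\ref{prop:morita}, $\HHH^1_{\rad}(A')\cong\HHH^1_{\rad}(\bar A)$, so it is enough to prove solvability of $\HHH^1_{\rad}(\bar A)$. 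Since $\bar A$ is basic with $\rad(\bar A)^2=0$, Corollary~\ref{cor:solv-rad} applies and reduces this to solvability of $\HHH^1_{\rad}(\bar\Sigma)$, where $\bar\Sigma=\left(\begin{smallmatrix}\bar A/\rad\bar A & \rad\bar A\\ 0 & \bar A/\rad\bar A\end{smallmatrix}\right)$.

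It remains to match $\bar\Sigma$ with the hereditary algebra $\Sigma=\left(\begin{smallmatrix}A/\rad A & \rad(A)/\rad(A)^2\\ 0 & A/\rad A\end{smallmatrix}\right)$ attached to $A'$ in the statement. For this I would use the idempotent $e:=\mathrm{diag}(f,f)\in\Sigma$: clearly $e\Sigma e=\bar\Sigma$, and a short computation with the block structure shows $\Sigma e\Sigma=\Sigma$ (writing $S:=A/\rad A$ and $M:=\rad(A)/\rad(A)^2$, one uses $SfS=S$ together with the fact that $M$ is a unital $S$-bimodule to get $SfM+MfS=M$). Hence $e$ is a full idempotent, $\bar\Sigma=e\Sigma e$ is Morita equivalent to $\Sigma$, and Proposition~\ref{prop:morita} gives $\HHH^1_{\rad}(\bar\Sigma)\cong\HHH^1_{\rad}(\Sigma)$. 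Chaining everything, solvability of $\HHH^1_{\rad}(\Sigma)$ forces solvability of $\HHH^1_{\rad}(\bar\Sigma)$, hence of $\HHH^1_{\rad}(\bar A)$, hence of $\HHH^1_{\rad}(A')$, hence of $\HHH^1_{\rad}(A)$.

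I expect the only (mild) obstacle to be this last matching step, i.e. checking that $\mathrm{diag}(f,f)$ is a full idempotent in $\Sigma$; everything else is a formal chain of results already in hand. In fact, if one reads ``the hereditary algebra corresponding to $A/\rad(A)^2$'' as the $\Sigma$ built from a basic algebra Morita equivalent to $A$ in the first place, the Morita bookkeeping collapses and the statement is immediate from Corollaries~\ref{cor:iteration} and~\ref{cor:solv-rad}.
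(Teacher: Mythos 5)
Your proof is correct and takes the same route as the paper, which simply cites Corollary~\ref{cor:solv-rad}, Corollary~\ref{cor:iteration}, and Proposition~\ref{prop:morita} in one line; you have correctly identified the only subtle point glossed over there, namely that the $\Sigma$-construction applied to $A/\rad(A)^2$ and to a basic Morita representative $\bar A$ give Morita equivalent hereditary algebras, and your verification via the full idempotent $\mathrm{diag}(f,f)$ (including the computation $SfS=S$, $SfM+MfS=M$) is accurate.
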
	
\begin{proof}
This now follows by combining Corollary \ref{cor:solv-rad} with Corollary \ref{cor:iteration}  and Proposition~\ref{prop:morita}.
\end{proof}

Note that at this point, we have established all ingredients for diagram~\eqref{diagram big picture}.

\section{Hochschild cohomology of non-wild algebras}\label{sec:hoco non-wild}

From now on we will assume $k=\overline{k}$ is an algebraically closed field. For a quiver $Q$, we will denote by $\overline{Q}$ its underlying graph. For a finite-dimensional algebra $A=kQ/I$, we can consider $A/\rad(A)^2$ and the corresponding hereditary algebra $\Sigma$ (defined in Section \ref{sec:rad-sq-zero}). Then the basic algebra Morita equivalent to $\Sigma$ can be identified with $kQ^s$, where $Q^s$ is the separated quiver of $A$ (see \cite[X.2]{auslander1997representation}). The arrows of $Q$ and $Q^s$ correspond canonically, and for an arrow $a$ in $Q$ we write $a^s$ for the corresponding arrow in $Q^s$. 

\begin{prop}
\label{prop:rad-square}
Assume $A$ is a finite-dimensional $k$-algebra with $\rad(A)^2=\{0\}$, and corresponding separated quiver $Q^s$. Then:
\begin{enumerate}
\item \label{eq:finite} $A$ is of finite representation type if and only if $\overline{Q^s}$ is a disjoint union of Dynkin quivers.
\item \label{eq:tame} $A$ is of tame representation type if and only if $\overline{Q^s}$ is a disjoint union of Dynkin quivers and at least one Euclidean quiver.
\end{enumerate} 
\end{prop}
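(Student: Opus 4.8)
The strategy is to push the statement through the reductions already assembled in \S\ref{sec:rad-sq-zero} and then appeal to the classification of representation-finite and tame \emph{hereditary} algebras. Since $\rad(A)^2=\{0\}$, we have the stable equivalence $F\colon\smodu(A)\to\smodu(\Sigma)$, and, as recalled at the start of this section, the basic algebra Morita equivalent to $\Sigma$ is the path algebra $kQ^s$ of the separated quiver; this algebra is hereditary and its underlying graph is $\overline{Q^s}$. Representation type (finite/tame/wild) is a Morita invariant, so $\Sigma$ and $kQ^s$ agree on it; granting that it is also invariant under the stable equivalence $F$, the algebras $A$, $\Sigma$ and $kQ^s$ all have the same representation type, and it suffices to verify \eqref{eq:finite} and \eqref{eq:tame} for the hereditary algebra $B:=kQ^s$.

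For a hereditary algebra this is precisely the classical trichotomy. By Gabriel's theorem, $B$ is of finite representation type if and only if every connected component of $\overline{Q^s}$ is a Dynkin diagram, which gives \eqref{eq:finite}. By the Nazarova--Donovan--Freislich classification of tame hereditary algebras, $B$ is of tame (and not finite) representation type if and only if every component of $\overline{Q^s}$ is a Dynkin or Euclidean diagram with at least one component Euclidean, all remaining connected graphs yielding a wild path algebra by Drozd's tame--wild dichotomy; this is \eqref{eq:tame}. (One could note that $Q^s$ is bipartite, so only bipartite Euclidean graphs, such as the $\tilde A_{2k-1}$ and the Kronecker graph $\tilde A_1$, actually occur, but this does not affect the equivalences.)

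The one step that requires care is the claim that $F$ transports the representation type correctly. For finite representation type this is standard and may be quoted directly from \cite{auslander1997representation}: $F$ matches up the non-projective indecomposables of $A$ and of $\Sigma$, and each of $A$ and $\Sigma$ has only finitely many indecomposable projectives (this uses $\rad(A)^2=\{0\}$, which also makes the projective, injective and syzygy modules entering the definition of $F$ explicit and semisimple-to-projective in nature). For the tame/wild dichotomy one should either invoke the corresponding invariance statement for this stable equivalence or argue by hand, using that these finitely many correction terms let one compare one-parameter families of indecomposables on the two sides and conclude that $A$ is tame precisely when $\Sigma$ is. I expect this bookkeeping around $F$ to be essentially the only obstacle; the remainder is a direct application of Gabriel's theorem and the tame hereditary classification to $kQ^s$.
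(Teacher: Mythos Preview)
Your approach is correct and matches the paper's proof exactly: reduce to $kQ^s$ via the stable equivalence and Morita invariance, then quote the hereditary classification. The one step you flagged as needing care---that stable equivalence preserves representation type, including the tame/wild distinction---is a theorem of Krause \cite{krause1997stable}, which the paper simply cites rather than arguing by hand.
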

\begin{proof}
Since stable equivalence preserves representation type (see for example~\cite{krause1997stable}), $A$ is of finite (respectively tame) representation type if and only if $kQ^s$ is. Since $kQ^s$ is hereditary, it is of finite representation type if and only if the underlying graph of $Q^s$ is a disjoint union of Dynkin quivers~\cite[VII.5.10]{assem2006elements}, and of tame representation type if and only if the underlying graph of $Q^s$ is a disjoint union of Dynkin and Euclidean quivers~\cite[XIX.3.15]{simson2006elements}.
\end{proof}

We will now refine Corollary \ref{cor:criterionsolv} for non-wild algebras.

\begin{lem}
\label{lem:acyclic}
Let $Q$ denote an acyclic and connected quiver. Then $\HHH^1(kQ)=\HHH^1_{\rad}(kQ)=0$ if and only if $\overline{Q}$ is a tree.
\end{lem}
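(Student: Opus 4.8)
The plan is to compute $\HHH^1(kQ)$ for $Q$ acyclic and connected using the standard combinatorial description of the first Hochschild cohomology of a path algebra. Recall that for a hereditary algebra $kQ$ with $Q$ acyclic, one has $\HHH^1(kQ) = \HHH^1_{\rad}(kQ)$ (this is covered by Lemma~\ref{lem:acyclic}'s hypotheses via the hereditary case of Lemma~\ref{lem:der-sigma}, or directly since arrows go between distinct vertices so every derivation mapping idempotents to zero preserves the radical). So it suffices to compute $\HHH^1(kQ)$. The dimension of $\HHH^1(kQ)$ for an acyclic quiver is well known to be $1 - |Q_0| + \sum_{a \in Q_1} \dim_k e_{t(a)} (kQ) e_{s(a)}$, i.e.\ the Euler characteristic deficit measured by arrows versus vertices, refined by path counts. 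More precisely, using the minimal projective bimodule resolution of $kQ$ (the rows $0 \to \bigoplus_{a} (kQ)e_{t(a)} \otimes e_{s(a)}(kQ) \to \bigoplus_{i} (kQ)e_i \otimes e_i(kQ) \to kQ \to 0$), one gets $\HHH^1(kQ) = \coker\big( \prod_i e_i(kQ)e_i \to \prod_{a} e_{s(a)}(kQ)e_{t(a)} \big)$, where since $Q$ is acyclic $e_i(kQ)e_i = k$ for each vertex, so this is $\coker\big( k^{Q_0} \to \prod_{a} e_{s(a)}(kQ)e_{t(a)} \big)$.

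The key observation is then a dimension count: $\dim_k \HHH^1(kQ) = \big(\sum_{a \in Q_1} \dim_k e_{s(a)}(kQ)e_{t(a)}\big) - |Q_0| + \dim_k(\text{kernel})$, and the kernel of $k^{Q_0} \to \prod_a e_{s(a)}(kQ)e_{t(a)}$ consists of those $(\lambda_i)_i$ with $\lambda_{s(a)} = \lambda_{t(a)}$ for all $a$, which since $Q$ is connected is one-dimensional (spanned by $(1,\ldots,1)$). Hence
$$\dim_k \HHH^1(kQ) = \sum_{a \in Q_1} \dim_k e_{s(a)}(kQ)e_{t(a)} - |Q_0| + 1.$$
Now $\sum_{a} \dim_k e_{s(a)}(kQ)e_{t(a)} \ge |Q_1|$, with equality precisely when for every arrow $a\colon i \to j$ there is no path from $i$ to $j$ other than $a$ itself — i.e.\ precisely when $Q$ has no ``parallel routes''. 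For a connected acyclic quiver, $|Q_1| - |Q_0| + 1 = 0$ if and only if $\overline{Q}$ is a tree, and $|Q_1| - |Q_0| + 1 \ge 1$ otherwise. If $\overline{Q}$ is a tree, then between any two vertices there is at most one path in $\overline{Q}$, hence (as $Q$ is acyclic) at most one directed path in $Q$, so the inequality $\sum_a \dim e_{s(a)}(kQ)e_{t(a)} \ge |Q_1|$ is an equality and we get $\HHH^1(kQ) = 0$. Conversely, if $\overline{Q}$ is not a tree, then $|Q_1| \ge |Q_0|$, so $\dim_k\HHH^1(kQ) \ge |Q_1| - |Q_0| + 1 \ge 1$, forcing $\HHH^1(kQ) \ne 0$.

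The main obstacle is making the ``parallel routes'' bookkeeping clean: one must be careful that $\sum_a \dim_k e_{s(a)}(kQ)e_{t(a)}$ double-counts contributions when several arrows share the same source and target, and that the tree condition really does force each $e_{s(a)}(kQ)e_{t(a)}$ to be spanned by $a$ alone. An alternative, perhaps cleaner route avoiding the bimodule resolution entirely: argue directly that a derivation $\delta$ with $\delta(e_i)=0$ is determined by its values $\delta(a) \in e_{s(a)}(kQ)e_{t(a)}$ on arrows, freely; the inner derivations $[a,-]$ with $a = \sum_i \lambda_i e_i$ act on an arrow $b\colon i\to j$ by $\delta(b) = (\lambda_i - \lambda_j) b$; so $\HHH^1(kQ)$ is zero iff the ``arrow-scaling'' derivations exhaust $\prod_a e_{s(a)}(kQ)e_{t(a)}$, which by the same connectedness/tree dichotomy happens iff $\overline{Q}$ is a tree and has no multiple arrows — and for a tree, acyclicity already rules out multiple arrows and longer parallel paths. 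I would write the proof in this second, resolution-free form, as it keeps everything elementary and self-contained.
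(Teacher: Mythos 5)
Your proposal is correct and is essentially the paper's own proof: the paper applies the Koszul bimodule resolution to obtain $\dim_k\HHH^1(kQ) = 1 - \#Q_0 + \sum_{a\in Q_1}\dim_k e_{s(a)}(kQ)e_{t(a)}$ and then concludes, and your first route reproduces this formula (your cokernel/kernel count is exactly the Euler-characteristic computation) while spelling out the combinatorial dichotomy (tree $\Rightarrow$ each $e_{s(a)}(kQ)e_{t(a)}=ka$ and $|Q_1|=|Q_0|-1$; not a tree $\Rightarrow |Q_1|\geq|Q_0|$) that the paper leaves implicit. Your preferred ``resolution-free'' variant computes the very same cokernel $\bigoplus_a e_{s(a)}(kQ)e_{t(a)}\big/\operatorname{im}\bigl(\bigoplus_i e_i(kQ)e_i\bigr)$ directly via derivations and inner derivations, so it is the same argument in elementary dress rather than a genuinely different route; one small note is that the equality $\HHH^1(kQ)=\HHH^1_{\rad}(kQ)$ is justified in the paper by Proposition~\ref{prop:der-rad} (not Lemma~\ref{lem:der-sigma}, which gives a different statement), though your parenthetical ``directly, since arrows go between distinct vertices'' is exactly the right reason.
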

\begin{proof}
Note that since $Q$ is acyclic, Proposition \ref{prop:der-rad} ensures that $\Der(kQ)=\Der_{\rad}(kQ)$ and hence $\HHH^1(kQ)=\HHH^1_{\rad}(kQ)$. Since $A=kQ$ is Koszul, the Koszul complex 
\begin{equation}
K(A): 0 \to A \otimes_{kQ_0} kQ_1 \otimes_{kQ_0} A \xrightarrow{\nu} A \otimes_{kQ_0} A \xrightarrow{\mu} A \to 0
\end{equation}
is a free $A^e$-resolution of $A$, where $\mu$ denotes the multiplication map and
\begin{equation}
\nu(1 \otimes a \otimes 1)=a \otimes 1 - 1 \otimes a,
\end{equation}
for $a \in Q_1$. The dimension of $\HHH^1(kQ)$ is the Euler characteristic of the complex obtained by applying $\Hom_{A^e}(-,A)$ to $K(A)$, which can be seen to be
\begin{equation}
\dim_k(\HHH^1(kQ))=1-\#Q_0 +\sum_{a \in Q_1} \dim_k(e_{s(a)}(kQ)e_{t(a)}),
\end{equation}
so $\HHH^1(kQ)=0$ if and only if $\overline{Q}$ is a tree.
\end{proof}

\begin{prop}
\label{prop:solv-dynkin}
Let $Q$ denote an acyclic quiver such that its underlying graph $\overline{Q}$ is a disjoint union of Dynkin and Euclidean quivers. 
\begin{enumerate}
\item \label{eq:char-2}If $\cha(k)=2$, then $\HHH^1(kQ)=\HHH^1_{\rad}(kQ)$ is solvable.
\item \label{eq:char-not-2}If $\cha(k) \neq 2$, then $\HHH^1(kQ)=\HHH^1_{\rad}(kQ)$ is solvable if and only if $\overline{Q}$ does not contain an $\widetilde{A}_1$.
\end{enumerate} 
\end{prop}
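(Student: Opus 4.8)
The plan is to reduce to the case of a connected quiver, dispose of the cases where $\overline{Q}$ is a tree using Lemma~\ref{lem:acyclic}, handle the Euclidean types $\widetilde{A}_n$ with $n\geq 2$ by a dimension count, and treat the Kronecker quiver $\widetilde{A}_1$ by an explicit identification of the Lie algebra. To begin, since $kQ=\prod_i kQ_i$ over the connected components $Q_i$ of $Q$, both $\HHH^1$ and $\HHH^1_{\rad}$ decompose as the corresponding direct sum of Lie algebras, and a direct sum of Lie algebras is solvable if and only if each summand is; moreover $Q$ being acyclic has no loops, so the product in Proposition~\ref{prop:der-rad} is empty and $\HHH^1(kQ)=\HHH^1_{\rad}(kQ)$ (componentwise as well). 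Hence one may assume $Q$ is connected and $\overline{Q}$ is a single Dynkin or Euclidean diagram.

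\textbf{Trees.} If $\overline{Q}$ is a tree — this covers all Dynkin types and the Euclidean types $\widetilde{D}_n,\widetilde{E}_6,\widetilde{E}_7,\widetilde{E}_8$ — then $\HHH^1(kQ)=0$ by Lemma~\ref{lem:acyclic}, hence solvable, in any characteristic. What remains are the Euclidean types $\widetilde{A}_n$ with $n\geq 1$.

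\textbf{The case $\widetilde{A}_n$, $n\geq 2$.} Here $\overline{Q}$ is an $(n+1)$-cycle and $Q$ is an acyclic orientation of it. The plan is to show $\dim_k\HHH^1(kQ)\leq 2$, which forces solvability since every Lie algebra of dimension at most $2$ is solvable. The dimension formula of Lemma~\ref{lem:acyclic} gives $\dim_k\HHH^1(kQ)=1-\#Q_0+\sum_a\dim_k e_{s(a)}(kQ)e_{t(a)}$ with $\#Q_0=\#Q_1=n+1$. The key combinatorial observation is that in a cycle there are exactly two arcs between any two vertices, so a path in $kQ$ (which by acyclicity repeats no vertex) from $s(a)$ to $t(a)$ traverses one of these two arcs; thus $\dim_k e_{s(a)}(kQ)e_{t(a)}\leq 2$, with value $2$ exactly when the arc complementary to the edge $a$ is consistently oriented from $s(a)$ to $t(a)$. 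One then checks that this can happen for at most one arrow $a$ — it pins down the orientation of the remaining $n$ edges, after which the other arrows have "dead ends" blocking a second path — so $\sum_a\dim_k e_{s(a)}(kQ)e_{t(a)}\leq n+2$ and $\dim_k\HHH^1(kQ)\leq 2$.

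\textbf{The case $\widetilde{A}_1$ and conclusion.} Here $Q$ is the Kronecker quiver $K_2$, with vertices $1,2$ and two arrows $\alpha,\beta\colon 1\to 2$. By Lemma~\ref{remark idempotents map to zero} one computes $\HHH^1(kK_2)$ from derivations $\delta$ with $\delta(e_1)=\delta(e_2)=0$; such a $\delta$ is determined by $\delta(\alpha),\delta(\beta)\in e_1(kK_2)e_2=k\alpha\oplus k\beta$, so these derivations form a copy of $\mathfrak{gl}(k\alpha\oplus k\beta)\cong\mathfrak{gl}_2$ with bracket matching (up to sign) that of $\mathfrak{gl}_2$, while the inner ones among them are $[\lambda e_1+\mu e_2,-]$, acting on $e_1(kK_2)e_2$ by the scalar $\lambda-\mu$, i.e. the line of scalar matrices. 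Hence $\HHH^1(kK_2)\cong\mathfrak{gl}_2/k\cdot\mathrm{id}$. If $\cha(k)\neq 2$ then $\mathfrak{gl}_2=\mathfrak{sl}_2\oplus k\cdot\mathrm{id}$ as Lie algebras, so this quotient is $\mathfrak{sl}_2$, which is simple, hence non-solvable; if $\cha(k)=2$ then $\mathrm{id}\in\mathfrak{sl}_2$ and $[\mathfrak{sl}_2,\mathfrak{sl}_2]=k\cdot\mathrm{id}$, so $[\HHH^1(kK_2),\HHH^1(kK_2)]$ is abelian and $\HHH^1(kK_2)$ is solvable. Assembling the components: in characteristic $2$ every connected component contributes a solvable summand, giving (1); in characteristic $\neq 2$ the only component contributing a non-solvable summand is an $\widetilde{A}_1$, so $\HHH^1(kQ)=\HHH^1_{\rad}(kQ)$ is solvable if and only if $\overline{Q}$ has no $\widetilde{A}_1$, giving (2).

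\textbf{Main obstacle.} The routine part is the tree case. The two points needing genuine care are the combinatorial bound $\dim_k\HHH^1(kQ)\leq 2$ for an acyclically oriented cycle (ruling out two arrows simultaneously admitting a "second path") and, above all, the explicit determination of the Lie algebra $\HHH^1(kK_2)$ together with its characteristic-$2$ degeneration, since this is precisely where the dichotomy in the statement originates.
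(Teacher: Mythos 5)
Your proof is correct, and it follows the same skeleton as the paper's: decompose into connected components, use Lemma~\ref{lem:acyclic} to kill the tree cases, and handle the cyclic types $\widetilde{A}_n$ separately. The difference is that where the paper simply \emph{asserts} the two facts ``$\HHH^1(kQ)\cong k$ is solvable for $\overline{Q}=\widetilde{A}_n$ with $n>1$'' and ``$\HHH^1(kQ)\cong \mathfrak{sl}_2$ for $\overline{Q}=\widetilde{A}_1$, solvable iff $\cha(k)=2$'', you actually prove them. Two comments on that, which are really to your credit.

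First, your combinatorial bound $\dim_k\HHH^1(kQ)\leq 2$ for an acyclically oriented $(n+1)$-cycle is the right statement, and it is in fact sharper than the paper's phrasing suggests: the paper's ``$\cong k$'' is not quite accurate. For a triangle oriented as a commutative square (say $1\to 2\to 3$ together with $1\to 3$), one computes $\dim e_1(kQ)e_3=2$ and so $\dim\HHH^1(kQ)=1-3+(1+1+2)=2$; in general, $\dim\HHH^1$ is $2$ precisely when the cycle has a unique source and sink joined by a single edge, and $1$ otherwise. This has no bearing on solvability (a Lie algebra of dimension $\leq 2$ is solvable, as you observe), but your $\leq 2$ is what the formula actually yields, and your uniqueness argument — once one arrow's complementary arc is consistently oriented, all other complementary arcs must reverse through an edge and therefore break — is the correct justification.

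Second, your computation $\HHH^1(kK_2)\cong\mathfrak{gl}_2/k\cdot\mathrm{id}=\mathfrak{pgl}_2$ is the precise statement, valid in all characteristics. The paper fixes ``an isomorphism $\HHH^1(kK_2)\cong\mathfrak{sl}_2$'' via the elements $H,E,F$, which is fine for $\cha(k)\neq 2$ (the only case the later sections use), but in characteristic $2$ the element $H$ becomes inner, and $\mathfrak{pgl}_2$ is not literally isomorphic to $\mathfrak{sl}_2$. Your derivation of solvability in characteristic $2$ from $[\mathfrak{gl}_2,\mathfrak{gl}_2]=\mathfrak{sl}_2$ and $[\mathfrak{sl}_2,\mathfrak{sl}_2]=k\cdot\mathrm{id}$ is exactly the right way to see it. The one small care point: as you note, the identification of idempotent-killing derivations with $\mathfrak{gl}_2$ is an anti-isomorphism (the bracket picks up a sign), but since every Lie algebra is isomorphic to its opposite this is harmless. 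Overall this is the same approach as the paper, carried out with the details that the paper leaves to the reader.
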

\begin{proof}
If $Q=\sqcup_{i=1}^n Q_i$, then $\HHH^1(kQ)\cong \bigoplus_{i=1}^n \HHH^1(kQ_i)$ since Hochschild cohomology commutes with direct sums. The only Dynkin or Euclidean quivers which are not trees are the $\widetilde{A_n}$,  so \eqref{eq:char-2} and \eqref{eq:char-not-2} follow from Lemma \ref{lem:acyclic}, from the fact that $\HHH^1(kQ)\cong k$ is solvable for $\overline{Q}=\widetilde{A_n}$ and $n>1$, and from the fact that for $\overline{Q}=\widetilde{A}_1$, $\HHH^1(kQ) \cong \mathfrak{sl}_2$ is solvable if and only if $\cha(k)=2$.
\end{proof}

\begin{thm}
\label{thm:main}
Assume $A$ is a finite-dimensional $k$-algebra of non-wild representation type.
\begin{enumerate}
\item \label{eq:main-1}If $\cha(k)=2$, then $\HHH^1_{\rad}(A)$ is solvable.
\item \label{eq:main-2}If $\cha(k) \neq 2$, $\HHH^1_{\rad}(A)$ is solvable if $\overline{Q^s}$ does not contain an $\widetilde{A}_1$.
\end{enumerate}
\end{thm}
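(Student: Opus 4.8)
The plan is to push the problem through the reduction machinery of \S\ref{sec:lie}--\S\ref{sec:hoco non-wild} until it becomes a statement about the hereditary algebra $kQ^s$, and then to quote Proposition~\ref{prop:solv-dynkin}. First I would invoke Corollary~\ref{cor:criterionsolv}: it is enough to show that $\HHH^1_{\rad}(\Sigma)$ is solvable, where $\Sigma$ is the hereditary algebra attached to $A/\rad(A)^2$. Since $kQ^s$ is the basic algebra Morita equivalent to $\Sigma$ and $\HHH^1_{\rad}$ is Morita invariant (Proposition~\ref{prop:morita}), this is the same as showing that $\HHH^1_{\rad}(kQ^s)$ is solvable, so the whole theorem reduces to bounding $\HHH^1_{\rad}(kQ^s)$.

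Next I would pin down the shape of $Q^s$. The algebra $A/\rad(A)^2$ is a quotient of $A$, and non-wild representation type is inherited by quotient algebras (every $A/\rad(A)^2$-module is in particular an $A$-module, so any representation embedding witnessing wildness of $A/\rad(A)^2$ would witness wildness of $A$; over the algebraically closed field $k$, Drozd's dichotomy then places $A/\rad(A)^2$ in the non-wild, i.e. finite or tame, class). Hence Proposition~\ref{prop:rad-square} applies to $A/\rad(A)^2$ and shows that $\overline{Q^s}$ is a disjoint union of Dynkin quivers together with, possibly, some Euclidean quivers. Moreover $Q^s$ is bipartite by construction and therefore acyclic, so the hypotheses of Proposition~\ref{prop:solv-dynkin} are met with $Q=Q^s$.

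Finally I would read off the conclusion from Proposition~\ref{prop:solv-dynkin}: part~\eqref{eq:char-2} gives that $\HHH^1_{\rad}(kQ^s)$ is solvable when $\cha(k)=2$, and part~\eqref{eq:char-not-2} gives the same conclusion when $\cha(k)\neq 2$ provided $\overline{Q^s}$ contains no $\widetilde{A}_1$; feeding this back through the reduction of the first paragraph yields~\eqref{eq:main-1} and~\eqref{eq:main-2} respectively.

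I do not expect a genuine obstacle here: everything is bookkeeping with the homomorphisms established earlier, and the only external ingredient is the standard fact that quotient algebras of non-wild algebras are non-wild. The one point worth flagging is that in characteristic $\neq 2$ only a sufficient condition is being asserted --- an $\widetilde{A}_1$ inside $\overline{Q^s}$ need not survive as an $\mathfrak{sl}_2$-summand of $\HHH^1_{\rad}(A)$ once the relations of $A$ are taken into account --- and sharpening this to an exact count of $\mathfrak{sl}_2$-summands is precisely what the later Theorem~\ref{thm:mainmain} accomplishes.
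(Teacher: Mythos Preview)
Your proposal is correct and follows essentially the same route as the paper: the paper's proof is simply ``combine Corollary~\ref{cor:criterionsolv}, Proposition~\ref{prop:rad-square}, and Proposition~\ref{prop:solv-dynkin}'', and you have spelled out the bookkeeping (Morita invariance to pass from $\Sigma$ to $kQ^s$, non-wildness of $A/\rad(A)^2$, and acyclicity of $Q^s$) that makes those citations fit together.
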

\begin{proof}
This now follows by combining Corollary \ref{cor:criterionsolv}, Proposition \ref{prop:rad-square}, and Proposition \ref{prop:solv-dynkin}.
\end{proof}

The condition in \eqref{eq:main-2} is not sufficient, as the following example illustrates.

\begin{ex}
Let $A=kQ/I$, where 
\begin{equation}
Q:
\begin{tikzcd}
1 \ar[shift left]{r}{a} \ar[shift right,swap]{r}{b} & 2 \ar{r}{c} & 3
\end{tikzcd}
\end{equation}
and $I=(ac)$. Then $A$ is special biserial, and hence tame. One computes that $\dim_k(\HHH^1(A))=2$, and hence $\HHH^1(A)$ is solvable. However, $\overline{Q^s}=\widetilde{A}_1 \sqcup A_2$.
\end{ex}

We will remedy this in the next section; for now we deduce the following two corollaries.

\begin{cor}
\label{cor:fin-rep-type}
Assume $A$ is a finite-dimensional $k$-algebra of finite representation type, then $\HHH^1_{\rad}(A)$ is solvable.
\end{cor}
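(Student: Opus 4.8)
The plan is to reduce the statement to Theorem~\ref{thm:main} by showing that the separated quiver $Q^s$ of an algebra of finite representation type cannot contain a subquiver whose underlying graph is $\widetilde{A}_1$, i.e.\ a Kronecker subquiver. First I would invoke Corollary~\ref{cor:criterionsolv}: it suffices to prove that $\HHH^1_{\rad}(\Sigma)$ is solvable, where $\Sigma$ is the hereditary algebra attached to $A/\rad(A)^2$. By Corollary~\ref{cor:solv-rad} (or directly) this reduces to the radical-square-zero algebra $A/\rad(A)^2$, which has the same separated quiver $Q^s$ as $A$; and by Proposition~\ref{prop:solv-dynkin}\eqref{eq:char-2}--\eqref{eq:char-not-2} together with Theorem~\ref{thm:main}, solvability of $\HHH^1_{\rad}$ for a non-wild algebra is guaranteed as soon as $\overline{Q^s}$ contains no $\widetilde{A}_1$.

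The crux is therefore the following observation about representation type: if $A$ is of \emph{finite} representation type, then so is $A/\rad(A)^2$ (a quotient of a representation-finite algebra, or one can argue via the stable equivalence $F:\smodu(A)\to\smodu(\Sigma)$ recalled in \S\ref{sec:rad-sq-zero} and the fact that stable equivalence preserves representation type), and hence by Proposition~\ref{prop:rad-square}\eqref{eq:finite} the underlying graph $\overline{Q^s}$ is a disjoint union of Dynkin quivers. Since $\widetilde{A}_1$ is Euclidean, not Dynkin, it does not occur as a connected component, and — crucially — since $\widetilde{A}_1$ is not a proper subgraph of any Dynkin diagram either (a Kronecker quiver has a double edge, which no Dynkin diagram contains), it cannot occur as a subquiver at all. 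Thus the hypothesis of Theorem~\ref{thm:main}\eqref{eq:main-2} is met when $\cha(k)\neq 2$, and Theorem~\ref{thm:main}\eqref{eq:main-1} covers $\cha(k)=2$ unconditionally. In either case $\HHH^1_{\rad}(A)$ is solvable.

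I would write the proof as essentially a two-line deduction: ``If $\cha(k)=2$ apply Theorem~\ref{thm:main}\eqref{eq:main-1}. If $\cha(k)\neq 2$, note that $A/\rad(A)^2$ is again of finite representation type, so by Proposition~\ref{prop:rad-square}\eqref{eq:finite} the graph $\overline{Q^s}$ is a disjoint union of Dynkin quivers and in particular contains no $\widetilde{A}_1$; now apply Theorem~\ref{thm:main}\eqref{eq:main-2}.'' The only point that deserves a word of justification is the passage to $A/\rad(A)^2$ having finite representation type; everything else is a direct citation. The main (and only) obstacle is thus purely bookkeeping — making sure the reduction to radical-square-zero is legitimately invoked — rather than any genuine mathematical difficulty, since all the substantive work was already done in Theorem~\ref{thm:main} and Proposition~\ref{prop:rad-square}.
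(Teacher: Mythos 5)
Your proof is correct and follows exactly the same route as the paper's, which is the one-line observation that Theorem~\ref{thm:main} applies because $\widetilde{A}_1$ is not a Dynkin quiver. The only difference is that you explicitly flag the step that $A/\rad(A)^2$ inherits finite representation type from $A$ (so that Proposition~\ref{prop:rad-square}\eqref{eq:finite} forces $\overline{Q^s}$ to be a union of Dynkin graphs); the paper leaves this implicit, since it is already buried in the way Theorem~\ref{thm:main} is assembled from Corollary~\ref{cor:criterionsolv}, Proposition~\ref{prop:rad-square} and Proposition~\ref{prop:solv-dynkin}.
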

\begin{proof}
This follows from Theorem \ref{thm:main} since $\widetilde{A}_1$ is not a Dynkin quiver. 
\end{proof}

\begin{cor}\label{cor:char2}
	If $\cha(k)=2$ and $A$ is an indecomposable non-local finite-dimensional $k$-algebra of non-wild representation type, then $\HHH^1(A)$ is solvable. 
\end{cor}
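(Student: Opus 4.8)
The plan is to combine the solvability of $\HHH^1_{\rad}(A)$ provided by Theorem~\ref{thm:main}\eqref{eq:main-1} with the morphism of Lie algebras $\HHH^1(A)\to\HHH^1(A/J)$, where $J=\sum_{i\neq j}Ae_iAe_jA$, whose kernel is contained in $\HHH^1_{\rad}(A)$ (cf.\ the remarks following Proposition~\ref{prop:der-rad}). Since both $\HHH^1$ and $\HHH^1_{\rad}$ are Morita invariant (Proposition~\ref{prop:morita}), I would first pass to a basic $A=kQ/I$, so that $Q$ is connected (as $A$ is indecomposable) with at least two vertices (as $A$ is non-local). The quiver of $A/J$ is a disjoint union of bouquets of loops, one at each vertex, so $A/J\cong\bigoplus_i B_i$ with each $B_i$ local, and since the $B_i$ are commutative we get $\HHH^1(A/J)\cong\bigoplus_i\Der(B_i)$.

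The key geometric point is that every $B_i$ is generated by a single loop. Since $Q$ is connected with at least two vertices, each vertex $i$ is the source or target of some non-loop arrow; if in addition $i$ carried two loops, then in the separated quiver $Q^s$ of $A/\rad(A)^2$ the two copies $i',i''$ of $i$ would be joined by a double edge, while the non-loop arrow would join $i'$ or $i''$ to a copy of some other vertex. Thus $\overline{Q^s}$ would have a connected component properly containing $\widetilde{A}_1$, hence neither Dynkin nor Euclidean, so $A/\rad(A)^2$ would be wild by Proposition~\ref{prop:rad-square}; but then $A$ itself would be wild, since restriction of scalars along $A\twoheadrightarrow A/\rad(A)^2$ is a fully faithful exact embedding of module categories and so carries any representation embedding witnessing wildness of $A/\rad(A)^2$ to one for $A$. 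This contradiction shows each $B_i$ is either $k$ or $k[x]/(x^{n_i})$.

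Finally I would check that $\Der(k[x]/(x^n))$ is solvable when $\cha(k)=2$. Here $\Der(k[x]/(x^n))=\{f\partial\mid f\in k[x]/(x^n),\ nx^{n-1}f\in(x^n)\}$ with bracket $[f\partial,g\partial]=(fg'+f'g)\partial=(fg)'\partial$, so every element of the derived subalgebra has the form $h\partial$ with $h$ a derivative, i.e.\ a polynomial with only even-degree terms; but the derivative of any such polynomial is $0$ in characteristic $2$, so the second derived subalgebra vanishes. Consequently $\bigoplus_i\Der(B_i)$ is solvable, hence so is its Lie subalgebra $\im\bigl(\HHH^1(A)\to\HHH^1(A/J)\bigr)$; the kernel of $\HHH^1(A)\to\HHH^1(A/J)$ is a Lie subalgebra of the solvable Lie algebra $\HHH^1_{\rad}(A)$ and hence solvable; and therefore $\HHH^1(A)$, being an extension of a solvable Lie algebra by a solvable one, is solvable.

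I expect the main obstacle to be the middle paragraph: showing that no vertex of a non-local indecomposable non-wild algebra carries two loops. This needs the separated-quiver criterion of Proposition~\ref{prop:rad-square} together with the (standard, but worth isolating) fact that representation-wildness is inherited by $A$ from its quotient $A/\rad(A)^2$. The characteristic-$2$ computation and the concluding extension argument are routine.
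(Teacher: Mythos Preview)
Your proposal is correct and follows essentially the same route as the paper's proof: reduce to solvability of $\HHH^1(A/J)$ via the kernel lying in the solvable $\HHH^1_{\rad}(A)$, use the separated-quiver criterion to bound the number of loops at each vertex by one, and then verify that $\Der(k[x]/(x^n))$ has derived length at most two in characteristic two. Your derivative-of-product formulation of the last step (valid since $-1=1$ in characteristic two) is the same computation as the paper's, which instead uses the basis $\delta_i(x)=x^i$ and the structure constants $[\delta_i,\delta_j]=(j-i)\delta_{i+j-1}$.
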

\begin{proof}
	We may assume without loss that $A=kQ/I$, and let $e_1,\ldots, e_n$ denote a full set of orthogonal primitive idempotents. Define $J=\sum_{i\neq j} A e_i A e_j A$. We know by Theorem~\ref{thm:main} that $\HHH^1_{\rad}(A)$ is solvable, and by considering diagram~\eqref{diagram big picture} it is clear that if $\HHH^1(A/J)$ is solvable, then so is 
	$\HHH^1(A)$.
	
	 Since $A$ is non-local, indecomposable and non-wild, every vertex in $Q$ has at most one loop attached to it. The reason is that if a vertex had at least two loops attached to it, then there would have to be at least one other arrow emanating from or terminating at that vertex (as $Q$ is connected and has more than one vertex by assumption). But then the separated quiver $Q^s$ is not a union of Dynkin and Euclidean quivers, a contradiction.  
	 
	 Hence the algebra $A/J$ is a direct product of algebras generated by a single element, that is, algebras of the form $k[x]/(x^n)$ for certain $n\in\NN$. Derivations of such an algebra are linear combinations of derivations $\delta_i$ defined by $\delta_i(x)=x^i$,  for $i\in \{1,\ldots, n-1\}$ if $n$ is odd and $i\in \{0,1,\ldots, n-1\}$ if $n$ is even. We have
	 $[\delta_i,\delta_j]=(j-i)\cdot \delta_{i+j-1}$, which implies that the commutator of two derivations is a linear combination of $\delta_i$'s with $i$ even. But if $i$ and $j$ are even, then $[\delta_i,\delta_j]=0$, which implies that the Lie algebra of derivations of $k[x]/(x^n)$ has derived length at most two.
\end{proof}

In particular, Corollary~\ref{cor:fin-rep-type} shows that in characteristic zero, the outer automorphism group of an algebra of finite representation type is solvable. As the following example shows, Corollary \ref{cor:fin-rep-type} is in some sense sharp.

\begin{ex}
For $A=k[x]/(x^n)$, and $p=\cha(k) \nmid n$, we have $\HHH^1_{\rad}(A)=\HHH^1(A)=\Der(A)=\langle \delta_1, \ldots, \delta_{n-1} \rangle$, where $\delta_i(x)=x^i$, and 
\begin{equation}
[\delta_i,\delta_{j}]=(j-i)\cdot \delta_{i+j-1},
\end{equation}
with the convention that $\delta_{i+j-1}=0$ for $i+j >n$. Hence $\HHH^1(A)$ is solvable but not nilpotent for $n>2$.
\end{ex}

\section{Algebras with Kronecker chains}

In this section we assume that $\cha(k)\neq 2$ (besides $k$ being algebraically closed). To obtain a more complete version of Theorem \ref{thm:main}, we will need to consider non-wild algebras with an $\widetilde{A}_1$ contained in $\overline{Q^s}$ separately. Our main result is Theorem \ref{thm:mainmain} below, which gives a complete characterisation of when the (radical preserving) first Hochschild cohomology of a non-wild algebra is solvable. To state this result, we need a few intermediate definitions.

\begin{defn}
\label{def:maxkron}
	Let $A=kQ/I$ be a finite-dimensional algebra.
	\begin{enumerate}
	\item We call a list  $C=((a_1,b_1),\ldots,(a_n,b_n))$ of pairs of arrows in $Q$ a 
	\emph{Kronecker chain} in $A$ if the following conditions are satisfied: 
	\begin{enumerate}
		\item $s(a_i)=s(b_i)$ and $t(a_i)=t(b_i)$ for all $i$.
		\item $t(a_i)=s(a_{i+1})$ for all $1\leq i < n$.
		\item\label{Kc:C} For all $1\leq i < n$  at least one of $a_ia_{i+1}, a_ib_{i+1}$, $b_ia_{i+1}, b_ib_{i+1}$
		is not in $I$.
		\item\label{Kc:D} All arrows $a_i,b_i$	involved in the chain are distinct.
	\end{enumerate}
	\item 
	We call $C$ a \emph{maximal Kronecker chain} if $C$ is not properly contained in another Kronecker chain. 
	\end{enumerate}
\end{defn}

It is easy to describe the shape of maximal Kronecker chains in non-wild finite-dimensional algebras.

\begin{lem} 
\label{rem:shape}
In a non-wild finite-dimensional algebra $A=kQ/I$, for a maximal Kronecker chain $C=((a_1,b_1),\ldots,(a_n,b_n))$, either
\begin{enumerate}
\item \label{eq:shape1}$n=1$ and $s(a_1)=t(a_1)$, or
\item \label{eq:shape2}$n>1, s(a_1)=t(a_n)$, and $s(a_i) \neq s(a_j)$, for $1\leq i \neq j \leq n$, or
\item \label{eq:shape3}$n >1, s(a_1)\neq t(a_n)$ and $s(a_i) \neq s(a_j)$ as well as $t(a_i) \neq t(a_j)$, for $1\leq i \neq j \leq n$.
\end{enumerate}
 In the first two cases the chain is a connected component of $Q$. In the third case there are no other arrows in $Q$ (other than the ones involved in $C$) whose source, respectively target, coincides with the source, respectively target, of any of the $a_i$.
\end{lem}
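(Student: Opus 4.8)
The plan is to exploit the classification of separated quivers of non-wild algebras from Proposition~\ref{prop:rad-square} together with the combinatorics of Kronecker chains. First I would recall that if $C=((a_1,b_1),\ldots,(a_n,b_n))$ is a maximal Kronecker chain, then the arrows $a_i,b_i$ give rise to double arrows $a_i^s, b_i^s$ between the corresponding vertices of the separated quiver $Q^s$, so that $C$ contributes a subquiver of $\overline{Q^s}$ looking like a chain of $n$ Kronecker quivers (an $\widetilde A_1$-chain). Since $A$ is non-wild, Proposition~\ref{prop:rad-square} forces $\overline{Q^s}$ to be a disjoint union of Dynkin quivers together with at most one Euclidean quiver; in particular it can contain at most one $\widetilde A_1$ and no vertex of $\overline{Q^s}$ can have degree exceeding what is allowed in Dynkin/Euclidean diagrams. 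I would use this to control the local structure of $Q$ at the vertices $s(a_i)$ and $t(a_i)$.

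The first main step is to show that no repeated vertex can occur among the $s(a_i)$ (and dually the $t(a_i)$) except in the ``cyclic'' case. Suppose $s(a_i)=s(a_j)$ for $i\neq j$; since the $a_i,b_i$ are distinct arrows by condition~\ref{Kc:D}, the separated quiver would then have a vertex incident to at least four arrows (two from each of the two Kronecker pairs meeting there), or more precisely would contain a subquiver that is neither Dynkin nor Euclidean — contradicting non-wildness. The only way a vertex of $Q$ can legitimately support two of the Kronecker pairs on the separated side is when the chain closes up into a cycle, i.e. $t(a_n)=s(a_1)$; in that case one checks that the associated subgraph of $\overline{Q^s}$ is exactly an $\widetilde A_m$ (a single even cycle of length $2n$), which is Euclidean and hence permitted, but forces $n>1$ unless it degenerates to the loop case $n=1$, $s(a_1)=t(a_1)$. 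This yields the trichotomy \ref{eq:shape1}--\ref{eq:shape3}.

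The second main step is the ``isolation'' statements. In cases~\ref{eq:shape1} and~\ref{eq:shape2} the chain already accounts for a full even cycle or a loop in $\overline{Q^s}$, which is a connected component of $\overline{Q^s}$ (since $\overline{Q^s}$ is a disjoint union and an $\widetilde A_m$ has no room for further edges without leaving the non-wild classification); any additional arrow of $Q$ attached to one of the chain vertices would add an edge to this component of $\overline{Q^s}$, again producing a forbidden subgraph. Hence the chain vertices are attached to no other arrows of $Q$, and maximality then promotes this to the statement that the chain is an entire connected component of $Q$. In case~\ref{eq:shape3} the separated subgraph coming from $C$ is an $\widetilde A_1$-chain with two ``free ends''; here an extra arrow of $Q$ with the same source as some $a_i$ (resp. same target as some $a_i$) would again overload a vertex of $\overline{Q^s}$ past Dynkin/Euclidean degree bounds, which is the content of the final sentence of the lemma. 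Degree-one vertices at the ends and maximality of $C$ prevent extending the chain itself.

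The step I expect to be the real obstacle is the careful bookkeeping in the first step: translating ``two Kronecker pairs share a vertex of $Q$'' into a precise subgraph of $\overline{Q^s}$ and checking in each configuration (same source, same target, source-meets-target, and the genuinely cyclic closure) which ones violate the non-wild classification and which one is the admissible $\widetilde A_m$. One must also handle the boundary phenomenon where condition~\ref{Kc:C} (some composite $a_ia_{i+1}$ etc.\ is \emph{not} in $I$) interacts with the separated-quiver picture — though since $\overline{Q^s}$ only sees the arrows and not the relations, this mostly enters through maximality rather than through the shape itself. Once the degree analysis of $\overline{Q^s}$ is pinned down, the isolation statements follow by the same forbidden-subgraph argument, so the entire proof reduces to a finite case-check against the list of Dynkin and Euclidean diagrams.
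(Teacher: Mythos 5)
Your overall strategy matches the paper's: the paper's proof is a one-liner that says if $C$ is not of the stated shape then $Q^s$ contains a non-Dynkin, non-Euclidean subquiver, contradicting Proposition~\ref{prop:rad-square}. The conclusions you reach are correct, but there is a genuine error in the middle of your argument, and it concerns the geometry of the separated quiver.

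You assert that a cyclic Kronecker chain ($t(a_n)=s(a_1)$) contributes ``a single even cycle of length $2n$'', i.e.\ an $\widetilde A_{2n-1}$, to $\overline{Q^s}$, and you use this to explain why the cyclic case is admissible and why in cases~\eqref{eq:shape1}--\eqref{eq:shape2} the chain exhausts a connected component. This is false. Recall that $Q^s$ has vertex set $Q_0\sqcup Q_0'$ and each arrow $a:i\to j$ of $Q$ gives an arrow $a^s:i\to j'$ with an \emph{unprimed} source and a \emph{primed} target. Thus $t(a_i)'$ and $s(a_{i+1})$ are distinct vertices of $Q^s$ even though $t(a_i)=s(a_{i+1})$ in $Q$, and the Kronecker chain never glues up in $Q^s$: whether or not $C$ is cyclic, it contributes a disjoint union of $n$ copies of $K_2$, i.e.\ $n$ disjoint $\widetilde A_1$'s. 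In the same vein, a loop in $Q$ does not give a loop in $Q^s$; it gives a single $K_2$ between $v$ and $v'$. Your phrase ``the only way a vertex of $Q$ can legitimately support two of the Kronecker pairs on the separated side is when the chain closes up'' is also off the mark: no vertex of $Q^s$ ever supports two Kronecker pairs, even in the cyclic case, precisely because $s(a_1)$ and $s(a_1)'=t(a_n)'$ are different vertices of $Q^s$.

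The correct (and cleaner) reason everything works is that $\widetilde A_1$ is already a maximal Euclidean diagram: you cannot attach any further edge or vertex to a doubled edge without leaving the Dynkin/Euclidean list. Hence for each $i$ the $K_2$ with arrows $a_i^s,b_i^s$ must be an \emph{entire} connected component of $Q^s$. This immediately yields all the assertions at once: $a_i,b_i$ are the only arrows of $Q$ with source $s(a_i)$ and the only arrows with target $t(a_i)$, which gives the last sentence of the lemma; $s(a_i)\neq s(a_j)$ and $t(a_i)\neq t(a_j)$ for $i\neq j$ (otherwise some $K_2$ in $Q^s$ acquires extra arrows); and finally, in cases~\eqref{eq:shape1} and~\eqref{eq:shape2} every vertex occurring in $C$ has all of its incoming \emph{and} outgoing arrows accounted for by $C$, so $C$ is a connected component of $Q$, while in case~\eqref{eq:shape3} the end vertices $s(a_1)$ and $t(a_n)$ are only constrained on one side. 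None of this requires the chain to assemble into a connected subgraph of $\overline{Q^s}$, and it never does. You should repair your proof by replacing the ``$\widetilde A_m$-cycle'' picture with the ``$n$ disjoint $\widetilde A_1$'s, each a full connected component'' picture.
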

\begin{proof}
If a maximal Kronecker chain in an algebra $A$ is not of the type described in the statement, then the separated quiver $Q^s$ of $A$ contains subquivers which are not Dynkin or Euclidean quivers. Since $A$ is tame or of finite representation type, this cannot happen by Proposition \ref{prop:rad-square}. The second statement is similar.
\end{proof}

A Kronecker chain is simply a chain of Kronecker quivers embedded in $Q$,  and Lemma \ref{rem:shape} says that for a non-wild algebra, the chain is either a double loop $L_2$, an $\widetilde A_n$ with doubled arrows or an $A_n$ with doubled arrows. Condition \eqref{Kc:C} may seem unmotivated at first glance. As it turns out, a maximal Kronecker chain can potentially contribute an $\mathfrak{sl}_2$ to the first Hochschild cohomology of $A$, and condition \eqref{Kc:C} is essential for correct counting of these non-solvable constituents. To single out the chains that actually do contribute, and formulate our main theorem, we need two more definitions.

\begin{defn}\label{def iso by base change}
	Let $kQ/I$ and $kQ/J$ be finite-dimensional algebras. We say that $kQ/I$ and $kQ/J$ are \emph{isomorphic by base change} if there is an automorphism of $kQ$ which fixes the standard idempotents, induces a linear map on the vector space spanned by all arrows, and maps $I$ to $J$.
\end{defn}

\begin{defn}
\label{def:standardrels}
A maximal Kronecker chain $C=((a_1,b_1),\ldots,(a_n,b_n))$ in $A=kQ/I$ has \textit{standard relations} if $A$ is isomorphic by base change to an algebra $kQ/J$  in which the following relations hold:
		\begin{enumerate}[label=(S\arabic*)]
			\item\label{cond:S1} For any arrow $c$ in $C$ and any arrow $d$ not in $C$ we have $cd=dc=0$.
			\item\label{cond:S2} $a_ia_{i+1}=0$, $b_ib_{i+1}=0$ and $a_ib_{i+1}+b_ia_{i+1}=0$ for all $1\leq i <n$.
			\item\label{cond:S3} $a_na_1=0$, $b_nb_1=0$ and $a_nb_1+b_na_1=0$ (non-trivial only if $s(a_1)=t(a_n)$).
		\end{enumerate}
\end{defn}

Strictly for the purposes of proper counting we need to declare all rotated versions of a cyclic Kronecker chain equivalent.
\begin{defn}
	If $C=((a_1,b_1),\ldots,(a_n,b_n))$ and $D=((a_2,b_2),\ldots,(a_n,b_n), (a_1,b_1))$ are both 
	maximal Kronecker chains in $A=kQ/I$, then we call them equivalent. Equivalence of arbitrary maximal Kronecker chains is defined as the transitive closure of this equivalence relation.
\end{defn}

\begin{thm}
\label{thm:mainmain}
Let $A=kQ/I$ be a non-wild finite-dimensional algebra. Then there is an isomorphism of Lie algebras
\begin{equation}
\HHH^1_{\rad}(A) \cong \mathfrak{sl}_2^{\oplus m} \oplus \mathfrak{r},
\end{equation}
with $\mathfrak{r}$ solvable, where $m$ is the number of equivalence classes of maximal Kronecker chains with standard relations in $A$.
\end{thm}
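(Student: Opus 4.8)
The plan is to reduce, via the machinery of Sections \ref{sec:lie}--\ref{sec:hoco non-wild}, to an explicit analysis of $\HHH^1_{\rad}(kQ^s)$ and then refine the count. By Corollary~\ref{cor:criterionsolv} and Corollary~\ref{cor:solv-rad}, $\HHH^1_{\rad}(A)$ differs from $\HHH^1_{\rad}(\Sigma) \cong \HHH^1_{\rad}(kQ^s)$ by a solvable kernel and, going the other way, the iteration of Corollary~\ref{cor:iteration} again only introduces abelian (hence solvable) kernels. Since a Lie algebra which is an extension of $\mathfrak{sl}_2^{\oplus m} \oplus \mathfrak r'$ by a solvable ideal, or is extended by a solvable ideal \emph{to} such an algebra, need \emph{not} itself split as $\mathfrak{sl}_2^{\oplus m}\oplus\mathfrak r$ in general, the first real task is to show that in \emph{this} situation the $\mathfrak{sl}_2$-summands survive: concretely, one uses that the $\mathfrak{sl}_2$-part is a Levi complement and that over a field of characteristic $\neq 2$ each $\mathfrak{sl}_2$ is its own derived subalgebra and has no nontrivial abelian quotients, so Weyl's theorem (the semisimple part lifts) combined with the fact that all the kernels appearing in the tower are solvable forces a splitting $\HHH^1_{\rad}(A)\cong\mathfrak s\oplus\mathfrak r$ with $\mathfrak s$ semisimple and $\mathfrak r$ solvable. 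Identifying $\mathfrak s$ with $\mathfrak{sl}_2^{\oplus m}$ (rather than some other semisimple Lie algebra) is possible because by Proposition~\ref{prop:rad-square} and Lemma~\ref{rem:shape} the only non-tree, non-wild constituents of $\overline{Q^s}$ are the $\widetilde A_n$, and $\HHH^1(k\widetilde A_n)$ is $k$ for $n>1$ and $\mathfrak{sl}_2$ for $n=1$; so $\mathfrak s$ is a sum of $\mathfrak{sl}_2$'s, one for each $\widetilde A_1$ in $\overline{Q^s}$, modulo a correction coming from the passage between $A$ and $A/\rad^2$.

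Second, one must show that the number of $\mathfrak{sl}_2$-summands equals the number of equivalence classes of maximal Kronecker chains with standard relations. An $\widetilde A_1$ in $\overline{Q^s}$ corresponds to a pair of parallel arrows $(a,b)$ in $Q$ whose separated images $a^s,b^s$ form a Kronecker component, i.e.\ such that $a,b$ are not killed against any longer path by the relations at the level of $\rad/\rad^2$; chaining these together at shared vertices produces exactly the maximal Kronecker chains of Definition~\ref{def:maxkron}, with condition \eqref{Kc:C} being precisely the condition that prevents the chain from breaking into shorter pieces (this is the ``correct counting'' point flagged before the definition). The content is then: (i) a maximal Kronecker chain that is \emph{not} isomorphic by base change to one with standard relations does \emph{not} contribute an $\mathfrak{sl}_2$ — here is where the automorphism-group heuristic from \S\ref{tamesec} is made rigorous via Lemma~\ref{lemma double kronecker} and Proposition~\ref{prop:local}, showing that if the relations on the chain are not the standard ones then the relevant derivations form a solvable (indeed often trivial) Lie algebra, tameness precluding the ``no relations'' alternative; and (ii) a chain \emph{with} standard relations contributes exactly one $\mathfrak{sl}_2$, no more — the ``distinct arrows'' condition \eqref{Kc:D} and the equivalence relation on cyclic rotations are exactly what is needed so that two different-looking chains giving the same $\mathfrak{sl}_2$ are identified, and so that a long chain is not overcounted by its sub-chains.

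I expect the main obstacle to be step (i): ruling out that a maximal Kronecker chain with \emph{non-standard} relations contributes an $\mathfrak{sl}_2$, uniformly over all admissible relation patterns compatible with non-wildness. This is the place where one genuinely needs the representation-theoretic input — one enumerates, using Proposition~\ref{prop:rad-square} applied to $eAe$ and quotients thereof, the possible shapes of relations on a chain of doubled arrows inside a tame or finite-type algebra, and for each one computes (or bounds) the Lie algebra of radical-preserving outer derivations, showing it is solvable unless the relations are, up to base change, the standard ones of Definition~\ref{def:standardrels}. The bookkeeping here, especially distinguishing the cyclic case $s(a_1)=t(a_n)$ (condition \ref{cond:S3}) from the linear case and handling the double-loop case $L_2$ of Lemma~\ref{rem:shape}\eqref{eq:shape1} separately, is where the proof is genuinely intricate; the reductions in Lemmas~\ref{lem:idempotent-hh} and \ref{remark idempotents map to zero} (restricting to $eAe$, normalising $\delta(e_i)=0$) are the tools that keep it finite. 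Once (i) is in hand, (ii) and the splitting are comparatively formal, and assembling the pieces through diagram~\eqref{diagram big picture} gives the stated isomorphism.
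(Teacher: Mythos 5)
Your overall strategy — reduce to $\HHH^1_{\rad}(kQ^s)$, show the solvable kernels in the tower do not destroy the $\mathfrak{sl}_2$'s, and count the surviving $\mathfrak{sl}_2$'s in terms of Kronecker chains with standard relations — is aligned with the paper's, and you correctly identify the bulk of the work as showing that a chain with non-standard relations contributes no $\mathfrak{sl}_2$ (this is the only-if direction of Theorem~\ref{thm:kronecker main}\eqref{statement-1}, whose proof occupies \S\ref{sec:local}--\S\ref{sec:s2s3glo} and is genuinely intricate).

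However, your argument for the \emph{splitting} $\HHH^1_{\rad}(A)\cong\mathfrak{sl}_2^{\oplus m}\oplus\mathfrak r$ has a real gap: you invoke ``Weyl's theorem'' and a ``Levi complement'' to lift the semisimple part through the tower of solvable extensions. Both Levi's theorem (existence of a semisimple complement to the radical) and Weyl's theorem (complete reducibility) are characteristic-zero results; they fail over fields of positive characteristic, and this theorem is stated and proved for all $\cha(k)\neq 2$. If one were to use that route, the conclusion would only be available in characteristic zero, which is strictly weaker than what is claimed. The paper sidesteps this entirely: Proposition~\ref{prop:trivial if} \emph{explicitly constructs}, for each maximal Kronecker chain with standard relations, a Lie subalgebra $\mathfrak g\subset\HHH^1_{\rad}(A)$ generated by three derivations $\delta_{(1,0,0)},\delta_{(0,1,0)},\delta_{(0,0,1)}$ that $\Delta_A(a_i,b_i)$ maps isomorphically onto $\mathfrak{sl}_2$. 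The direct-sum decomposition is then obtained by checking that these $\mathfrak g_i$ together with the (solvable) kernel of $\bigoplus_{i\in I_0}\Delta_A(a_i,b_i)$ exhaust $\HHH^1_{\rad}(A)$ — no abstract structure theory is needed, and the argument works uniformly in all characteristics $\neq 2$. You should replace the Levi/Weyl step with an explicit construction of this kind.

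A secondary point: the identification of $m$ is not quite ``one $\mathfrak{sl}_2$ per $\widetilde A_1$ in $\overline{Q^s}$ modulo a correction.'' Several $\widetilde A_1$-components of $Q^s$ can belong to a single Kronecker chain in $Q$, and for a \emph{surjective} chain Theorem~\ref{thm:kronecker main}\eqref{statement-2} shows all the $\Delta_A(a_i,b_i)$ along the chain are mutually conjugate, so they all project to the \emph{same} $\mathfrak{sl}_2$ summand in the image. This is why the count is by equivalence classes of chains, not by Kronecker subquivers, and it is where the conjugacy statement — which you do not mention — is essential. Your proposal gestures at the right lemmas (\ref{lemma double kronecker}, \ref{prop:local}) but the patching argument that propagates conjugacy of the $\Delta$'s along a chain (Corollary~\ref{cor: all delta surjective}, Proposition~\ref{prop: delta normalises relations}) is a distinct ingredient that you would need to supply.
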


\begin{cor}
Let $A=kQ/I$ be a non-wild finite-dimensional algebra. Then $\HHH^1_{\rad}(A)$ is solvable if and only if $A$ has no maximal Kronecker chains with standard relations.
\end{cor}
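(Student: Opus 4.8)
The corollary is an immediate consequence of Theorem~\ref{thm:mainmain}, so the work reduces to a quick logical unwinding. By Theorem~\ref{thm:mainmain}, for a non-wild finite-dimensional algebra $A=kQ/I$ there is an isomorphism of Lie algebras $\HHH^1_{\rad}(A)\cong \mathfrak{sl}_2^{\oplus m}\oplus \mathfrak{r}$ with $\mathfrak r$ solvable and $m$ the number of equivalence classes of maximal Kronecker chains with standard relations in $A$. So the plan is simply: (i) observe that $\HHH^1_{\rad}(A)$ is solvable if and only if $m=0$; (ii) rephrase $m=0$ as ``$A$ has no maximal Kronecker chains with standard relations''.

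For step (i), recall that a direct sum of Lie algebras is solvable if and only if each summand is solvable. If $m=0$ then $\HHH^1_{\rad}(A)\cong\mathfrak r$ is solvable by hypothesis. Conversely, if $m\geq 1$ then $\mathfrak{sl}_2$ is a direct summand, hence a quotient, of $\HHH^1_{\rad}(A)$; since $\mathfrak{sl}_2$ is simple non-abelian (using $\cha(k)\neq 2$, which is in force throughout this section) it is not solvable, and a Lie algebra with a non-solvable quotient is itself non-solvable. Therefore $\HHH^1_{\rad}(A)$ is solvable precisely when $m=0$.

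For step (ii), note that $m$, being defined as the number of equivalence classes of maximal Kronecker chains with standard relations, is zero exactly when there are no such chains at all (there are no nonempty equivalence classes when the underlying set is empty, and conversely any maximal Kronecker chain with standard relations lies in some equivalence class, forcing $m\geq 1$). Combining with step (i) yields: $\HHH^1_{\rad}(A)$ is solvable if and only if $A$ has no maximal Kronecker chains with standard relations, which is the assertion.

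\textbf{Main obstacle.} There is essentially no obstacle here: the entire content has been absorbed into Theorem~\ref{thm:mainmain}, and the corollary is a one-line formal deduction from the splitting $\HHH^1_{\rad}(A)\cong\mathfrak{sl}_2^{\oplus m}\oplus\mathfrak r$ together with the fact that $\mathfrak{sl}_2$ is not solvable in characteristic $\neq 2$. The only point worth a moment's care is the standing assumption $\cha(k)\neq 2$, without which $\mathfrak{sl}_2$ would itself be solvable (cf.\ Proposition~\ref{prop:solv-dynkin}) and the equivalence would fail; but this hypothesis is assumed throughout the section containing Theorem~\ref{thm:mainmain}.
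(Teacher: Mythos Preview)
Your proposal is correct and is exactly the intended deduction: the paper states this corollary immediately after Theorem~\ref{thm:mainmain} without proof, since it follows at once from the decomposition $\HHH^1_{\rad}(A)\cong \mathfrak{sl}_2^{\oplus m}\oplus\mathfrak r$ together with the non-solvability of $\mathfrak{sl}_2$ in characteristic $\neq 2$. Your remark about the standing hypothesis $\cha(k)\neq 2$ is also apt.
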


\subsection{Structure of the proof}

To prove Theorem \ref{thm:mainmain}, we need to relate Kronecker chains with standard relations to Hochschild cohomology. Assume that the separated quiver $Q^s$ of $A=kQ/I$ has a connected component which is a Kronecker quiver, i.e. 
	$$
	\begin{tikzcd}
		K_2 = i \ar[shift left]{r}{a^s}\ar[shift right, swap]{r}{b^s} & j  \subseteq Q^s
	\end{tikzcd}
	$$
	We denote the corresponding arrows in $Q$ by $a$ and $b$. 
	For definiteness, we will fix an isomorphism $\HHH^1(kK_2)\cong \HHH^1_{\rad}(kK_2)\cong \mathfrak{sl}_2$ once and for all as follows: 
\begin{align}
H&:=a^s \cdot \mathbf{1}_{a^s} - b^s \cdot \mathbf{1}_{b^s} \\
E&:=a^s \cdot \mathbf{1}_{b^s} \\
F&:= b^s \cdot \mathbf{1}_{a^s}
\end{align}
where  $\mathbf{1}_x$ is the indicator function sending $x$ to $1$ and every other basis element to $0$.

\begin{prop}\label{prop: Delta}
Assume $A=kQ/I$ is as above.
	\begin{enumerate}
		\item There is a morphism of Lie algebras 
		$$
		\Delta=\Delta_A(a,b):\HHH^1_{\rad}(A) \to \mathfrak{sl}_2,
		$$
		determined by the arrows $a,b\in A$.\footnote{$\Delta$ depends on the presentation of $A$, and we only  define it when $a$ and $b$ are arrows.}
		\item\label{item: formula delta} If, for some derivation $\delta \in \Der_{\rad}(A)$ mapping all standard idempotents to zero,
		\begin{equation}
		\Delta(\delta)=xH+ yE + zF
		\end{equation}
		then $$\delta(a)=(w+x) \cdot a + z\cdot b + r(a)$$ and $$\delta(b)=y \cdot a + (w-x)\cdot b + r(b)$$ for  $r(a),r(b) \in \rad^2(A)$ and some $w\in k$.
		Note that this implies that $\Delta_A(a,b)$ depends only on $A$ and the elements $a, b \in A$. 
		\item If $A$ is non-wild, there is a commutative diagram
		$$
			\begin{tikzcd}[column sep=large]
			\HHH^1_{\rad}(A) \ar{d}{\res} \ar{r}{\Delta_A(a,b)}& \mathfrak{sl}_2 \\
			\HHH^1_{\rad}(eAe) \ar[swap]{ur}{\Delta_{eAe}(a,b)}
			\end{tikzcd}
		$$
		 for any sum $e$ of standard idempotents containing the idempotents in $A$ corresponding to the vertices $i$ and $j$ in $Q^s$. 
	\end{enumerate}
\end{prop}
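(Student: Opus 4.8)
The plan is to build $\Delta_A(a,b)$ directly from the description of $\HHH^1_{\rad}(A)$ as derivations annihilating the standard idempotents modulo the inner derivations with the same property (Lemma~\ref{remark idempotents map to zero} and Lemma~\ref{lem:idempotent-hh}). Given such a $\delta$, since $s(a)=s(b)=i$ and $t(a)=t(b)=j$, the element $\delta(a)$ lies in $e_i A e_j$, which is spanned by $a$, $b$ and $\rad^2(A)\cap e_iAe_j$; so we can write $\delta(a) = \lambda_{aa} a + \lambda_{ab} b + r(a)$ and $\delta(b) = \lambda_{ba} a + \lambda_{bb} b + r(b)$ with $r(a),r(b)\in\rad^2(A)$. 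The plan is to set $\Delta(\delta) := xH + yE + zF$ with $x = \tfrac12(\lambda_{aa}-\lambda_{bb})$, $y=\lambda_{ba}$, $z=\lambda_{ab}$, i.e. we project the ``$2\times 2$ block'' $\begin{pmatrix}\lambda_{aa}&\lambda_{ba}\\ \lambda_{ab}&\lambda_{bb}\end{pmatrix}$ onto its traceless part (with $w=\tfrac12(\lambda_{aa}+\lambda_{bb})$), which is exactly the formula in part \eqref{item: formula delta}. First I would check this is well-defined on $\HHH^1_{\rad}(A)$: an inner derivation $[c,-]$ with $c\in\rad(A)$ (which we may assume by Lemma~\ref{remark idempotents map to zero}, adjusting so that it kills the idempotents) sends $a\mapsto ca - ac$, and since $c\in\rad(A)$ both $ca$ and $ac$ lie in $\rad^2(A)\cap e_iAe_j$; hence inner derivations contribute only to $r(a),r(b)$ and $w$, not to $x,y,z$. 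So $\Delta$ descends to $\HHH^1_{\rad}(A)$.

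Next I would verify that $\Delta$ is a morphism of Lie algebras. This is the one genuinely computational point, but it is forced: composing two derivations $\delta,\delta'$ and reading off the $a,b$-coefficients of $[\delta,\delta'](a)$ and $[\delta,\delta'](b)$ modulo $\rad^2(A)$ reproduces the commutator of the two $2\times2$ matrices modulo $\rad^2(A)$, because the $\rad^2$-terms $r(a),r(b)$ are swallowed again when one applies the other derivation (they land in $\rad^2$, and any derivation maps $\rad^2$ into $\rad^2$). Passing to traceless parts is a Lie algebra homomorphism $\mathfrak{gl}_2\to\mathfrak{sl}_2$ (the quotient by the centre), so $\Delta([\delta,\delta']) = [\Delta(\delta),\Delta(\delta')]$. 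Matching this against the fixed basis $H,E,F$ of $\mathfrak{sl}_2$ gives part (1) and completes part \eqref{item: formula delta}; the final sentence of \eqref{item: formula delta} — that $\Delta_A(a,b)$ depends only on $A$ and the elements $a,b\in A$ and not on further choices — follows since the coefficients $\lambda_{\bullet\bullet}$ are intrinsically the components of $\delta(a),\delta(b)$ in $e_iAe_j / (\rad^2(A)\cap e_iAe_j)$ along the images of $a$ and $b$.

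For part (3), let $e$ be a sum of standard idempotents including $e_i$ and $e_j$. By Lemma~\ref{lem:idempotent-hh} the restriction map $\res\colon\HHH^1_{\rad}(A)\to\HHH^1_{\rad}(eAe)$ is defined by $\delta\mapsto\delta|_{eAe}$, where we normalise $\delta(e)=0$ (compatibly with $\delta(e_l)=0$ for all $l$). Now I would observe two things. First, since $a,b\in e_iAe_j\subseteq eAe$, the arrows $a,b$ are still arrows of the quiver of $eAe$: Lemma~\ref{rem:shape} (applicable because $A$ is non-wild, so $eAe$ is non-wild and has no more than two parallel arrows between the relevant vertices) guarantees that passing to $eAe$ does not create a new path of length two between $i$ and $j$ that could absorb $a$ or $b$ into the radical square of $eAe$. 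Hence $\Delta_{eAe}(a,b)$ is defined. Second, $\rad^2(eAe)\subseteq\rad^2(A)$, so the $a,b$-components of $(\delta|_{eAe})(a)=\delta(a)$ and of $\delta(b)$ computed inside $eAe$ agree with those computed inside $A$; therefore $\Delta_{eAe}(a,b)(\res(\delta)) = \Delta_A(a,b)(\delta)$, which is precisely the asserted commutativity. The main obstacle is the non-wild input in part (3): one must be sure that $a$ and $b$ remain arrows in $eAe$ (rather than becoming elements of $\rad^2(eAe)$), and this is exactly where Lemma~\ref{rem:shape} and Proposition~\ref{prop:rad-square} are needed — without non-wildness the restriction could collapse the Kronecker subquiver and $\Delta_{eAe}(a,b)$ would not even be defined.
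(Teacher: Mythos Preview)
Your argument is correct and arrives at the same map $\Delta_A(a,b)$ as the paper, but the construction differs. The paper defines $\Delta_A(a,b)$ as the composition
\[
\HHH^1_{\rad}(A) \xrightarrow{\pi} \HHH^1_{\rad}(A/\rad^2(A)) \xrightarrow{\varphi} \HHH^1(kQ^s) \xrightarrow{\res} \HHH^1(kK_2)\cong \mathfrak{sl}_2
\]
of maps already established in Sections~\ref{sec:lie} and~\ref{sec:rad-sq-zero}, so that the Lie algebra property is inherited for free and part~\eqref{item: formula delta} follows by unwinding. Your approach instead reads off the $2\times 2$ block of $\delta$ on $\langle a,b\rangle$ modulo $\rad^2(A)$ and projects to the traceless part. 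This is more self-contained and makes the formula in part~\eqref{item: formula delta} transparent from the outset, at the cost of having to check the bracket compatibility by hand (which you do correctly; note your transposed matrix convention is what makes $\delta\mapsto N(\delta)$ a genuine homomorphism rather than an anti-homomorphism).

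For part~(3) your reasoning is sound but the citation of Lemma~\ref{rem:shape} is off: that lemma describes shapes of Kronecker chains, whereas what you actually need is Proposition~\ref{prop:rad-square} applied to $eAe$ (non-wildness bounds the separated quiver to Dynkin/Euclidean, hence at most two arrows from $i$ to $j$, so $a,b$ still span the arrow space and $\Delta_{eAe}(a,b)$ is defined). Also, your final sentence slightly misidentifies the danger: $a,b$ can never fall into $\rad^2(eAe)$, since $\rad^2(eAe)\subseteq\rad^2(A)$ as you yourself note. The actual obstruction without non-wildness is that passing to $eAe$ can \emph{create} additional arrows from $i$ to $j$ (paths through deleted vertices become new arrows), which would mean $a^s,b^s$ no longer form a full $K_2$ component of the separated quiver of $eAe$, so $\Delta_{eAe}(a,b)$ would not be defined in the sense of the proposition.
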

\begin{proof}
Define $\Delta_A(a,b)$ as the composition
\begin{equation}
\HHH^1_{\rad}(A) \xrightarrow{\pi} \HHH^1_{\rad}(A/\rad^2(A)) \stackrel{\varphi}{\longrightarrow} \HHH^1(kQ^s) \stackrel{\res}{\longrightarrow} \HHH^1(kK_2)\cong \mathfrak{sl}_2,
\end{equation}
where $\pi$ is the composition of the maps $\pi_n$ defined in Proposition \ref{prop:def-pin}, $\varphi$ is the map defined in Corollary \ref{cor:solv-rad}, and $\res$ was defined in Lemma \ref{lem:idempotent-hh}. Since all of these maps are morphisms of Lie algebras, so is $\Delta$.
The second statement then follows from the definition of $\Delta$ and the choice of our fixed isomorphism $\HHH^1(kK_2)\cong \mathfrak{sl}_2$. 

For the third statement note that if the separated quiver of $A$ has a connected component which is a Kronecker quiver, then the separated quiver of $eAe$ has a connected component that contains a Kronecker quiver (and potentially additional arrows and vertices). Since $A$ is non-wild, also $eAe$ is non-wild by \cite[I.4.7(b)]{erdmann2006blocks}, so this component of its separated quiver must not actually be any bigger than the Kronecker quiver. So we can choose a presentation of $eAe$ in which $a$ and $b$ correspond to arrows, and the corresponding arrows in the separated quiver form a connected component isomorphic to $K_2$.
Hence $\Delta_{eAe}(a,b)$ is defined, and we can write down the diagram from the statement. The fact that this diagram commutes follows immediately from point \eqref{item: formula delta}.
\end{proof}

The map $\Delta_A(a,b)$ as defined above is only defined when $a$ and $b$ are arrows, that is, it depends on the chosen presentation of $A$ as $kQ/I$. We could extend this definition to allow arbitrary $a$ and $b$ subject to conditions. However, for the purposes of this paper, it seems more sensible to think of $\Delta_A(a,b)$ as actually depending on the chosen presentation, as we will ultimately use information on the $\Delta_A(a,b)$'s to infer properties of the presentation.  All we need to know is how $\Delta_A(a,b)$ transforms when we change presentation. 
We will state  the transformation law for the following special isomorphism by base change, which is all we will need later on.

\begin{defn}
	Let $Q$ be a quiver, and let $a,b$ be two different arrows in $Q$.
	Assume $s(a)=s(b)$, $t(a)=t(b)$ and assume that $a$ and $b$ are the only arrows from $s(a)$ to $t(a)$. Given a matrix 
	$$
	X=\left( \begin{array}{cc} x_1 & x_2 \\  x_3&x_4 \end{array} \right) \in \operatorname{GL}_2(k)
	$$
	we define an automorphism $T_X=T_X(a,b)$ by
	$$
	T_X:\ kQ \longrightarrow kQ:\ a  \mapsto x_1a+x_2 b, \ b \mapsto x_3a + x_4 b,\ c \mapsto c \textrm{ for any other arrow $c$} 
	$$
	and $T_X$ fixes all standard idempotents in $kQ$.
\end{defn}

\begin{prop}\label{prop:delta base change}
	Let $A=kQ/I$ be a finite-dimensional algebra, and let $a$ and $b$ be two arrows in $Q$ such that 
	$\Delta_A(a,b)$  is defined. Let $Y \in \operatorname{GL}_2(k)$, and set $J=T_Y^{-1}(I)$.
	Then
	$$
	\Delta_{kQ/J}(a,b)(T_Y^{-1}\circ \delta \circ T_Y) = Y \cdot \Delta_A(a,b)(\delta) \cdot Y^{-1} \quad\textrm{for all $\delta \in \Der_{\rad}(A)$}
	$$
	and 
	$$
	\Delta_{kQ/J}(c,d)(T_Y^{-1}\circ \delta \circ T_Y) = \Delta_A(c,d)(\delta)  \quad\textrm{for all $\delta \in \Der_{\rad}(A)$}		
	$$
	for any other two arrows $c$ and $d$ for which $\Delta_A(c,d)$ is defined.
\end{prop}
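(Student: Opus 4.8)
The plan is to deduce both identities from the explicit description of $\Delta$ provided by Proposition~\ref{prop: Delta}\eqref{item: formula delta}. First, since $T_Y$ fixes the standard idempotents and maps the span of the arrows into itself, the hypothesis $T_Y(J)=I$ means $T_Y$ descends to an isomorphism $kQ/J\xrightarrow{\ \sim\ }kQ/I=A$ which, together with its inverse, preserves $\rad$ and $\rad^2$. Hence for every $\delta\in\Der_{\rad}(A)$ the map $\delta':=T_Y^{-1}\circ\delta\circ T_Y$ lies in $\Der_{\rad}(kQ/J)$, with $\delta'(e_i)=T_Y^{-1}(\delta(e_i))$, so conjugation by $T_Y$ induces a Lie algebra homomorphism $\HHH^1_{\rad}(A)\to\HHH^1_{\rad}(kQ/J)$. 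Moreover $kQ/J$ has the same underlying quiver as $A$ and $(kQ/J)/\rad^2=kQ/\rad^2=A/\rad^2$, so it has the same separated quiver as $A$; in particular both $\Delta_{kQ/J}(a,b)$ and $\Delta_{kQ/J}(c,d)$ are defined. Since $\Delta$ factors through $\HHH^1_{\rad}$, it suffices to verify the two equalities on a single representative of each class, and by Lemma~\ref{remark idempotents map to zero} I would take a representative $\delta$ with $\delta(e_i)=0$ for all $i$; then $\delta'$ also kills every standard idempotent, so Proposition~\ref{prop: Delta}\eqref{item: formula delta} applies to both $\delta$ and $\delta'$ for any admissible pair of arrows.

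For the first identity, write $\Delta_A(a,b)(\delta)=xH+yE+zF$, so that $\delta(a)\equiv(w+x)a+zb$ and $\delta(b)\equiv ya+(w-x)b$ modulo $\rad^2(A)$ for some $w\in k$. One then expands $\delta'(a)=T_Y^{-1}(\delta(T_Y(a)))$ and $\delta'(b)=T_Y^{-1}(\delta(T_Y(b)))$ using that $T_Y$ and $T_Y^{-1}$ act linearly on $\langle a,b\rangle$, that $\delta(a),\delta(b)\in e_{s(a)}Ae_{t(a)}$ (so their images modulo $\rad^2$ lie in $\langle a,b\rangle$, as $a,b$ are the only arrows from $s(a)$ to $t(a)$), and that $T_Y^{-1}$ preserves $\rad^2$. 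This expresses $\delta'(a)$ and $\delta'(b)$ modulo $\rad^2$ as explicit linear combinations of $a$ and $b$, from which $\Delta_{kQ/J}(a,b)(\delta')$ is read off by applying Proposition~\ref{prop: Delta}\eqref{item: formula delta} once more. The upshot of the bookkeeping is that the linear action of $\delta'$ on $\langle a,b\rangle$ modulo $\rad^2$ is the conjugate, by the matrix through which $T_Y$ acts on $\langle a,b\rangle$, of the corresponding action of $\delta$; since conjugation fixes the scalar part and preserves traces, it carries the traceless part $\Delta_A(a,b)(\delta)$ to its conjugate, which — once one matches the matrix/transpose convention built into the chosen isomorphism $\HHH^1(kK_2)\cong\mathfrak{sl}_2$ — is $Y\,\Delta_A(a,b)(\delta)\,Y^{-1}$.

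For the second identity, recall that $T_Y$ fixes every arrow other than $a$ and $b$, so $T_Y(c)=c$, $T_Y(d)=d$, and hence $\delta'(c)=T_Y^{-1}(\delta(c))$, $\delta'(d)=T_Y^{-1}(\delta(d))$. Since $c,d$ are the only arrows from $s(c)$ to $t(c)$ and $\delta(c)\in e_{s(c)}Ae_{t(c)}$, Proposition~\ref{prop: Delta}\eqref{item: formula delta} applied with the pair $(c,d)$ expresses $\delta(c)$ and $\delta(d)$ as linear combinations of $c$ and $d$ modulo $\rad^2$; applying $T_Y^{-1}$, which fixes $c$ and $d$ and preserves $\rad^2$, leaves these expressions unchanged. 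Therefore the linear action of $\delta'$ on $\langle c,d\rangle$ modulo $\rad^2$ coincides with that of $\delta$, and so $\Delta_{kQ/J}(c,d)(\delta')=\Delta_A(c,d)(\delta)$.

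The genuinely delicate point is the first identity. One has to carry the $\rad^2$-valued correction terms $r(a),r(b)$ of Proposition~\ref{prop: Delta}\eqref{item: formula delta} through the computation — they never affect the $\mathfrak{sl}_2$-component, being annihilated by $\pi$ — and, more importantly, one must keep careful track of the matrix conventions implicit in the chosen identification $\HHH^1(kK_2)\cong\mathfrak{sl}_2$, so that the conjugating factor comes out as $Y$ rather than, say, its inverse-transpose. Everything else is formal.
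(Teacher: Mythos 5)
Your proposal is correct and takes essentially the same approach as the paper: both arguments rely on the explicit description of $\Delta$ from Proposition~\ref{prop: Delta}\eqref{item: formula delta}, choose a representative $\delta$ killing the standard idempotents, and track the action of $T_Y$ on the arrow span modulo $\rad^2$. The paper packages this in a single displayed matrix identity, whereas you describe the same bookkeeping in prose while explicitly flagging the transpose-convention subtlety that the paper's chosen identification $\HHH^1(kK_2)\cong\mathfrak{sl}_2\subset k^{2\times 2}$ quietly resolves in favour of $Y\,\Delta\,Y^{-1}$.
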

\begin{proof}
	By Proposition~\ref{prop: Delta} we know that for any $\delta \in \Der_{\rad}(A)$
	$$
	\left(\begin{array}{c} \delta(a)\\\delta(b) \end{array}\right) = (\Delta_A(a,b)(\delta) + w \cdot \id_2) \cdot 
	\left(\begin{array}{c} a\\\ b \end{array}\right) + \left(\begin{array}{c} r(a)\\r(b) \end{array}\right)
	$$
	for certain $w\in k$, $r(a),r(b)\in \rad^2(A)$. Conversely, it follows that $\delta(a)$ and $\delta(b)$ uniquely determine $\Delta_A(a,b)(\delta)$ as an element of $\mathfrak{sl}_2 \subset k^{2\times 2}$.
	
	We can compute
	$$
	\left(\begin{array}{c} T_Y^{-1}(\delta(T_Y(a)))\\T_Y^{-1}(\delta(T_Y(b))) \end{array}\right) = Y\cdot (\Delta_A(a,b)(\delta) + w \cdot \id_2)\cdot Y^{-1} \cdot 
	\left(\begin{array}{c} a\\\ b \end{array}\right) + \left(\begin{array}{c} r(a)'\\r(b)' \end{array}\right)
	$$
	for certain $r(a)', r(b)'\in \rad^2(kQ/J)$ (note that we view $a$ and $b$ as elements of $kQ/J$ here), which uniquely determines $\Delta_{kQ/J}(a,b)(T_Y^{-1}\circ \delta \circ T_Y)$ and therefore shows the first assertion. The second assertion follows since, by definition, $T_Y(c)=c$ for any arrow $c$ which is neither $a$ nor $b$. 
\end{proof}

\begin{defn}
A maximal Kronecker chain $C=((a_1,b_1),\ldots,(a_n,b_n))$ is called \emph{surjective} if $\Delta_A(a_i,b_i)$ is surjective for some $i$.
\end{defn}
	
The following characterisation of surjective maximal Kronecker chains is the main ingredient for the proof of Theorem \ref{thm:mainmain}.

\begin{thm}\label{thm:kronecker main}
		Let $A=kQ/I$ be a non-wild finite-dimensional algebra. Given a maximal Kronecker chain $C=((a_1,b_1),\ldots,(a_n,b_n))$  the following hold:
	\begin{enumerate}
		\item\label{statement-1}$C$ is surjective if and only if $C$ has standard relations.
		\item\label{statement-2} If $C$ is surjective, then all $\Delta_A(a_i,b_i)$ are conjugate.
	\end{enumerate}
\end{thm}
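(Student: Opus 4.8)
The plan is to reduce as far as possible to the case of a single Kronecker quiver $K_2$ and then to a ``double Kronecker'' quiver, using the restriction maps $\res$ and the compatibility diagram in Proposition \ref{prop: Delta}(3). By Lemma \ref{rem:shape} a maximal Kronecker chain $C$ in a non-wild algebra is either a double loop $L_2$, a doubled cycle ($\widetilde A_n$), or a doubled path ($A_n$), and in all cases $C$ spans a connected component of $Q^s$ (resp.\ sits in a component that is exactly $K_2$). So for any $i$, the idempotent $e^{(i)}$ that is the sum of the standard idempotents on the vertices of the $i$-th link gives, via Proposition \ref{prop: Delta}(3), a factorization $\Delta_A(a_i,b_i)=\Delta_{e^{(i)}Ae^{(i)}}(a_i,b_i)\circ\res$; and $e^{(i)}Ae^{(i)}$ is again non-wild by \cite[I.4.7(b)]{erdmann2006blocks}. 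First I would analyze a \emph{single link}: show that $\Delta_{e^{(i)}Ae^{(i)}}(a_i,b_i)$ is surjective iff, after an isomorphism by base change of $e^{(i)}Ae^{(i)}$, the relations among $a_i,b_i$ (and their immediate compositions with the neighbouring arrows $a_{i\pm1},b_{i\pm1}$) are exactly the standard ones \ref{cond:S1}--\ref{cond:S3} local to that link. This is where the automorphism-group heuristic of \S\ref{reduction} is made precise: surjectivity of $\Delta$ means the image is all of $\mathfrak{sl}_2$, hence closed under the $\operatorname{SL}_2$-action on the span of $a_i,b_i$; an $\operatorname{SL}_2$-orbit of any nonzero quadratic relation (a path of length two through the link) is either empty or all of the length-two paths, and tameness of $e^{(i)}Ae^{(i)}$ — via Proposition \ref{prop:rad-square} applied to its separated quiver — forbids the ``no relations'' alternative except in the harmless configurations, forcing precisely the standard relations. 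The transformation law in Proposition \ref{prop:delta base change} is exactly what lets me translate ``$\Delta_A(a_i,b_i)$ surjective'' into a statement that survives base change, and conversely shows that if the standard relations hold then one can exhibit an explicit derivation $\delta$ (essentially the one coming from the $\mathfrak{sl}_2$ on the Kronecker component of $Q^s$, lifted through the surjection $\varphi$ of Corollary \ref{cor:solv-rad} and the $\pi_n$'s) with $\Delta_A(a_i,b_i)(\delta)$ any prescribed matrix, giving surjectivity.

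Next I would \emph{propagate along the chain}. Suppose $\Delta_A(a_i,b_i)$ is surjective for some $i$; I want to show the relations are standard at \emph{every} link and that all $\Delta_A(a_j,b_j)$ are surjective and in fact conjugate. The key device is a ``two-link'' lemma — presumably the promised Lemma \ref{lemma double kronecker} — treating the subquiver spanned by $(a_i,b_i)$ and $(a_{i+1},b_{i+1})$ (an $A_3$ or the relevant piece of $\widetilde A_n$ with doubled arrows): one shows that surjectivity of $\Delta$ on one link, combined with condition \ref{Kc:C} (at least one composite $a_ia_{i+1},\dots,b_ib_{i+1}\notin I$), forces the standard relations \ref{cond:S2} on that pair and forces $\Delta$ on the neighbouring link to be surjective as well, with a controlled relation between the two matrices. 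Concretely, a derivation $\delta$ realizing $\Delta_A(a_i,b_i)(\delta)=M$ acts on $a_{i+1},b_{i+1}$ by some matrix $N$ modulo $\rad^2$ (by Proposition \ref{prop: Delta}(2) for the adjacent Kronecker component), and the Leibniz rule applied to the non-zero composite from \ref{Kc:C} gives a linear constraint forcing $N$ to be determined by $M$ up to scalar; iterating and using \ref{Kc:C} along the whole chain (and, in the cyclic case, closing up with \ref{cond:S3} via the rotation equivalence) yields that all $\Delta_A(a_j,b_j)$ are conjugate, which is statement \eqref{statement-2}, and simultaneously that the standard relations hold globally, completing the ``surjective $\Rightarrow$ standard relations'' direction of \eqref{statement-1}. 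For the converse direction of \eqref{statement-1}, if $C$ has standard relations then (after the base change witnessing this) the explicit $\mathfrak{sl}_2$-worth of derivations constructed link-by-link is consistent across links precisely because \ref{cond:S2}--\ref{cond:S3} are the relations preserved by the diagonal $\operatorname{SL}_2$-action, so $\Delta_A(a_1,b_1)$ (say) is surjective.

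I expect the main obstacle to be the single-link classification: proving that surjectivity of $\Delta_{e^{(i)}Ae^{(i)}}(a_i,b_i)$ \emph{forces} the standard relations and nothing weaker. One must carefully enumerate the possible ideals of relations in the small idempotent subalgebra $e^{(i)}Ae^{(i)}$ (or a suitable quotient killing arrows outside the chain, which requires justifying that such a quotient is still non-wild so that Proposition \ref{prop:rad-square} applies), check which are stable under the induced $\operatorname{SL}_2$-action on $\langle a_i,b_i\rangle$ using Proposition \ref{prop:delta base change}, and then invoke the separated-quiver criterion to rule out every stable ideal except the standard one — the ``either (almost) every length-two path is a relation or none is, and tameness kills `none','' dichotomy flagged in \S\ref{reduction}. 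Getting the boundary bookkeeping right (the interaction with arrows $a_{i\pm1},b_{i\pm1}$ at the ends of a link, condition \ref{Kc:D} ensuring arrows are distinct, and condition \ref{cond:S1} forcing composites with arrows outside $C$ to vanish) is the fiddly part; the propagation step and the converse are then comparatively formal given the transformation law of Proposition \ref{prop:delta base change} and the commuting diagram of Proposition \ref{prop: Delta}(3).
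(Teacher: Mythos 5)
Your plan captures the paper's broad architecture --- restrict to small idempotent subalgebras via Proposition~\ref{prop: Delta}(3), a ``two-link'' lemma (Lemma~\ref{lemma double kronecker}) as the engine, the $\mathfrak{sl}_2$-orbit dichotomy, and the transformation law Proposition~\ref{prop:delta base change} --- but it omits two steps that are essential and genuinely non-formal, and understates where the difficulty actually lies.

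First, you never reduce to the case $\rad^3(A)=0$. The two-link lemma, applied inside the three-vertex idempotent subalgebra $eAe$ after killing backward paths, only tells you that compositions like $a_ia_{i+1}$, $b_ib_{i+1}$, $a_ib_{i+1}+b_ia_{i+1}$ lie in $\rad^3(A)$ plus the stuff you killed --- not that they are zero. Turning ``zero modulo $\rad^3$'' into ``zero'' is exactly Lemma~\ref{lemma:red mod rad3} in the paper: a Nakayama-lemma argument using the fact that $a_i,b_i$ are the only arrows out of $s(a_i)$ (which relies on non-wildness via Lemma~\ref{rem:shape}). Without this reduction step (Corollary~\ref{cor: reduction to rad3 zero}), the conclusion of your single- and two-link analyses is strictly weaker than the standard relations, and the remainder of your argument does not go through.

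Second, and more seriously, you claim ``the propagation step and the converse are then comparatively formal given the transformation law.'' They are not. Lemma~\ref{lemma double kronecker} only produces a base change making each \emph{pair} of adjacent links standard, and there is no a priori reason these base changes are compatible across the whole chain --- a base change $T_{X_i}$ on $\langle a_i,b_i\rangle$ at link $i$ could differ from the $T_{Y_i}$ demanded by link $i+1$. Resolving this is the content of Proposition~\ref{prop: delta normalises relations}: one uses the surjectivity of $\Delta_A(a,b)$ together with the equality (not mere conjugacy, cf.~Proposition~\ref{prop: deltas are equal}) $\Delta_A(a,b)=\Delta_A(c,d)$ to force the two base-change matrices $X,Y$ to be proportional, $X=zY$, after which a direct computation shows the relations are genuinely in $I$ rather than merely in a base-changed ideal. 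Your Leibniz-rule constraint sketch for propagation is closer to Proposition~\ref{prop: deltas are equal} (one direction), but it does not address this patching obstruction. Relatedly, the local case (double loop, $A$ local) cannot be handled by your subalgebra restriction and requires a separate argument: the paper invokes Ringel's classification of tame local algebras (\cite[Theorem~(1.4)]{ringel1975representation}) in Proposition~\ref{prop:local} to narrow the possible ideals before the $\mathfrak{sl}_2$-orbit argument kicks in. The \ref{cond:S1} boundary relations also need their own lemma (Lemma~\ref{lem:relations s1}, using \cite[Theorem~1]{brustle2001tame} for the back-arrow case), not just bookkeeping.
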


The proof of Theorem~\ref{thm:kronecker main} is quite long, but can be split up into steps as follows:
\begin{enumerate}
\item The if-direction: see \S\ref{sec:if}.
\item The local case: see \S\ref{sec:local}. 
\item Reduction to $\rad^3(A)=0$: see \S\ref{sec:reduction}.
\item Establishing \ref{cond:S2}--\ref{cond:S3} locally: see \S\ref{sec:locals2s3}.
\item Establishing \ref{cond:S1}: see \S\ref{sec:s1}.
\item Establishing \ref{cond:S2}--\ref{cond:S3} globally: see \S\ref{sec:s2s3glo}.
\end{enumerate}
Finally, in \S\ref{sec:proofkromain} and \S\ref{sec:proofmainmain}, we give the proofs of Theorem \ref{thm:kronecker main} and Theorem \ref{thm:mainmain}.

\subsection{The if-direction}
\label{sec:if}
As described above, we start with the ``if''-direction of Theorem \ref{thm:kronecker main}\eqref{statement-1}. The statement below gives a bit more information, which we will need when proving Theorem \ref{thm:mainmain}.

\begin{prop}\label{prop:trivial if}
	Let $A=kQ/I$ be a non-wild finite-dimensional algebra, and let $C=((a_1,b_1),\ldots,(a_n,b_n))$ be a maximal Kronecker chain. If the relations \ref{cond:S1}--\ref{cond:S3} hold, then $C$ is surjective. 
	Moreover, there is a Lie subalgebra $\mathfrak{g} \subset \HHH^1_{\rad}(A)$ such that $\Delta_A(a_i,b_i)|_{\mathfrak{g}}:\mathfrak{g}  \to \mathfrak{sl}_2$ is an isomorphism, for every $1\leq i \leq n$.
\end{prop}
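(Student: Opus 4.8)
The plan is to construct the subalgebra $\mathfrak{g}$ explicitly as a space of derivations supported on the arrows of the chain, and then check that $\Delta_A(a_i,b_i)$ restricts to an isomorphism on it. First I would use the structure imposed by \ref{cond:S1}--\ref{cond:S3}: after an isomorphism by base change we may assume the relations \ref{cond:S1}--\ref{cond:S3} hold on the nose in $A=kQ/J$ (this is harmless by Proposition~\ref{prop:delta base change}, which tells us precisely how the various $\Delta_A(a_i,b_i)$ transform under such a change). By Lemma~\ref{rem:shape} the chain $C$ is a connected component of $Q$ of one of the three listed shapes, or sits inside $Q$ with no other arrows meeting the sources/targets of the $a_i$; condition \ref{cond:S1} then says the ideal generated by the chain arrows interacts trivially (multiplicatively) with everything else, so we can build derivations that act only on the $a_i,b_i$ and kill every other arrow and every standard idempotent.

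The key construction: for a fixed $X=\left(\begin{smallmatrix} x_1 & x_2 \\ x_3 & x_4\end{smallmatrix}\right)\in\mathfrak{sl}_2$ I want a derivation $\delta_X\in\Der_{\rad}(A)$ with $\delta_X(a_i)=x_1 a_i + x_3 b_i$, $\delta_X(b_i)=x_2 a_i + x_4 b_i$ (matching the conventions in Proposition~\ref{prop: Delta}\eqref{item: formula delta} with $w=0$ and $r(a_i)=r(b_i)=0$), $\delta_X$ zero on all standard idempotents and on all arrows not in $C$. The only thing to verify is that this assignment is compatible with the relations in $J$, i.e.\ that $\delta_X(I)\subseteq J$ — equivalently that $\delta_X$ annihilates each generator listed in \ref{cond:S1}--\ref{cond:S3} (a relation $r\in J$ forces $0=\delta_X(r)$ modulo $J$ so one checks on generators). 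For \ref{cond:S1}, $\delta_X(cd)=\delta_X(c)d + c\,\delta_X(d)$; if $c$ is a chain arrow then $\delta_X(c)$ is still a linear combination of chain arrows with the same source and target, and $\delta_X(c)d$ is a sum of products of a chain arrow with $d$, all of which lie in $J$ by \ref{cond:S1}, while $c\,\delta_X(d)=0$ since $d\notin C$ implies $\delta_X(d)=0$; symmetrically for $dc$. For \ref{cond:S2}, applying $\delta_X$ to $a_i a_{i+1}$ gives $(x_1 a_i + x_3 b_i)a_{i+1} + a_i(x_1 a_{i+1}+x_3 b_{i+1}) = x_1(a_i a_{i+1}) + x_3(b_i a_{i+1} + a_i b_{i+1}) + \dots$, which is a $k$-linear combination of the three relations $a_i a_{i+1}$, $b_i b_{i+1}$, $a_i b_{i+1}+b_i a_{i+1}$ in $J$ — a direct bilinear computation using exactly that $\mathfrak{sl}_2$ acts on the span of $(a_i,b_i)$ and on the span of $(a_{i+1},b_{i+1})$ by the standard representation, so the three quadratic expressions span an $\mathfrak{sl}_2$-subrepresentation of $\mathrm{Sym}^2\oplus\wedge^2$ of the tensor product; \ref{cond:S3} is identical using the cyclic closure relation. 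Hence $\delta_X$ is well-defined, and $X\mapsto \delta_X + \Inn(A)$ is visibly linear and injective (its image of $a_1$ already determines $X$ since $r(a_1)\in\rad^2$ can only be zero here as $\rad^2$ contributes nothing in degree one among the chain arrows); that it is a Lie algebra map is a routine bracket computation — $[\delta_X,\delta_Y]$ acts on $a_i$ by the matrix commutator — so $\mathfrak{g}:=\{\delta_X+\Inn(A) : X\in\mathfrak{sl}_2\}$ is a Lie subalgebra of $\HHH^1_{\rad}(A)$ isomorphic to $\mathfrak{sl}_2$.

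Finally, by the explicit formula in Proposition~\ref{prop: Delta}\eqref{item: formula delta}, reading off $\delta_X(a_i)$ and $\delta_X(b_i)$ shows $\Delta_A(a_i,b_i)(\delta_X)=X$ for every $i$ simultaneously, so $\Delta_A(a_i,b_i)|_{\mathfrak g}$ is the inverse of $X\mapsto\delta_X$ and in particular an isomorphism onto $\mathfrak{sl}_2$ for each $i$; surjectivity of $\Delta_A(a_i,b_i)$ (hence of $C$) is immediate. The one genuine subtlety — the step I expect to be the main obstacle — is making the "well-definedness modulo $J$" argument airtight: one must be sure that \emph{every} element of $J$, not just the displayed generators, is killed (using that $J$ is generated as a two-sided ideal by those elements together, a priori, with relations not involving chain arrows at all, on which $\delta_X$ vanishes trivially), and that no further paths through the chain create relations forcing $r(a_i)\neq 0$; here Lemma~\ref{rem:shape} and \ref{cond:S1} are exactly what rule out interference from the rest of $Q$. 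I would also double-check the $n=1$ local case (the double loop $L_2$) separately, where \ref{cond:S2} is vacuous and only \ref{cond:S1} and \ref{cond:S3} (with $a_1=a_n$) constrain things — this is essentially the content already isolated in Proposition~\ref{prop:local}.
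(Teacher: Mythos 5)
Your construction of the derivations $\delta_X$ supported on the chain arrows (vanishing on all standard idempotents and all arrows outside $C$, acting on each pair $(a_i,b_i)$ via the same matrix) and the verification on the generators coming from \ref{cond:S1}--\ref{cond:S3} is exactly the paper's approach, and your representation-theoretic observation that the relations in \ref{cond:S2}--\ref{cond:S3} span the $\mathrm{Sym}^2$-subrepresentation is implicitly what makes the bilinear computation close up. On the obstacle you flag (well-definedness against \emph{all} of $J$, not only the displayed generators), the paper resolves it slightly more cleanly than your sketch: rather than asserting a generating set for $J$, it passes to the ideal $J_1$ generated by $J_0$ (the mixed chain/non-chain products) together with the \ref{cond:S2}--\ref{cond:S3} relations, observes that every path of length $\geq 3$ through the chain already lies in $J_1$, and that the residual degree-two spaces $e_i(kQ/J_1)e_{i+2}$ are at most one-dimensional --- so any subspace, in particular $e_i(I/J_1)e_{i+2}$, is automatically $\delta$-stable, with no need to decide whether $a_ib_{i+1}$ itself lies in $I$ or to invoke condition \eqref{Kc:C} of Definition~\ref{def:maxkron}.
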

\begin{proof}
Given $x,y,z \in k$, we can define a derivation $\delta=\delta_{(x,y,z)}$ on $kQ$ by setting
	$$
		\delta(a_i) = x a_i+z b_i, \quad \delta(b_i)= y a_i -x b_i
	$$
	for each $1\leq i\leq n$ and $\delta(c)=0$ for any other arrow. We would have 
	$$\Delta_A(a_i,b_i)(\delta)=xH + yE + zF \in \mathfrak{sl}_2 \quad \textrm{for any $i$} $$
	provided $\delta$ actually induces a derivation on $A$, that is, provided $\delta(I) \subseteq I$.
	In that case $\Delta_A(a_i,b_i)$ would be surjective, and we can then also set $\mathfrak{g}=\langle \delta_{(1,0,0)}, \delta_{(0,1,0)}, \delta_{(0,0,1)} \rangle_k$. Hence we are reduced to showing $\delta(I)\subseteq I$.
	
	Consider the ideal $J_0$ generated by paths made up out of at least one arrow in $C$ and at least one arrow not in $C$. Since \ref{cond:S1} holds, $J_0 \subset I$, and by definition of $\delta$, we also have $\delta(J_0) \subset J_0$, so $\delta$ induces a derivation on $kQ/J_0$. The algebra $kQ/J_0$ is spanned by paths involving either exclusively arrows in $C$ or exclusively arrows not in $C$. Note that a path involving exclusively arrows in $C$ cannot have the same source and target as a path involving exclusively arrows not in $C$. The reason is that if there were any arrows sharing their source or target with one of the $a_i$ or $b_i$, the algebra $kQ^s$ would be wild, contradicting the assumption that $A$ is not wild. So actually $I/J_0$ is generated by linear combinations of paths involving exclusively arrows outside $C$ (these get mapped to zero by $\delta$) and linear combinations of paths involving exclusively arrows in $C$ (for which we still have to check that $\delta$ maps them back into $I$). 
	
	Define $J_1 = J_0 + (a_ia_{i+1}, b_ib_{i+1}, a_ib_{i+1}+ b_ia_{i+1}, a_na_1, b_nb_1, a_nb_1+b_na_1 \ | \ 1 \leq i < n)$.  Because of \ref{cond:S2} and \ref{cond:S3}, $J_1\subseteq I$, and one verifies that $\delta(J_1)\subseteq J_1$. Note that any path of length $\geq 3$ consisting of arrows in $C$ is already contained in $J_1$. 
	Moreover, if $e_i\in kQ$ denotes the idempotent corresponding to the source of the arrow $a_i$, and $e_{n+1}$ corresponds to the target of $a_n$, we have $\dim_k e_i(kQ/J_1)e_{i+2} =1$ for 
	$1\leq i <n$, and also $\dim_k e_n(kQ/J_1)e_{2}\leq 1$. Now we are reduced to showing that 
	$\delta$ maps $e_{i}(I/J_1)e_{i+2}$ into itself for each $i$ (and possibly the same for $e_n(I/J_1)e_2$). But since each of these is a subspace of an (at most) one-dimensional vector space stabilised by $\delta$, this is actually trivially the case.
\end{proof}

\subsection{The local case}
\label{sec:local}
If $A$ is connected local, then by Lemma \ref{rem:shape} the quiver of $A$ is a two-loop quiver $L_2$, so Theorem~\ref{thm:kronecker main} \eqref{statement-2} is an empty statement, and the following is all we need to show.

\begin{prop}
	\label{prop:local}
	Let $A=k\langle a,b\rangle/I$ be a non-wild local algebra such that 
	$\Delta_A(a,b)$ is surjective. Then $A$ is isomorphic by base change to $kQ/J$, where
	 $J=(a^2,b^2,ab,ba)$ or $J=(a^2, b^2, ab+ba)$.
\end{prop}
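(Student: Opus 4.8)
\textbf{Plan for the proof of Proposition~\ref{prop:local}.}
The plan is to exploit the surjectivity of $\Delta_A(a,b)$ together with the transformation law of Proposition~\ref{prop:delta base change} and the non-wildness of $A$. First I would set $R=\rad(A)=\langle a,b\rangle$ and note that, since $A$ is local with quiver $L_2$, the space $R/R^2$ is two-dimensional, spanned by the images of $a$ and $b$. Surjectivity of $\Delta_A(a,b)$ means there is a derivation $\delta\in\Der_\rad(A)$, which we may take to kill the (unique) standard idempotent, with $\Delta_A(a,b)(\delta)=E=a\cdot\mathbf 1_b$; by Proposition~\ref{prop: Delta}\eqref{item: formula delta} this forces $\delta(a)\equiv 0$, $\delta(b)\equiv a$ modulo $R^2$, and similarly from the other $\mathfrak{sl}_2$-generators we obtain derivations realising $H$ and $F$. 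Conjugating by the automorphisms $T_X$ and using Proposition~\ref{prop:delta base change}, the image $\mathfrak g$ of a copy of $\mathfrak{sl}_2$ inside $\HHH^1_\rad(A)$ acts on $R/R^2$ as the standard two-dimensional representation of $\mathfrak{sl}_2$ (here is where $\cha(k)\neq 2$ enters, so that this representation is irreducible and the relevant $\mathrm{SL}_2$-orbits are as expected).

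Next I would analyse $R^2/R^3$, which is spanned by the images of the four length-two paths $a^2, ab, ba, b^2$, and carries an induced action of $\mathfrak g$ (a derivation preserves each $R^i$, hence acts on each successive quotient). As an $\mathfrak{sl}_2$-module this four-dimensional space is the second tensor power of the standard representation, which in characteristic $\neq 2$ decomposes as a trivial summand spanned by $ab-ba$ and a three-dimensional irreducible (the symmetric square) spanned by $a^2, ab+ba, b^2$. The ideal $I$ (more precisely its image $I\cap R^2$ modulo $R^3$, i.e.\ the degree-two relations) is a $\mathfrak g$-submodule, hence a sum of some of these two isotypic pieces. So the degree-two part of $I$ is, up to base change, one of: $0$; the line $\langle ab-ba\rangle$; the three-space $\langle a^2,ab+ba,b^2\rangle$; or the whole $R^2/R^3$. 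The case of no degree-two relations must be excluded because then $A$ would have $kQ/(a^3,a^2b,\dots)$-type behaviour with $\overline{Q^s}$ too large — concretely, $\rad(A)^2/\rad(A)^3$ being all of the regular-ish situation forces the separated quiver to contain a wild subquiver (two loops with no relations gives a non-domestic, wild algebra), contradicting non-wildness; this is precisely the "tameness precludes the empty-relations case" heuristic described in the introduction. Similarly the line $\langle ab-ba\rangle$ alone leads to the commutative algebra $k[a,b]/(ab-ba,\dots)$ truncated, whose separated quiver is again wild.

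This leaves the two admissible possibilities for the degree-two relations: all four paths vanish, giving $J\supseteq(a^2,b^2,ab,ba)$ and hence (since then $R^3=0$) exactly $J=(a^2,b^2,ab,ba)$; or the symmetric square vanishes, giving $J\supseteq(a^2,b^2,ab+ba)$. In the second case I would then check that $R^3=0$ as well: in $kQ/(a^2,b^2,ab+ba)$ one computes $aba=-a(ba)=\dots=0$ and likewise for all length-three paths, so automatically $J=(a^2,b^2,ab+ba)$. Throughout, the passage "$\mathfrak g$-submodule of $R^2/R^3$" $\Rightarrow$ "genuine base change making $I$ standard" requires integrating the infinitesimal $\mathfrak{sl}_2$-action to an honest $\mathrm{SL}_2(k)$-action on the two-dimensional span of $a,b$ and using that $I$ is stable under it; since $\cha(k)\neq 2$ and we are in dimension two this is standard, but it is the step that needs the most care. \textbf{The main obstacle} I anticipate is exactly this last point — rigorously upgrading the Lie-algebra statement to a statement about the ideal $I$ itself (not just its associated graded), i.e.\ ruling out that higher-order terms in the relations obstruct the base change; handling this cleanly is presumably why the paper first reduces to $\rad^3(A)=0$ in \S\ref{sec:reduction}, and I would follow that strategy, doing the local analysis above after truncating.
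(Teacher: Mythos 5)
Your proposal takes a genuinely different route from the paper's proof, and the core idea --- that the image of $I$ in $\rad^2(kQ)/\rad^3(kQ)$ is an $\mathfrak{sl}_2$-submodule of $V\otimes V$ (with $V$ the standard representation), hence is one of the four canonical subspaces $0$, $\wedge^2 V=\langle ab-ba\rangle$, $S^2V=\langle a^2,ab+ba,b^2\rangle$, or $V\otimes V$ when $\cha(k)\neq 2$ --- is correct and elegant; it formalises the heuristic sketched in \S\ref{reduction}. The passage from ``$I_2$ is one of the latter two'' to the stated conclusion then goes through via a Nakayama argument as you indicate (indeed, since $\langle a^2,ab+ba,b^2\rangle$ and $\rad^2(kQ)$ are $\operatorname{GL}_2$-invariant, no base change is even needed). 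The paper instead quotes Ringel's classification of tame local algebras \cite[Theorem (1.4)]{ringel1975representation} to put the degree-two relations into a normal form, and then uses explicit derivation computations plus Nakayama.

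There is, however, a genuine gap in the step where you exclude $I_2=0$ and $I_2=\langle ab-ba\rangle$. You appeal to the separated quiver ``containing a wild subquiver'', but for a local algebra on two loops the separated quiver is always $\widetilde{A}_1$ --- a tame Euclidean graph --- no matter what $I$ is. Proposition~\ref{prop:rad-square} constrains only the representation type of the radical-square-zero algebra $A/\rad^2(A)$, which here is always the Kronecker algebra and hence tame; it gives no obstruction on $A$ itself. What your argument actually needs is that $k\langle a,b\rangle/\rad^3$ (the $I_2=0$ case) and $k[a,b]/\mathfrak{m}^3$ (the $I_2=\langle ab-ba\rangle$ case) are wild, so that $A$, which surjects onto one of them, is wild too. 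Neither wildness statement follows from anything established earlier in the paper; they are exactly the kind of input Ringel's classification supplies (neither algebra appears on his tame list). So your approach does not dispense with Ringel as it seems to suggest --- it still needs it, or a direct wildness verification for those two specific algebras, just in a weaker form than the normal form the paper extracts. A smaller point: the paper does not reduce to $\rad^3(A)=0$ before treating the local case; in Proposition~\ref{prop:local} the vanishing of $\rad^2$ or $\rad^3$ is a \emph{conclusion} of the Nakayama argument, not a prior truncation, so the concern you flag about higher-order terms is dealt with within the proof rather than by a preliminary reduction.
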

\begin{proof}
	Recall that we assume, by convention, that $I$ is contained in the paths of length two (otherwise $\Delta_A(a,b)$ would be undefined, as the quiver of $A$ would not actually contain a double loop). In particular $I \subseteq (a^2,b^2,ab,ba)$.
	
	We will  make use of a classification result for tame local algebras by Ringel. Namely, by 
	\cite[Theorem (1.4)]{ringel1975representation} we may assume that $A$ is isomorphic to $kQ/J$ for an ideal $J$ for which one of the following holds:
	\begin{enumerate}
		\item\label{case 1} $ab+r \in J$ and $ba+q_1 a^2 + q_2 b^2+s \in J$ for $q_1,q_2\in k$; or
		\item\label{case 2} $a^2+r\in J$ and $b^2+s\in J$
	\end{enumerate}
	where in either case $r$ and $s$ only involve monomials in $\rad^3(kQ/J)$. Note that in case (4) in \cite{ringel1975representation} one needs to apply the change of variables $\tilde Y=Y-X$ to get the presentation into the form given above, but in all other cases no adaptations are necessary.
	 In fact, we may assume that $A$ and $kQ/J$ are isomorphic by base change, and replace, without loss of generality, $I$ by $J$.
	By our assumption on the surjectivity of $\Delta_A(a,b)$ we may, by Proposition~\ref{prop: Delta}, also assume that there 
	are  $\delta_1, \delta_2 \in \Der(A)$ such that
	\begin{align}
	\delta_1(a) &= w_1 a + b + r_1(a), & \delta_2(a) &= w_2 a + r_2(a), \\
	\delta_1(b)&= w_1 b + r_1(b), & \delta_2(b)&= a + w_2b + r_2(b),
	\end{align}
	for some $w_1,w_2\in k$ and $r_1(a), r_2(a),r_1(b),r_2(b) \in \rad^2(A)$.
	
	In case~\eqref{case 1} we have $ab\in \rad^3(A)$ and therefore
	$$\rad^3(A)\ni \delta_1(ab)= b^2+ 2w_1ab + r_1(a)b+ar_1(b),$$ hence $b^2 \in \rad^3(A)$. Similarly
	$$\rad^3(A)\ni \delta_2(ab)= a^2 +2w_2ab+ r_2(a)b+ar_2(b),$$
	hence $a^2 \in \rad^3(A)$. We conclude that $a^2,b^2,ab, ba\in \rad^3(A)$, which shows that 
	$\rad^2(A)\subseteq \rad^3(A)$. By the Nakayama lemma it follows that $\rad^2(A)=\{0\}$, so $I=(a^2,b^2,ab,ba)$.
	
	In case~\eqref{case 2} we have $a^2\in \rad^3(A)$ and therefore
	$$
	\rad^3(A)\ni \delta_1(a^2) = ab+ba+ 2w_1a^2+r_1(a)a +ar_1(a),
	$$
	hence $ab+ba\in \rad^3(A)$, which implies skew-commutativity mod $\rad^3(A)$. It follows that 
	$a^3, a^2b, ab^2, b^3$ map to generators of the $k$-vector space $\rad^3(A)/\rad^4(A)$. But since $a^2,b^2\in\rad^3(A)$ we have  $a^3, a^2b, ab^2, b^3\in \rad^4(A)$, which
	then implies $\rad^3(A)=\rad^4(A)$. The Nakayama lemma now yields $\rad^3(A)=\{0\}$, which implies $I \supseteq (a^2, b^2, ab+ba)$. But $k\langle a,b\rangle / (a^2, b^2, ab+ba)$ has socle $ab$, so if $I$ properly contains $(a^2, b^2, ab+ba)$, then $I=(a^2,b^2,ab,ba)$. Either way our claim is true.
\end{proof}

\subsection{Reduction to $\rad^3(A)=0$}
\label{sec:reduction}
The next step is a reduction to the case of algebras whose radical cubes to zero.

\begin{lem}\label{lemma:red mod rad3}
	Let $A=kQ/I$ be a non-wild finite-dimensional algebra and let $C=((a_1,b_1),\ldots,(a_n,b_n))$ be a maximal Kronecker chain. If all of the following hold:
	\begin{enumerate}
		\item \label{eq:conddd-1}For any arrow $c$ in $C$ and any arrow $d$ not in $C$ we have $cd,dc\in \rad^3(A)$.
		\item \label{eq:conddd-2}$a_ia_{i+1}, b_ib_{i+1}, a_ib_{i+1}+b_ia_{i+1}\in \rad^3(A)$ for all $1\leq i <n$.
		\item \label{eq:conddd-3}If $s(a_1)=t(a_n)$, then $a_na_1, b_nb_1, a_nb_1+b_na_1\in \rad^3(A)$.
	\end{enumerate}
	then the relations \ref{cond:S1}--\ref{cond:S3} hold in $C$.
\end{lem}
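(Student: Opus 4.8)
The plan is to show that, under the hypotheses, the length-two products in question do not merely lie in $\rad^3(A)$ but actually vanish, so that \ref{cond:S1}--\ref{cond:S3} hold in $A=kQ/I$ itself (with the identity base change). To this end I would introduce the two-sided ideal $L\subseteq A$ generated by all the products appearing in the three conditions: the products $cd$ and $dc$ with $c$ an arrow of $C$ and $d$ an arrow not in $C$ (by Lemma~\ref{rem:shape} the only ones that can be nonzero are $a_nc,b_nc$ with $s(c)=t(a_n)$ and $da_1,db_1$ with $t(d)=s(a_1)$, in the non-cyclic case), together with $a_ia_{i+1},b_ib_{i+1},a_ib_{i+1}+b_ia_{i+1}$ for $1\le i<n$ and, when $s(a_1)=t(a_n)$, also $a_na_1,b_nb_1,a_nb_1+b_na_1$. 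The hypotheses say precisely that every generator of $L$ lies in $\rad^3(A)$, so $L\subseteq\rad^3(A)$; the whole lemma then reduces to proving $L=0$.

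To prove $L=0$ I would descend through the radical filtration: it is enough to show that $L\subseteq\rad^k(A)$ implies $L\subseteq\rad^{k+1}(A)$ for every $k\ge 3$, because $A$ is finite-dimensional and so $\bigcap_k\rad^k(A)=0$. Assume $L\subseteq\rad^k(A)$. Each generator $g$ of $L$ lies in a single summand $e_xAe_y$ and in $\rad^k(A)$, so it suffices to show $e_x\rad^k(A)e_y\subseteq\rad^{k+1}(A)$ for the finitely many relevant pairs $(x,y)$. This is where Lemma~\ref{rem:shape} does the work: for $1\le i\le n$ the only arrows with source $s(a_i)$ are $a_i,b_i$, and the only arrows with target $t(a_i)$ are $a_i,b_i$; the only vertices that may meet further arrows are $s(a_1)$, which can receive ``entry'' arrows, and $t(a_n)$, which can emit ``exit'' arrows (and only in the non-cyclic case). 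Hence any path of length $\ge k\ge 3$ representing an element of $e_x\rad^k(A)e_y$ must begin or end with a two-arrow subpath of a very restricted form --- either a product $c_ic_{i+1}$ of consecutive chain arrows, or a product of a chain arrow with an exit or entry arrow. For the latter that two-arrow subpath already lies in $L$, so the whole path lies in $L\cdot\rad^{k-2}(A)$ or $\rad^{k-2}(A)\cdot L$; for the former, using the three relations among $a_ia_{i+1},b_ib_{i+1},a_ib_{i+1}+b_ia_{i+1}$ to rewrite $c_ic_{i+1}$ modulo $L$, and then the analogous relations to rewrite $c_{i+1}c_{i+2}$, exhibits the path as an element of $L\cdot\rad^{k-2}(A)+\rad^{k-2}(A)\cdot L+\rad^{k+1}(A)$ as well. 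Since $L\subseteq\rad^3(A)$, all of these lie in $\rad^{k+1}(A)$, which closes the induction. (Whenever there is simply no path of length $\ge k$ between $x$ and $y$ --- as happens, e.g., for the interior relations in the linear case if $Q$ contains no cycle reaching the relevant vertex --- the inclusion is vacuous.)

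For exposition I would split into the three shapes of maximal Kronecker chain allowed by Lemma~\ref{rem:shape}. If $C$ is a double loop ($A$ local) or a cyclic ``doubled $\widetilde{A}_n$'', then $C$ is a whole connected component, we may take $Q$ to be just the chain quiver, and the rewriting above in fact applies to \emph{every} path of length $\ge k$, forcing $\rad^k(A)=\rad^{k+1}(A)$ for all $k\ge 3$ and hence $\rad^3(A)=0$; then \ref{cond:S2}--\ref{cond:S3} are immediate and \ref{cond:S1} is vacuous. The genuinely delicate case is the linear ``doubled $A_n$'', where $A$ may be large and $\rad^3(A)\ne 0$: one must rule out that a would-be nonzero element of $L$ is represented by some long path, and the awkward sub-case is a path running from $s(a_{n-1})$ out through an exit arrow at $t(a_n)$, through the rest of $Q$, back in through an entry arrow at $s(a_1)$, and then along the whole chain to $t(a_n)$. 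Checking that such paths --- and the skew-commutativity correction terms thrown off by the rewriting --- still land in $\rad^{k+1}(A)$ is the main technical point; it works because such a path must contain one of the generators of $L$ (possibly after a single use of a relation $a_jb_{j+1}+b_ja_{j+1}$) as a two-arrow subpath.
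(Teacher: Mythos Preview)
Your argument is correct, and at heart it rests on the same computation as the paper's: any length-three path that begins (or ends) with a chain arrow lies in $\rad(A)\cdot L + L\cdot \rad(A)$, hence in $\rad^4(A)$. The packaging, however, is different. You introduce the two-sided ideal $L$ generated by the would-be relations, observe $L\subseteq\rad^3(A)$, and then push $L$ down the radical filtration by showing $e_x\rad^k(A)e_y\subseteq\rad^{k+1}(A)$ for each pair $(x,y)$ carrying a generator. The paper instead exploits the observation once and for all: from ``every length-three path through a chain arrow lies in $\rad^4(A)$'' it deduces, using that $a_i,b_i$ are the only arrows out of $s(a_i)$, that
\[
a_i\rad^2(A)+b_i\rad^2(A)\ \subseteq\ e_{s(a_i)}\rad^4(A)\ =\ a_i\rad^3(A)+b_i\rad^3(A)\ =\ \big(a_i\rad^2(A)+b_i\rad^2(A)\big)\cdot\rad(A),
\]
and then a single application of Nakayama kills $a_i\rad^2(A)+b_i\rad^2(A)$. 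The dual argument on the left handles $\rad(A)a_1+\rad(A)b_1$ and gives \ref{cond:S1}. Your induction is effectively an unrolling of this Nakayama step, so you end up doing the path bookkeeping at every stage rather than once; the paper's formulation is shorter and avoids the ``awkward sub-case'' you single out, since all of that is absorbed into the inclusion $a_i\rad^2(A)\subseteq\rad^4(A)$. Either way the content is the same.
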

\begin{proof}
	Condition \eqref{eq:conddd-2} immediately implies that 
	\begin{equation}
	\label{eq:paths-1}
	a_ia_{i+1}a_{i+2}, a_ia_{i+1}b_{i+2}, b_ib_{i+1}a_{i+2}, b_ib_{i+1}b_{i+2}, a_ib_{i+1}b_{i+2}, b_ia_{i+1}a_{i+2} \in \rad^4(A),
\end{equation}
for $1\leq i \leq  n-2$. Similarly,
\begin{equation}
\label{eq:paths-2}
a_ib_{i+1}a_{i+2} + b_ia_{i+1}a_{i+2}, b_ia_{i+1}b_{i+2} + a_ib_{i+1}b_{i+2}\in \rad^4(A),
\end{equation}
from which we deduce that all paths of length three involving only arrows in $C$ are actually contained in $\rad^4(A)$. The assumptions also imply that any path of length three involving at least one arrow in $C$ and at least one arrow not in $C$ is contained in $\rad^4(A)$. It follows that any path of length three involving at least one arrow in $C$ is contained in $\rad^4(A)$. 

	As $kQ^s$ needs to be non-wild, it follows by Lemma \ref{rem:shape} that there are no arrows other than $a_i$ and $b_i$ emanating from the vertex $s(a_i)=s(b_i)$. Hence, the fact that $a_i\rad^2(A)\subseteq \rad^4(A)$ (which we have just demonstrated) implies that $a_i \rad^2(A) \subseteq a_i \rad^3(A)+b_i\rad^3(A)$. The same is true with $a_i$ replaced by $b_i$. Hence $a_i\rad^2(A) + b_i\rad^2(A) \subseteq a_i\rad^3(A) + b_i\rad^3(A)$. Now the Nakayama lemma implies
	$a_i\rad^2(A) + b_i\rad^2(A)=\{0\}$, which shows that \ref{cond:S2} and \ref{cond:S3} follow from conditions \eqref{eq:conddd-2} and \eqref{eq:conddd-3}, as these elements are contained in $a_i\rad^2(A) + b_i\rad^2(A)$ and 
	$a_n\rad^2(A) + b_n\rad^2(A)$, respectively, rather than just $\rad^3(A)$.
	
	As for condition \ref{cond:S1}, note that as $kQ^s$ is non-wild, for every $i$, $a_i$ and $b_i$ are the only arrows emanating 
	from $s(a_i)=s(b_i)$, and the only arrows pointing to $t(a_i)=t(b_i)$. So condition \ref{cond:S1} really only concerns $i=1$ and $i=n$. If $t(a_n)=s(a_1)$, then $C$ is actually an entire connected component of $Q$, thus rendering condition \ref{cond:S1} vacuous. If $t(a_n)\neq s(a_1)$, then condition \ref{cond:S1} can be reformulated as saying $a_n\rad(A) =b_n \rad(A)=\rad(A)a_1=\rad(A)b_1= \{0\}$. By condition \eqref{eq:conddd-1} we have $a_n \rad(A)+b_n\rad(A) \subseteq a_n\rad^2(A)+b_n\rad^2(A)$, which implies 
	 $a_n \rad(A)+b_n\rad(A)=\{0\}$. In the same vein we have  $\rad(A)a_1+\rad(A)b_1 \subseteq \rad^2(A)a_1+\rad^2(A)b_1$, which implies 
	 $\rad(A)a_1+\rad(A)b_1=\{0\}$.
\end{proof}

\begin{cor}\label{cor: reduction to rad3 zero}
	Theorem~\ref{thm:kronecker main} holds for a non-wild finite-dimensional $k$-algebra $A=kQ/I$ if it holds for $A/\rad^3(A)$.
\end{cor}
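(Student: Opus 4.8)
The plan is to establish Corollary~\ref{cor: reduction to rad3 zero} by carefully transporting both statements of Theorem~\ref{thm:kronecker main} along the quotient map $A \twoheadrightarrow A/\rad^3(A)$. First I would record the key compatibility: for arrows $a,b$ forming a double-arrow in $Q$, the map $\Delta_A(a,b)$ factors through $\HHH^1_{\rad}(A/\rad^2(A))$ by its very definition (Proposition~\ref{prop: Delta}), and since $\rad^2(A)/\rad^3(A) = \rad^2(A/\rad^3(A))$, the map $\Delta_{A/\rad^3(A)}(a,b)$ has the \emph{same} target and is built from the \emph{same} data. More precisely, the composite $\HHH^1_{\rad}(A) \xrightarrow{\pi} \HHH^1_{\rad}(A/\rad^3(A))$ (a piece of the iterated map $\pi$ from Proposition~\ref{prop:def-pin}) satisfies $\Delta_A(a,b) = \Delta_{A/\rad^3(A)}(a,b) \circ \pi$, because both sides further factor through $\HHH^1_{\rad}(A/\rad^2(A))$ identically. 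Consequently $\Delta_A(a,b)$ is surjective if and only if $\Delta_{A/\rad^3(A)}(a,b)$ is surjective (for surjectivity only the image matters, and $\im \Delta_A(a,b) \subseteq \im \Delta_{A/\rad^3(A)}(a,b)$ with equality forced since $\pi$'s image is all that $\Delta_{A/\rad^3(A)}(a,b)$ sees after precomposition). This immediately gives that ``$C$ is surjective in $A$'' $\iff$ ``$C$ is surjective in $A/\rad^3(A)$'', and likewise transports statement~\eqref{statement-2}: conjugacy of the $\Delta_A(a_i,b_i)$ is equivalent to conjugacy of the $\Delta_{A/\rad^3(A)}(a_i,b_i)$.

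Next I would handle the equivalence with standard relations. One must check that a maximal Kronecker chain $C$ in $A$ is still a maximal Kronecker chain in $A/\rad^3(A)$: conditions (a), (b), (d) of Definition~\ref{def:maxkron} only involve the quiver and distinctness of arrows, which are unchanged; condition \eqref{Kc:C} involves whether some length-two path lies in the ideal, and passing to $A/\rad^3(A)$ can only enlarge the defining ideal among length-two elements if $\rad^2$ were killed — but $\rad^3$ is killed, not $\rad^2$, so membership of a length-two path in the ideal is unaffected. Maximality is preserved since the quiver does not change. So $C$ is a maximal Kronecker chain in $A/\rad^3(A)$, and the notion of ``standard relations'' for $C$ in $A/\rad^3(A)$ makes sense. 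Then I would invoke Lemma~\ref{lemma:red mod rad3}: its hypotheses \eqref{eq:conddd-1}--\eqref{eq:conddd-3} say exactly that the relevant length-two products lie in $\rad^3(A)$, which is precisely the statement that the corresponding products \emph{vanish} in $A/\rad^3(A)$. Thus if $C$ has standard relations in $A/\rad^3(A)$ — meaning, after an isomorphism by base change of $A/\rad^3(A)$, the relations \ref{cond:S1}--\ref{cond:S3} hold there — one lifts the base-change automorphism of $kQ$ (it is an automorphism of $kQ$ fixing idempotents, so it descends to $A$ as well), and the vanishing of those products in $A/\rad^3(A)$ translates to their membership in $\rad^3(A)$, so Lemma~\ref{lemma:red mod rad3} yields that \ref{cond:S1}--\ref{cond:S3} hold in $A$ after that base change, i.e. $C$ has standard relations in $A$. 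The converse direction (standard relations in $A$ $\Rightarrow$ standard relations in $A/\rad^3(A)$) is immediate, since relations that hold in $A$ hold a fortiori in any quotient.

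Putting these together: assume Theorem~\ref{thm:kronecker main} holds for $A/\rad^3(A)$. Given a maximal Kronecker chain $C$ in $A$, it is also a maximal Kronecker chain in $A/\rad^3(A)$; surjectivity of $C$ in $A$ is equivalent to surjectivity of $C$ in $A/\rad^3(A)$, which by hypothesis is equivalent to $C$ having standard relations in $A/\rad^3(A)$, which we argued is equivalent to $C$ having standard relations in $A$ — establishing \eqref{statement-1} for $A$. For \eqref{statement-2}, if $C$ is surjective in $A$ then it is surjective in $A/\rad^3(A)$, so by hypothesis all $\Delta_{A/\rad^3(A)}(a_i,b_i)$ are conjugate; since $\Delta_A(a_i,b_i) = \Delta_{A/\rad^3(A)}(a_i,b_i)\circ\pi$ with a common $\pi$, conjugacy transfers to the $\Delta_A(a_i,b_i)$.

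The main obstacle I anticipate is the bookkeeping in the ``standard relations'' equivalence: one has to be careful that the base-change automorphism producing standard relations is genuinely an automorphism of $kQ$ (Definition~\ref{def iso by base change} already phrases it at the level of $kQ$, so this should go through) and that Lemma~\ref{lemma:red mod rad3} is applied with its hypotheses verified at exactly the right ideal. The conceptual point — that $\Delta$ only ever sees $A/\rad^2(A)$, hence a fortiori only $A/\rad^3(A)$ — is what makes everything work, and once that is spelled out the rest is formal. A minor subtlety worth a sentence is that ``surjective'' depends only on $\im\Delta$, so one does not need $\pi$ itself to be surjective, only that its image exhausts the source of $\Delta_{A/\rad^3(A)}(a,b)$ up to the ambiguity that $\Delta$ cannot detect; since $\Delta_{A/\rad^3(A)}(a,b)$ factors through $\HHH^1_{\rad}((A/\rad^3(A))/\rad^2) = \HHH^1_{\rad}(A/\rad^2(A))$ and $\pi$ surjects onto a sub-Lie-algebra mapping there, this is automatic.
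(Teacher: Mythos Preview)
Your overall strategy matches the paper's, and your treatment of part~\eqref{statement-2}, of Lemma~\ref{lemma:red mod rad3}, and of the lifting of base-change automorphisms is correct. There is, however, a genuine gap in your argument for part~\eqref{statement-1}.

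You claim that $\Delta_A(a,b)$ is surjective \emph{if and only if} $\Delta_{A/\rad^3(A)}(a,b)$ is surjective, and in particular that $\im \Delta_A(a,b) = \im \Delta_{A/\rad^3(A)}(a,b)$. The forward direction is fine, since $\Delta_A(a,b)=\Delta_{A/\rad^3(A)}(a,b)\circ\pi$. But your justification of the reverse inclusion is circular: yes, both maps factor through $\HHH^1_{\rad}(A/\rad^2(A))$, but there is no reason the images of $\HHH^1_{\rad}(A)\to\HHH^1_{\rad}(A/\rad^2(A))$ and $\HHH^1_{\rad}(A/\rad^3(A))\to\HHH^1_{\rad}(A/\rad^2(A))$ should coincide. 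A derivation of $A/\rad^3(A)$ need not lift to a derivation of $A$ (higher relations in $I$ may obstruct this), and nothing you wrote rules this out. Saying ``$\pi$ surjects onto a sub-Lie-algebra mapping there'' is just restating that the composite factors; it does not compare the two images.

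The paper avoids this problem by \emph{not} attempting a direct equivalence of surjectivity. Instead it closes the loop of implications using Proposition~\ref{prop:trivial if}: from ``$C$ surjective in $A/\rad^3(A)$'' one gets (by the hypothesis on $A/\rad^3(A)$) standard relations in $A/\rad^3(A)$, hence by Lemma~\ref{lemma:red mod rad3} standard relations in $A$, and then Proposition~\ref{prop:trivial if} gives ``$C$ surjective in $A$'' directly. You already have all the other links in this cycle; replacing your flawed direct argument by an appeal to Proposition~\ref{prop:trivial if} fixes the proof.
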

\begin{proof}
	If a maximal Kronecker chain in $A$ is surjective then so is the corresponding maximal Kronecker chain in $A/\rad^3(A)$, as $\Delta_A(a,b)$  factors through $\Delta_{A/\rad^3(A)}(a,b)$ by definition (for any two arrows $a,b$ in $Q$ forming a Kronecker). In particular all $\Delta_A(a_i,b_i)$'s are conjugate if all $\Delta_{A/\rad^3(A)}(a_i,b_i)$'s are conjugate. 
	This takes care of the second part of Theorem~\ref{thm:kronecker main}.
	
	Now note that if $kQ/J_3$ for some admissible ideal $J_3$ is isomorphic by base change to $A/\rad^3(A)$, then, by taking the preimage of $I$ under the automorphism of $kQ$ inducing the isomomorphism between $kQ/J_3$ and $A/\rad^3(A)$, we find an ideal $J$ in $kQ$ such that $A$ is isomorphic by base change to $kQ/J$ and $J_3/J = \rad^3(kQ/J)$.
	
	 Hence, given a maximal Kronecker chain $C$ such that the relations \ref{cond:S1}--\ref{cond:S3} hold in an algebra isomorphic by base change to $A/\rad^3(A)$, it follows by Lemma~\ref{lemma:red mod rad3} 
	  that the relations \ref{cond:S1}--\ref{cond:S3}  hold in an algebra isomorphic by base change to $A$ as well. Conversely, if  \ref{cond:S1}--\ref{cond:S3}  hold in $A$ (up to isomorphism by base change) then they trivially also hold in $A/\rad^3(A)$.
	  
	  It remains to show that if a maximal Kronecker chain $C$ in $A/\rad^3(A)$ is surjective, then so is the corresponding Kronecker chain in $A$. However, the fact that $C$ is surjective in $A/\rad^3(A)$ implies that \ref{cond:S1}--\ref{cond:S3} hold in $A/\rad^3(A)$ up to isomorphism by base change, and therefore in $A$ (as we have seen). The fact that $C$ is surjective in $A$ now follows from Proposition~\ref{prop:trivial if}.
\end{proof}

\subsection{Establishing \ref{cond:S2}--\ref{cond:S3} locally}
\label{sec:locals2s3}
We can now assume $A=kQ/I$ is non-local and $\rad^3(A)=0$. In this section we will  establish the relations \ref{cond:S2}--\ref{cond:S3} ``locally'', that is, for (non-cyclic) Kronecker chains of length two. 
Lemma~\ref{lemma double kronecker} below is the key technical ingredient, and also the main culprit behind the failure of Theorem~\ref{thm:kronecker main} in characteristic two.

\begin{lem}\label{lemma double kronecker}
	Let $A=kQ/I$ be a non-wild finite-dimensional algebra, where 
	\begin{equation}
	Q:
	\begin{tikzcd}
	1 \ar[shift left]{r}{a} \ar[shift right,swap]{r}{b} & 2 \ar[shift left]{r}{c} \ar[shift right,swap]{r}{d} & 3 
	\end{tikzcd}
	\end{equation}
	and $I$ is an admissible ideal. Assume that either $\Delta_A(a,b)$ or $\Delta_A(c,d)$ is surjective. 
	Then $A$  is isomorphic by base change to $kQ/J$, where either $J=(ac, bd, ad+bc)$ or $J=(ad, bc, ac, bd)$.	
\end{lem}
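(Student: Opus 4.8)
The strategy is to mimic the local case (Proposition~\ref{prop:local}), but now with two overlapping Kronecker pairs, using the structure of a derivation realising surjectivity of one of the two maps $\Delta_A(a,b)$, $\Delta_A(c,d)$ together with the non-wildness constraint on the separated quiver. First I would observe that admissibility gives $I \subseteq (ac, ad, bc, bd)$, so that $\rad^2(A)$ is spanned by the images of $ac, ad, bc, bd$, and $\rad^3(A)=0$ automatically (there are no paths of length three in $Q$). By Proposition~\ref{prop: Delta}\eqref{item: formula delta}, surjectivity of (say) $\Delta_A(a,b)$ produces derivations $\delta_H, \delta_E, \delta_F \in \Der_{\rad}(A)$ (mapping idempotents to zero) whose action on $a,b$ is, modulo $\rad^2(A)=\rad^2$, the standard $\mathfrak{sl}_2$-action: e.g. a $\delta$ with $\delta(a) = b + r(a)$, $\delta(b) = r(b)$ and another with $\delta(a)=a+r(a)$, $\delta(b)=-b+r(b)$, plus one more, all with $r(-)\in \rad^2(A)$. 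The action of these $\delta$'s on $c$ and $d$ is a priori only constrained to be in $e_2 A e_3$, i.e. $\delta(c), \delta(d) \in \langle c, d\rangle + \rad^2(A)$; here we do \emph{not} know the $c,d$-part is $\mathfrak{sl}_2$-standard, so I would write $\delta_H(c) = \alpha c + \beta d + \ldots$, etc., with unknown scalars.

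\textbf{Key steps.} The heart of the argument is then to apply these derivations to the relations in $I$ and extract constraints. Since $\rad^2(A)$ is at most $4$-dimensional and each $\delta$ preserves it (and kills $\rad^3(A)=0$), the induced action of the $\mathfrak{sl}_2$ on $\rad^2(A) = e_1 A e_3$ (spanned by $\overline{ac}, \overline{ad}, \overline{bc}, \overline{bd}$) is an $\mathfrak{sl}_2$-representation. I would compute, using $\delta(ac) = \delta(a)c + a\delta(c)$ etc., how the chosen $\delta$'s act on the four products. The subspace $I \cap e_1Ae_3 \subseteq \langle \overline{ac},\overline{ad},\overline{bc},\overline{bd}\rangle$ must be an $\mathfrak{sl}_2$-subrepresentation (since each $\delta$ stabilises $I$). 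Now the point is that $\langle \overline{ac},\overline{ad},\overline{bc},\overline{bd}\rangle$, as a module for the $\mathfrak{sl}_2$ generated by the $a,b$-action, decomposes (in characteristic $\neq 2$) into $V_2 \oplus V_1 \oplus V_1$ or similar small pieces — one copy of the adjoint/symmetric-square-type piece $\langle \overline{ac}, \overline{ad}+\overline{bc}, \overline{bd}\rangle$ and a trivial piece $\langle \overline{ad}-\overline{bc}\rangle$, at least after we also diagonalise the (unknown) $c,d$-action. The submodules of $V_2 \oplus (\text{trivial})$ are severely limited: $0$, the trivial line, the $3$-dimensional $V_2$, or their sum. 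Matching $I \cap e_1Ae_3$ against this list, and then using the Nakayama lemma (as in Proposition~\ref{prop:local}: if $\rad^2 \subseteq \rad^3 = 0$ forces it to vanish, but if part of $\rad^2$ survives we must retain exactly the complementary relations), yields that $I \cap e_1Ae_3$ is either all of $e_1Ae_3$ (giving $J = (ac,ad,bc,bd)$) or exactly the $3$-dimensional piece $\langle ac, ad+bc, bd\rangle$ (giving $J = (ac, bd, ad+bc)$), up to a base change $T_X(a,b)$ and/or $T_X(c,d)$ to put the decomposition in standard form. Proposition~\ref{prop:delta base change} guarantees such base changes do not destroy surjectivity.

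\textbf{Symmetry and the main obstacle.} The case where it is $\Delta_A(c,d)$ rather than $\Delta_A(a,b)$ that is surjective is handled by the evident left-right symmetry of the quiver $Q$ (reversing all arrows), so there is no real loss of generality. I expect the main obstacle to be bookkeeping the \emph{unknown} $c,d$-component of the derivations: a priori each $\delta \in \Der_{\rad}(A)$ with prescribed $a,b$-action can act on $\langle c,d\rangle$ by an arbitrary $2\times 2$ matrix (up to $\rad^2$), so the $\mathfrak{sl}_2$ one sees acting on $\rad^2(A)$ is not literally the standard one until one has chosen the three generators $\delta_H, \delta_E, \delta_F$ compatibly and possibly adjusted by a base change in $\langle c, d\rangle$ and by inner derivations. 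Making sure these adjustments can be done simultaneously without spoiling the $a,b$-normalisation — and checking the characteristic-two exclusion genuinely enters precisely when the module $\langle \overline{ad}+\overline{bc}\rangle$ versus $\langle \overline{ad}-\overline{bc}\rangle$ distinction collapses — is the delicate part; the rest is the same Nakayama/representation-theory bookkeeping as in Proposition~\ref{prop:local}.
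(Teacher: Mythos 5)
Your approach is genuinely different from the paper's, which proceeds by a direct case analysis of the possible two-dimensional piece of $I$ (after first noting $\dim_k(e_1 A e_3)\le 2$, i.e.\ $\dim_k I\ge 2$), then repeatedly applies a derivation with $\delta(a)=wa+b$ to whatever relations are at hand to manufacture the missing ones, with an ad hoc Jordan-form base change and an explicit wildness argument ($kQ/(ac,bc)$ is wild because it surjects onto $kQ/(c)$) closing off the stray cases. Your representation-theoretic idea is attractive, but as sketched it has a real gap at the point where you declare $\langle \overline{ac},\overline{ad},\overline{bc},\overline{bd}\rangle$ to be an $\mathfrak{sl}_2$-module and $I$ a submodule.

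There are two problems with that step. First, surjectivity of $\Delta_A(a,b)$ only gives three derivations $\delta_H,\delta_E,\delta_F$ whose $a,b$-components project to $H,E,F$; it does \emph{not} give a Lie subalgebra of $\Der_{\rad}(A)$ isomorphic to $\mathfrak{sl}_2$. The bracket $[\delta_H,\delta_E]-2\delta_E$ lies in the kernel of $\Delta_A(a,b)$, so without a Levi-type section — which is not automatic over a field of positive characteristic, and the lemma is not restricted to characteristic zero — you cannot assert that the space of length-two paths carries an $\mathfrak{sl}_2$-action. Second, even granting an $\mathfrak{sl}_2$ inside $\Der_{\rad}(A)$, the module structure on $e_1(kQ)e_3 \cong \langle a,b\rangle\otimes\langle c,d\rangle$ depends on the (undetermined) action on $\langle c,d\rangle$. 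If, say, that action were trivial, the module would be $V\oplus V$ (two copies of the standard representation), which has a one-parameter family of $2$-dimensional submodules; your list ``$0$, trivial line, $V_2$, all'' is correct only for the decomposition $\Sym^2 V\oplus\Lambda^2 V\cong V_2\oplus V_0$, and establishing that \emph{this} is the decomposition that actually occurs is precisely the content you are trying to prove. (Also, $V_2\oplus V_1\oplus V_1$ would have dimension $7$, not $4$; you mean $V_2\oplus V_0$.) The $2$-dimensional possibilities for $I$ are not excluded by any Nakayama argument — in this quiver $\rad^3(kQ)=0$ outright, so there is nothing for Nakayama to grind against, unlike in Proposition~\ref{prop:local}. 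They have to be ruled out by explicit wildness considerations (as the paper does: $kQ/(ac,bc)$ is wild, etc.) or by a direct derivation computation, both of which are missing from your sketch. Until these two points are repaired — producing or circumventing the $\mathfrak{sl}_2$-section, and pinning down the $c,d$-action before the module decomposition can be invoked — the representation-theoretic shortcut does not yet go through.
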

\begin{proof}
	We will assume without loss of generality that $\Delta_A(a,b)$ is surjective. The case 
	where $\Delta_A(c,d)$ is surjective is analogous, and technically reduces to the case where  $\Delta_A(a,b)$ is surjective by passing to the opposite algebra (see \cite[Ch. XIX, Lemma 1.4]{simson2006elements}).
	
	As a first step, note that $\dim_k (e_1 A e_3) \leq 2$, since otherwise $(e_1+e_3)A(e_1+e_3)=kK_3$ or $kK_4$ would be wild. We consider several cases. In each case we will successively apply isomorphisms by base change and without loss of generality replace $A$ by the isomorphic algebra, until we either obtain that $A$ is as claimed, or at least $I \supseteq (ac, bd)$ (which is the case we will then deal with at the end).
	\begin{enumerate}
		\item
		Assume $y_1\cdot ac+y_2\cdot bc\in I$ for $(0,0)\neq (y_1,y_2)\in k^2$ and 
		$y_3\cdot ad+y_4\cdot bd\in I$ for $(0,0)\neq (y_3,y_4)\in k^2$. 
		\begin{enumerate}
			\item If $(y_1,y_2)$ and $(y_3,y_4)$ are linearly independent, then 
			set 
			$$
				Y = \left( \begin{array}{cc} y_1&y_2 \\ y_3&y_4 \end{array}\right)
			$$
			and define $J=T_Y(a,b)^{-1}(I)$.
			Then $J$ contains $ac$ and $bd$, and we may replace $I$ by $J$. 
			
			\item If $(y_1,y_2)=z\cdot (y_3,y_4)$ for some $z\in k$, then we may complete $(y_1,y_2)$
			by another row $(y_5, y_6)$ to form an invertible $2\times 2$-matrix, and then proceed as before to obtain that $A$ is isomorphic to $kQ/J$, where $J$ contains $ac$ and $ad$. Now replace $A$ by $kQ/J$. Since $\Delta_A(a,b)$ is surjective, by Proposition \ref{prop: Delta}, there is a derivation $\delta$ on $A$ such that $\delta(a)=wa+b$ for some $w\in k$ (by Lemma \ref{remark idempotents map to zero}, we can assume the $\rad^2(A)$ term is zero). By Lemma \ref{remark idempotents map to zero}, we can assume $\delta(d)=z_1c+z_2 d$ for certain $z_1,z_2\in k$ and therefore
	$$0=\delta(ad)=(wa+b)d + a(z_1c+z_2 d)=bd$$
		Hence $bd\in I$. Similarly we obtain $bc \in I$. Hence $I$ is equal to $(ac,ad,bc,bd)$. 
		\end{enumerate}
	
		\item Now assume that either $ac$ and $bc$ or $ad$ and $bd$ are linearly independent. By applying an automorphism we may assume that $ad$ and $bd$ are linearly independent, and therefore $I$ contains the entries of
		$$
		\left(\begin{array}{cccc}
		1&0&x_1&x_2 \\
		0&1&x_3&x_4
		\end{array}\right) \cdot (ac,bc,ad,bd)^\top = 
		\left(\begin{array}{c|c}
		{\mathrm{Id}}_2& X
		\end{array}\right) \cdot (ac,bc,ad,bd)^\top
		$$ 
		If $Y\in \operatorname{GL}_2(k)$ then one verifies that $J=T_Y^{-1}(I)$ contains the entries of
		$$
		\left(\begin{array}{c|c}
		{\mathrm{Id}}_2& YXY^{-1}
		\end{array}\right) \cdot (ac,bc,ad,bd)^\top
		$$
		and we may replace $I$ by $J$. That is, we may assume that $X$ is in Jordan normal form. Then either 
		$I\supseteq (ac+y_1ad, bc + y_2 bd)$ or $I\supseteq (ac+y_1ad, bc  +ad+ y_1 bd)$ for $y_1,y_2\in k$.
		
		\begin{enumerate}
			
			\item\label{eq:stepa}
			In the first case, if $y_1\neq y_2$ consider the algebra homomorphism which sends $a$ to $a$, $b$ to $b$, 
			$c$ to $c+y_1d$ and $d$ to $c+y_2 d$. The preimage $J$ of $I$ under this homomorphism contains $ac$ and $bd$, and we may replace $I$ by $J$. 
			
			Similarly, if $y_1=y_2$ we can replace $I$ by a $J$ which contains 
			$ac$ and $bc$. As $kQ/(ac,bc)$ is wild (since $kQ/(ac,bc,c) \cong kQ/(c)$ is wild) $I$ must also contain an element of the form 
			$x_1ad+x_2bd$ for $(0,0) \neq (x_1,x_2)\in k^2$. By applying an automorphism we may assume without loss of generality that 
			$I\supseteq (ac, bc, ad)$.
			As we assume $\Delta_A(a,b)$ surjective there is a derivation  $\delta$
			such that $\delta(a)=wa+b$ for some $w\in k$. Hence,  for certain $z_1,z_2\in k$,
			$$0=\delta(ad)=(wa+b)d + a(z_1c+z_2 d)=bd$$
			which implies that $bd\in I$, i.e. $I=(ac,bc,ad,bd)$.
			
			\item
			In the second case we can replace $I$ by $J\supseteq (ac,bc+ad)$.
			As we assume $\Delta_A(a,b)$ surjective there is a derivation  $\delta$
			such that $\delta(a)=wa+b$ and $\delta(b)=a+wb$ for some $w\in k$. Hence,  for certain $z_1,z_2,z_3,z_4\in k$,
			\begin{align}
			0&=\delta(ad+bc)\\
			&=(wa+b)d + a(z_3c+z_4 d)+ (a+wb)c +b(z_1c+z_2d)\\
			&=(z_2+1) bd + z_1bc+z_4ad \\
			&= (z_2+1) bd + (z_4-z_1) ad
			\end{align}
			and 
			$$
			0=\delta(ac) = (wa+b)c +a(z_1c+z_2d)= bc + z_2 ad=(z_2-1) ad 
			$$
			If $z_2\neq 1$ we get $I\supseteq (ac,bc,bd)$, and we have already seen in step \eqref{eq:stepa} that our assumptions then imply $I\supseteq (ac,ad,bc,bd)$. If $z_2=1$, then we get $bd-z\cdot ad=(b-za)d\in I$ for some $z\in k$ (and this is where the assumption $\operatorname{char} (k)\neq 2$ is crucial).  After applying an automorphism we may assume that $I$ contains  $ac$, $bd$ and $ad+bc$.
		\end{enumerate}
	\end{enumerate}

	In either case we may now assume that $I \supseteq (ac, bd)$. Since $\Delta_A(a,b)$ is surjective there is a derivation $\delta$ on $A$ such that $\delta(a)=wa+b$ for some $w\in k$. Then $\delta(c)=z_1c+z_2 d$ for certain $z_1,z_2\in k$ and therefore
$$0=\delta(ac)=(wa+b)c + a(z_1c+z_2 d)=bc+z_1ad$$
Hence $bc+z_1ad\in I$. In case $z_1\neq 0$ we may then apply an automorphism, and then we may assume $I \supseteq (ac, bd, ad+bc)$. If $z_1=0$ then we have seen already that $I=(ac,bc,ad,bd)$ follows.
\end{proof}

Note that if $J=(ac, bd, ad+bc)$ in the previous lemma, then  $\Delta_{kQ/J}(a,b)=\Delta_{kQ/J}(c,d)$, as the following proposition shows. It will be quite important later on that there is a kind of converse to this (see Proposition~\ref{prop: delta normalises relations}). 

\begin{prop}\label{prop: deltas are equal}
	Let $A=kQ/I$ be a non-wild finite-dimensional algebra, and let $a,b,c,d$ be distinct arrows in $Q$
	such that $s(a)=s(b)$, $t(a)=t(b)=s(c)=s(d)$ and $t(c)=t(d)$. If 
	$ac, bd, ad+bc \in I$, but $ad\not \in I$, then $\Delta_A(a,b)=\Delta_A(c,d)$.
\end{prop}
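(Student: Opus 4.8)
The statement compares $\Delta_A(a,b)$ with $\Delta_A(c,d)$, so the natural strategy is to unwind both via Proposition~\ref{prop: Delta}\eqref{item: formula delta}: for any radical-preserving derivation $\delta$ killing the standard idempotents, $\Delta_A(a,b)(\delta)$ is the unique traceless matrix $M$ with $(\delta(a),\delta(b))^\top = (M + w\,\id_2)(a,b)^\top$ modulo $\rad^2(A)$, and similarly for $\Delta_A(c,d)(\delta)$ using $(c,d)$. So I want to show that the ``action matrix'' of $\delta$ on the span of $a,b$ (mod scalars) equals the action matrix on the span of $c,d$. The key point is that the relations $ac,bd,ad+bc\in I$ (together with $ad\notin I$) force the derivation to act compatibly on the two pairs.

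First I would fix $\delta\in\Der_{\rad}(A)$ with $\delta$ vanishing on all standard idempotents (legitimate by Lemma~\ref{remark idempotents map to zero}), and write $\delta(a)=\alpha_{11}a+\alpha_{12}b+r(a)$, $\delta(b)=\alpha_{21}a+\alpha_{22}b+r(b)$, $\delta(c)=\gamma_{11}c+\gamma_{12}d+r(c)$, $\delta(d)=\gamma_{21}c+\gamma_{22}d+r(d)$ with all $r(\cdot)\in\rad^2(A)$; note $\delta(a)\in e_{s(a)}Ae_{t(a)}$ etc.\ so these are the only possible linear terms, and by Proposition~\ref{prop: Delta} we have $\Delta_A(a,b)(\delta)=\bigl(\begin{smallmatrix}\alpha_{11}-w_1 & \alpha_{12}\\ \alpha_{21} & \alpha_{22}-w_1\end{smallmatrix}\bigr)$ with $w_1=\tfrac12(\alpha_{11}+\alpha_{22})$, and similarly for $(c,d)$. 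Now apply $\delta$ to the three relations. From $ac\in I$, $0=\delta(ac)=\delta(a)c+a\delta(c)=(\alpha_{11}+\gamma_{11})ac+\alpha_{12}bc+\gamma_{12}ad \pmod{\rad^3}$; since $ac\in I$ and $ad+bc\in I$ this reduces mod $I$ to $(\gamma_{12}-\alpha_{12})\,ad\in I/(\text{higher})$. Because $ad\notin I$ (and the relevant Hom-space $e_{s(a)}(A/\text{stuff})e_{t(c)}$ is spanned by $ad$ up to the imposed relations — here I'd use Lemma~\ref{rem:shape}/non-wildness to control it, exactly as in the proof of Proposition~\ref{prop:trivial if}), we get $\gamma_{12}=\alpha_{12}$. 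Symmetrically, $\delta(bd)=0$ gives $(\gamma_{21}-\alpha_{21})\,ad\in I$ hence $\gamma_{21}=\alpha_{21}$ (using $bd\in I$ and $bc\equiv -ad$). Finally $\delta(ad+bc)=0$, expanded the same way and using $ac,bd\in I$ and $bc\equiv-ad$, yields a multiple of $ad$ that forces $\gamma_{11}-\gamma_{22}=\alpha_{11}-\alpha_{22}$. Together with the traceless normalisation this pins down $\Delta_A(c,d)(\delta)=\Delta_A(a,b)(\delta)$ for every $\delta$, hence $\Delta_A(a,b)=\Delta_A(c,d)$.

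**Main obstacle.** The genuinely delicate step is the passage ``$\lambda\cdot ad\in I\Rightarrow\lambda=0$'': one must be sure that, after imposing $ac,bd,ad+bc\in I$, the element $ad$ together with scalar multiples is not accidentally forced into $I$ by some longer relation or by a relation involving other arrows through the vertices $s(a)$ or $t(d)$. This is precisely where non-wildness enters: by Lemma~\ref{rem:shape} there are no arrows other than $a,b$ out of $s(a)=s(b)$ and none other than $c,d$ into $t(c)=t(d)$, so all degree-two paths from $s(a)$ to $t(c)$ are $k$-linear combinations of $ac,bc,ad,bd$, which modulo the imposed relations collapse to $k\cdot ad$; the hypothesis $ad\notin I$ then says this space is genuinely one-dimensional, so vanishing of a scalar multiple is equivalent to vanishing of the scalar. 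I would state this reduction carefully (and note that $\rad^3$-terms in $\delta$ of the relations land in $\rad^3(A)$, i.e.\ among paths of length $\geq 3$, and so do not interfere with the length-two computation — or alternatively pass to $A/\rad^3(A)$ as licensed by Corollary~\ref{cor: reduction to rad3 zero} to make this automatic). Once that book-keeping is in place, the three computations above are routine and give the result.
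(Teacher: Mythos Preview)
Your proposal is correct and follows essentially the same approach as the paper: apply a radical-preserving derivation (normalised to kill the idempotents) to each of the relations $ac$, $bd$, $ad+bc$, reduce using those same relations to a scalar multiple of $ad$, and use $ad\neq 0$ to force the three coefficient identities $\alpha_{12}=\gamma_{12}$, $\alpha_{21}=\gamma_{21}$, $\alpha_{11}-\alpha_{22}=\gamma_{11}-\gamma_{22}$. Two small remarks: the arrow constraints you need (that $a,b$ are the only arrows out of $s(a)$ and $c,d$ the only arrows into $t(c)$) come directly from $\Delta_A(a,b)$ and $\Delta_A(c,d)$ being defined, i.e.\ from the corresponding $K_2$'s being connected components of $Q^s$, rather than from Lemma~\ref{rem:shape}; and the paper simply writes $\delta(a)=x_1a+x_2b$ etc.\ and argues directly in $A$ (so your care with $\rad^2$-terms is extra, not missing).
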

\begin{proof}
	Our assumption that $A$ is not wild implies that $a$ and $b$ are the only arrows in $Q$ from $s(a)$ to $t(a)$, and the analogous statement is true for $c$ and $d$. Hence, if $\delta$ is a derivation of $A$, then there are 
	$x_1,\ldots,x_4,y_1,\ldots,y_4 \in k$ such that 
	$$
	\delta(a)=x_1a+x_2b\quad \delta(b)=x_3a+x_4b \quad \delta(c)=y_1c+y_2d \quad \delta(d)=y_3c+y_4d
	$$
	Note that 
	\begin{equation}
	\Delta_A(a,b)(\delta)=\frac{1}{2}(x_1-x_4)H+x_3E+x_2F, \quad
	\Delta_A(c,d)(\delta)=\frac{1}{2}(y_1-y_4)H+y_3E+y_2F
	\end{equation}
	Now $0=\delta(ac)=x_2bc + y_2 ad=(x_2-y_2) bc$, which implies $x_2=y_2$. In the same vein, 
	$0=\delta(bd)=(x_3-y_3) ad$, which implies $x_3=y_3$. Lastly
	$$
	0=\delta(ad+bc)= (x_1+y_4) ad + (x_4+y_1) bc = (x_1-x_4 + y_4-y_1) ad
	$$
	which implies $x_1-x_4=y_1-y_4$. The claim follows. 
\end{proof}

\begin{lem}\label{lemma:rels 2  chain}
	Let $A=kQ/I$ be a non-wild finite-dimensional algebra with $\rad^3(A)=\{0\}$, and assume 
	$C=((a_1,b_1),(a_2,b_2))$ is a surjective Kronecker chain in $A$ with $t(a_2)\neq s(a_1)$ (that is, $C$ is non-circular). Then $A$ is isomorphic by base change to $kQ/J$ for some admissible ideal $J$ such that 
	$$
		a_1a_2, b_1b_2, a_1b_2+b_2a_1 \in J 
	$$  
	Moreover, we have $\Delta_{kQ/J}(a_1,b_1)= \Delta_{kQ/J}(a_2,b_2)$, which implies that $\Delta_A(a_1,b_1)$ and $\Delta_A(a_2,b_2)$ are conjugate.
\end{lem}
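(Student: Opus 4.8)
The plan is to reduce the statement to Lemma~\ref{lemma double kronecker} and then to read off the two claims about $\Delta$ from Propositions~\ref{prop: deltas are equal} and~\ref{prop:delta base change}; by passing to the opposite algebra if necessary we may assume that $\Delta_A(a_1,b_1)$ is surjective. Write $1=s(a_1)$, $2=t(a_1)=s(a_2)$, $3=t(a_2)$. Since $A$ is non-wild and the maps $\Delta_A(a_i,b_i)$ are defined, the separated quiver of $A$ cannot be bigger than expected near these two Kroneckers (cf.\ Proposition~\ref{prop:rad-square}, Lemma~\ref{rem:shape}, and the proof of Proposition~\ref{prop: Delta}): the only arrows out of $1$, respectively into $2$, are $a_1,b_1$, and the only arrows out of $2$, respectively into $3$, are $a_2,b_2$. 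Together with condition~\eqref{Kc:D} and the hypothesis $t(a_2)\neq s(a_1)$ this forces $1,2,3$ to be pairwise distinct and $Q$ to have no arrow $1\to 3$.

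Next I would let $\mathcal N\trianglelefteq kQ$ be the ideal generated by all arrows other than $a_1,b_1,a_2,b_2$, and put $\overline A=kQ/(I+\mathcal N)$. By the previous paragraph $\overline A$ has $kQ'/\overline I$ as a block, where $Q'$ is the quiver $1\rightrightarrows 2\rightrightarrows 3$ (with arrows $a_1,b_1$ then $a_2,b_2$) and $\overline I$ is admissible, the remaining blocks being copies of $k$; one has $\rad^3(kQ'/\overline I)=0$, and since $\rad^3(A)=0$ annihilates all paths of length $\geq 3$, the projection $A\to\overline A$ restricts to isomorphisms $e_iAe_j\xrightarrow{\ \sim\ }e_i\overline Ae_j$ for $(i,j)\in\{(1,2),(2,3),(1,3)\}$ — so any relation among $a_1,b_1,a_2,b_2$ holds in $A$ if and only if it holds in $kQ'/\overline I$. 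A derivation $\delta\in\Der_{\rad}(A)$ with $\delta(e_i)=0$ for all $i$ (which we may assume by Lemma~\ref{remark idempotents map to zero}) sends every arrow outside $\{a_1,b_1,a_2,b_2\}$ into $\mathcal N$, since there is no path using only $a_1,b_1,a_2,b_2$ with the relevant source and target; hence $\delta$ stabilises $\mathcal N$ and descends to $\overline A$. Chasing Proposition~\ref{prop: Delta}\eqref{item: formula delta}, the resulting map $\HHH^1_{\rad}(A)\to\HHH^1_{\rad}(\overline A)=\HHH^1_{\rad}(kQ'/\overline I)$ intertwines $\Delta_A(a_i,b_i)$ with $\Delta_{kQ'/\overline I}(a_i,b_i)$, so $C$ is again a surjective Kronecker chain in $kQ'/\overline I$.

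Now Lemma~\ref{lemma double kronecker} applies to $kQ'/\overline I$: it is isomorphic by base change — via an automorphism $\theta$ which (by inspecting the proof of that lemma) is a composition of elementary maps $T_X$, each acting on $(a_1,b_1)$ or on $(a_2,b_2)$ — to $kQ'/\overline J$ with $\overline J=(a_1a_2,\,b_1b_2,\,a_1b_2+b_1a_2)$ or $\overline J=(a_1b_2,\,b_1a_2,\,a_1a_2,\,b_1b_2)$. The second possibility is excluded by~\eqref{Kc:C}: it says the four length-two paths $a_1a_2,a_1b_2,b_1a_2,b_1b_2$ do not all lie in $I$, hence (by the isomorphisms above) not all in $\overline I$; since $\theta$ preserves their $4$-dimensional span $V$, we get $\dim(\overline J\cap V)=\dim(\overline I\cap V)<4$, so $V\not\subseteq\overline J$. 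Thus $\overline J=(a_1a_2,b_1b_2,a_1b_2+b_1a_2)$, and in particular $a_1b_2\notin\overline J$ (it spans $e_1(kQ'/\overline J)e_3$). To transport this back to $A$, I would perform the same composition of $T_X$'s on $kQ$, fixing every other arrow, obtaining an automorphism $\Theta$ of $kQ$, and set $J=\Theta(I)$; then $A\cong kQ/J$ by base change. As $\Theta^{-1}$ carries each of $a_1a_2,b_1b_2,a_1b_2+b_1a_2$ and $a_1b_2$ into $V$, and membership of an element of $V$ in $I$ agrees with its membership in $\overline I$ by the isomorphisms above, it follows that $a_1a_2,b_1b_2,a_1b_2+b_1a_2\in J$ while $a_1b_2\notin J$; this is the first assertion. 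For the ``moreover'' part, Proposition~\ref{prop: deltas are equal} applies in $kQ/J$ — its hypotheses ($a_1b_2\notin J$, and $a_1,b_1$ resp.\ $a_2,b_2$ being the only arrows between their endpoints) all hold — and yields $\Delta_{kQ/J}(a_1,b_1)=\Delta_{kQ/J}(a_2,b_2)$. Applying Proposition~\ref{prop:delta base change} once for each elementary factor of $\Theta$ then shows $\Delta_A(a_1,b_1)$ is conjugate to $\Delta_{kQ/J}(a_1,b_1)$ and $\Delta_A(a_2,b_2)$ to $\Delta_{kQ/J}(a_2,b_2)$; combined with the previous equality, $\Delta_A(a_1,b_1)$ and $\Delta_A(a_2,b_2)$ are conjugate.

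I expect the reduction in the second paragraph to be the main obstacle: one has to be sure the ambient quiver contributes nothing beyond $Q'$ in a neighbourhood of $C$ (this is exactly where non-wildness enters, through the separated quiver), and that radical-preserving derivations descend to $\overline A$ compatibly with the maps $\Delta$. A secondary, bookkeeping-heavy point — which is where the hypothesis $\rad^3(A)=0$ is genuinely used — is to ensure that ``the relations hold up to base change in the block $kQ'/\overline I$'' really implies ``they hold up to base change in $A$'', and not merely in the quotient $\overline A$.
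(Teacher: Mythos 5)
Your proof is correct and follows essentially the same route as the paper: reduce to the small three-vertex algebra of Lemma~\ref{lemma double kronecker}, use condition~\eqref{Kc:C} to exclude the case where all four length-two paths are relations, and then invoke Proposition~\ref{prop: deltas are equal} together with the transformation law of Proposition~\ref{prop:delta base change}. The only cosmetic difference is in the reduction step: you pass to the quotient $\overline A=kQ/(I+\mathcal N)$, whereas the paper works with the corner algebra $B=eAe/eAe_3Ae_1Ae$ for $e=e_1+e_2+e_3$. Both constructions yield the same algebra $kQ'/\overline I$ once one uses $\rad^3(A)=0$ and the separated-quiver constraints, and both require the same careful check that $e_iAe_j\to e_i\overline Ae_j$ (resp.\ $e_iAe_j\to e_iBe_j$) is an isomorphism for $(i,j)\in\{(1,2),(2,3),(1,3)\}$ — you spell this out a bit more explicitly. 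One small remark: the opening ``by passing to the opposite algebra if necessary we may assume $\Delta_A(a_1,b_1)$ is surjective'' is unnecessary, since Lemma~\ref{lemma double kronecker} already allows either $\Delta_A(a,b)$ or $\Delta_A(c,d)$ to be surjective; invoking the opposite-algebra trick here also incurs a (small, but avoidable) bookkeeping cost of tracking how the maps $\Delta$ behave under $(-)^{\op}$. Also note that for an automorphism of $kQ'$ fixing idempotents and linear on arrows, being a composition $T_{X_1}(a_1,b_1)\circ T_{X_2}(a_2,b_2)$ is automatic (since it must stabilise $\langle a_1,b_1\rangle$ and $\langle a_2,b_2\rangle$), so no ``inspection of the proof of Lemma~\ref{lemma double kronecker}'' is needed there.
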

\begin{proof}
	Let $e=e_1+e_2+e_3$, where $e_1$, $e_2$ and $e_3$ are the standard idempotents in $A$ corresponding to the vertices $s(a_1)$, $t(a_1)$ and $t(a_2)$, respectively. As $A$ is non-wild,
	it follows that $\dim_k (e_1 A e_2) \leq 2$ and $\dim_k (e_2 A e_3) \leq 2$. Combined with Lemma \ref{rem:shape}, this shows the algebra $B = eAe/Ae_3Ae_1A $ has quiver 
	\begin{equation}
	Q':
	\begin{tikzcd}
	1 \ar[shift left]{r}{a_1} \ar[shift right,swap]{r}{b_1} & 2 \ar[shift left]{r}{a_2} \ar[shift right,swap]{r}{b_2} & 3 
	\end{tikzcd}
	\end{equation}
	Note that any derivation of $A$ sending the standard idempotents to zero will stabilise $e_3Ae_1$ and, as a consequence, $Ae_3Ae_1A$.
	So in fact the natural epimorphism $eAe\twoheadrightarrow B$ induces a map $\HHH^1_{\rad}(eAe)\longrightarrow \HHH^1_{\rad}(B)$, and thus
	$\Delta_{eAe}(a_i,b_i)$ is obtained from $\Delta_B(a_i,b_i)$ by precomposing with this map. The map $\Delta_{A}(a_i,b_i)$ factors through $\Delta_{eAe}(a_i,b_i)$ by Proposition~\ref{prop: Delta}.
	 Hence, if either $\Delta_A(a_1,b_1)$ or $\Delta_A(a_2,b_2)$ is surjective (which is the case by assumption), then either $\Delta_B(a_1,b_1)$  or $\Delta_B(a_2,b_2)$ is surjective.
	 
	  As $B$ is non-wild, Lemma~\ref{lemma double kronecker} implies that $B$ is isomorphic by base change to $kQ'/J'$, where $J'\supseteq (a_1a_2, b_1b_2, a_1b_2+b_1a_2)$, and if this inclusion is an equality, then $\Delta_{kQ'/J'}(a_1,b_1)=\Delta_{kQ'/J'}(a_2,b_2)$ by Proposition \ref{prop: deltas are equal}. It follows that $A$ is isomorphic by base change to $kQ/J$ for an admissible ideal $J$ such that $a_1a_2, b_1b_2, a_1b_2+b_1a_2\in e_1(kQ/J)e_3(kQ/J)e_1(kQ/J)e_3 \subseteq \rad^3(kQ/J)=\{0\}$. I. e.  $J\supseteq (a_1a_2, b_1b_2, a_1b_2+b_1a_2)$. If $J'$ in addition contains the relation $a_1b_2$, then so does $J$, contradicting the definition of a Kronecker chain (specifically condition \eqref{Kc:C}). Hence $J'=(a_1a_2, b_1b_2, a_1b_2+b_1a_2)$, which implies that $\Delta_{kQ'/J'}(a_1,b_1)=\Delta_{kQ'/J'}(a_2,b_2)$. As $\Delta_{kQ/J}(a_i,b_i)$ factors through $\Delta_{kQ'/J'}(a_i,b_i)$ for $i=1,2$, it follows that 
	 $\Delta_{kQ/J}(a_1,b_1)=\Delta_{kQ/J}(a_2,b_2)$, as required.
\end{proof}

The second part of Theorem~\ref{thm:kronecker main} (in the radical cubed zero case) follows immediately from the above, with the exception of a circular Kronecker chain of length two. 

\begin{cor}\label{cor: all delta surjective}
	Let $A=kQ/I$ be a non-wild finite-dimensional algebra with $\rad^3(A)=0$. Assume that 
	$C=((a_1,b_1),\ldots,(a_n,b_n))$ is a surjective maximal Kronecker chain in $A$. 
	Assume moreover that either $n>2$ or $t(a_n)\neq s(a_1)$.
	Then all $\Delta_A(a_i, b_i)$ are conjugate, and therefore surjective. 
\end{cor}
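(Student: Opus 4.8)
The plan is to propagate surjectivity of the maps $\Delta_A(a_i,b_i)$ along the chain, one link at a time, using Lemma~\ref{lemma:rels 2  chain}. First note that $n\ge 2$: if $n=1$, then by Lemma~\ref{rem:shape} we would have $t(a_1)=s(a_1)$, contradicting the hypothesis ``$n>2$ or $t(a_n)\neq s(a_1)$''. Next, for each $1\le i<n$ the sublist $C_i:=((a_i,b_i),(a_{i+1},b_{i+1}))$ is again a Kronecker chain in $A$, since conditions (a)--(d) of Definition~\ref{def:maxkron} are inherited from $C$ --- in particular \eqref{Kc:C}, which only constrains consecutive pairs.

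The combinatorial heart of the argument is to check that each $C_i$ is \emph{non-circular}, that is, $t(a_{i+1})\neq s(a_i)$; this is exactly where the hypothesis on $C$ is used. Suppose $t(a_{i+1})=s(a_i)$ for some $1\le i<n$. Since $n>1$, Lemma~\ref{rem:shape} places $C$ in case \eqref{eq:shape2} or \eqref{eq:shape3}, so the sources $s(a_1),\dots,s(a_n)$ are pairwise distinct, and in case \eqref{eq:shape3} they are moreover all distinct from $t(a_n)$. If $i+1<n$, then $t(a_{i+1})=s(a_{i+2})$, giving $s(a_{i+2})=s(a_i)$ with $i+2\le n$ and $i+2\neq i$, which is impossible. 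If $i+1=n$, then $s(a_{n-1})=s(a_i)=t(a_n)$; in case \eqref{eq:shape3} this contradicts the distinctness of $t(a_n)$ from the $s(a_j)$, while in case \eqref{eq:shape2} it forces $s(a_1)=s(a_{n-1})$, hence $n=2$ together with $t(a_n)=s(a_1)$ --- precisely the situation the hypothesis rules out. So all the $C_i$ are non-circular.

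It remains to run the propagation. Since $C$ is surjective, $\Delta_A(a_j,b_j)$ is surjective for some $j$. For $j\le i<n$: if $\Delta_A(a_i,b_i)$ is surjective, then $C_i$ is a non-circular surjective Kronecker chain in $A$ and $\rad^3(A)=0$, so Lemma~\ref{lemma:rels 2  chain} applies and yields that $\Delta_A(a_i,b_i)$ and $\Delta_A(a_{i+1},b_{i+1})$ are conjugate; since this conjugation acts by a linear automorphism of $\mathfrak{sl}_2$, $\Delta_A(a_{i+1},b_{i+1})$ is again surjective. Iterating upward shows every $\Delta_A(a_i,b_i)$ with $i\ge j$ is surjective and conjugate to $\Delta_A(a_j,b_j)$; applying the same argument to $C_{j-1},C_{j-2},\dots$ does the same for $i\le j$. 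Since conjugacy is transitive, all the $\Delta_A(a_i,b_i)$ are pairwise conjugate, and hence all surjective. The only real obstacle is the vertex bookkeeping in the middle paragraph; everything else is a straightforward induction resting on Lemma~\ref{lemma:rels 2  chain}.
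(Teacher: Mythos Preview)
Your proof is correct and follows the same approach as the paper: start at the index where $\Delta_A(a_j,b_j)$ is surjective and propagate conjugacy in both directions by repeatedly applying Lemma~\ref{lemma:rels 2  chain} to consecutive length-two sub-chains. The paper's proof is a one-sentence sketch of exactly this; you simply make explicit the verification (which the paper leaves implicit) that each sub-chain $((a_i,b_i),(a_{i+1},b_{i+1}))$ is non-circular, which is precisely where the hypothesis ``$n>2$ or $t(a_n)\neq s(a_1)$'' enters via Lemma~\ref{rem:shape}.
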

\begin{proof}
	By assumption at least one of the $\Delta_A(a_i, b_i)$ is surjective. Now we can 
	apply Lemma~\ref{lemma:rels 2  chain} to the Kronecker chains of length two containing $(a_i,b_i)$, and repeat this process as often as required, establishing in each step that two neighbouring $\Delta_A(a_j,b_j)$'s are conjugate.
\end{proof}

\subsection{Establishing \ref{cond:S1}.}
\label{sec:s1}
Our next goal is to establish the relations \ref{cond:S1}.  By the above, it will be sufficient to do so in the case where the radical cubes to zero.

\begin{lem}\label{lem:relations s1}
	Let $A=kQ/I$ be a non-wild finite-dimensional algebra with $\rad^3(A)=0$. Assume that 
	 $C=((a_1,b_1),\ldots,(a_n,b_n))$ is a surjective maximal Kronecker chain in $A$. Then the composition of any arrow in $C$ with any arrow not in $C$ is zero.
\end{lem}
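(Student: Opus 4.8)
The goal is to show that if $C=((a_1,b_1),\ldots,(a_n,b_n))$ is a surjective maximal Kronecker chain in a non-wild algebra $A$ with $\rad^3(A)=0$, then $cd=0$ and $dc=0$ whenever $c$ is an arrow in $C$ and $d$ is an arrow not in $C$. The plan is to reduce the question to a small idempotent subalgebra and to use the surjectivity of $\Delta_A(a_i,b_i)$, which by Proposition~\ref{prop: Delta} produces an honest derivation $\delta$ of $A$ mapping the standard idempotents to zero with $\delta(a_i)=wa_i+b_i$ (and similarly an ``opposite'' one with $\delta(b_i)=a_i+wb_i$), and then exploit that $\delta$ stabilizes $I$.

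\textbf{Step 1: set-up and localisation.} By Lemma~\ref{rem:shape}, in a non-wild algebra the arrows $a_i,b_i$ are the only arrows with source $s(a_i)$ and the only arrows with target $t(a_i)$; moreover in shape~\eqref{eq:shape1} and~\eqref{eq:shape2} the chain $C$ is its own connected component, so the statement is vacuous, and we may assume $C$ is of shape~\eqref{eq:shape3}. First I would reduce to the case of a single arrow $c$ in $C$: say $c=a_i$ (the argument for $c=b_i$ is identical, and the ``composition on the other side'' case $dc$ reduces to the $cd$ case by passing to the opposite algebra via \cite[Ch.~XIX, Lemma~1.4]{simson2006elements}). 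So we must show: if $d$ is an arrow with $s(d)=t(a_i)$ and $d\notin\{a_{i+1},b_{i+1}\}$ (or $d\notin\{a_1,b_1\}$ if $s(a_1)=t(a_n)$, but shape~\eqref{eq:shape3} rules this out so $i<n$ is not forced — one still handles $i=n$, where $t(a_n)$ may carry genuinely ``external'' arrows), then $a_id=0$. Passing to $e=e_{s(a_i)}+e_{t(a_i)}+e_{t(d)}$ and the subalgebra $eAe$, which is non-wild by \cite[I.4.7(b)]{erdmann2006blocks}, and using that $\Delta_A(a_i,b_i)$ factors through $\Delta_{eAe}(a_i,b_i)$ by Proposition~\ref{prop: Delta}, we may assume $A=eAe$ has at most three vertices.

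\textbf{Step 2: the derivation argument.} Since $C$ is surjective, after possibly rotating (using Corollary~\ref{cor: all delta surjective}, so that the relevant $\Delta_A(a_i,b_i)$ is itself surjective) Proposition~\ref{prop: Delta}\eqref{item: formula delta} gives a derivation $\delta\in\Der_{\rad}(A)$ with $\delta(e_j)=0$ for all $j$ and $\delta(a_i)=wa_i+b_i+r$ with $r\in\rad^2(A)=$ (a space not involving $a_i,b_i$ in degree one), hence since $a_i$ is the unique arrow from $s(a_i)$ to $t(a_i)$ and $\rad^3(A)=0$ we may absorb $r$ and assume $\delta(a_i)=wa_i+b_i$, and similarly $\delta(b_i)=ya_i+(w-y')b_i$ for scalars; crucially $\delta$ acts on the (at most two-dimensional) space $e_{t(a_i)}Ae_{t(d)}$ of relations among $\{a_id, b_id\}$ and possibly $d'$-type paths. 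Now suppose $a_id\neq 0$ in $A$. The products $a_id$ and $b_id$ (and any paths through arrows $\neq a_i,b_i$ into $t(a_i)$, of which there are none) span a subspace of the finite-dimensional space $e_{s(a_i)}Ae_{t(d)}$. Applying $\delta$ to the element $0$ obtained from whatever relations hold, and using $\delta(a_id)=(wa_i+b_i)d+a_i\delta(d)$ with $\delta(d)=z\,d+(\text{paths not from }t(a_i))$ — here one must use that $d$ is the unique external arrow or is one of two, so $\delta(d)$ is a combination of arrows out of $s(d)=t(a_i)$, none of which can be $a_{i+1},b_{i+1}$ by shape, hence those compositions live in $a_i\rad^2 = 0$ — we get $b_id = -w a_id - a_i\delta(d) \equiv -w\,a_id \pmod{\text{terms that vanish}}$, and symmetrically, using the opposite derivation, $a_id$ is forced into the span of $b_id$; combining, the two-dimensional picture collapses and one concludes $a_id, b_id$ are proportional. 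Then a final application of $\delta$ to the proportionality relation $a_id - \lambda b_id = 0$, mimicking the computations in the proof of Lemma~\ref{lemma double kronecker} and Proposition~\ref{prop: deltas are equal}, yields a scalar equation forcing $(\text{something})\cdot a_id = 0$; since $\operatorname{char}(k)\neq 2$ that scalar is nonzero (this is exactly the place the characteristic assumption is used), so $a_id=0$, a contradiction. Hence $a_id=0$.

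\textbf{Main obstacle.} The delicate point — and the one I would spend the most care on — is Step~2's bookkeeping of $\delta(d)$: one must show that the ``new'' paths introduced by $\delta(d)$ all lie in $a_i\rad^2(A)$ (hence vanish, as $\rad^3(A)=0$ and $a_i$ sits in $\rad$), which rests on the non-wildness constraint that no arrow out of $t(a_i)$ other than $a_{i+1},b_{i+1}$ can compose nontrivially after $a_i$ — and then to correctly extract the scalar whose non-vanishing needs $\operatorname{char}(k)\neq 2$. This mirrors case~(2)(b) in the proof of Lemma~\ref{lemma double kronecker}, where the same phenomenon appears; I expect the cleanest route is to first reduce (via a further quotient, as in Lemma~\ref{lemma:rels 2 chain}) to the literal three-vertex double-Kronecker situation plus the single extra arrow $d$, so that one is again working with an explicitly presented small algebra and can invoke, or re-run, the computation in Lemma~\ref{lemma double kronecker} almost verbatim.
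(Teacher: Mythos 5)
Your Step~2 has a genuine gap: you never establish that there \emph{is} a nontrivial relation to which you can apply the derivation. You write ``Applying $\delta$ to the element $0$ obtained from whatever relations hold,'' but under your assumption $a_i d\neq 0$ there is no a priori reason any relation between $a_n c$ and $b_n c$ holds in $A$ --- and if none does, the derivation computation produces nothing. The paper's proof closes exactly this hole by passing to the quotient $B=A/(A(1-e)A,\,Ae_{n+2}Ae_nA)$ (not the subalgebra $eAe$), whose quiver is either $n\rightrightarrows n{+}1\to n{+}2$ or the two-vertex cyclic quiver $n\rightrightarrows n{+}1$ with a single returning arrow, and then invoking a representation-type input: in the linear case the path algebra $kQ'$ is itself wild, and in the cyclic case one cites Br\"ustle's classification of tame two-point algebras \cite{brustle2001tame}. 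Either way, non-wildness of $B$ \emph{forces} a relation of the form $(x_1a_n+x_2b_n)c$, which then lifts to $A$ because the discarded paths lie in $\rad^3(A)=0$. Only after this relation exists does the derivation argument take over: choosing $\delta$ with $\delta(a_n)=wa_n+y_1b_n$, $\delta(b_n)=y_2a_n+wb_n$, $\delta(c)=zc$ for arbitrary $y_1,y_2$ and applying $\delta$ to the relation yields $(x_1y_1\,b_n+x_2y_2\,a_n)c=0$, from which both $a_nc=0$ and $b_nc=0$ follow by varying $y_1,y_2$. Without the wildness/Br\"ustle input, your derivation computation has nothing to start from and the contradiction you describe does not materialize.

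Two smaller inaccuracies. First, your assertion that the characteristic-$\neq 2$ hypothesis enters ``exactly here'' to make a scalar nonzero is not what happens: the scalar extraction in this lemma works in any characteristic, and $\cha(k)\neq 2$ is used only indirectly, via Corollary~\ref{cor: all delta surjective} (which supplies surjectivity of $\Delta_A(a_n,b_n)$, and whose proof rests on Lemma~\ref{lemma double kronecker}). Second, your reduction to $eAe$ is slightly off; one needs a \emph{quotient} by an ideal killing the paths through $e_{n+2}$ back to $e_n$ as well as everything supported away from $e$, so that the resulting algebra genuinely has the small quiver $Q'$ and the classification result can be applied. This is fixable, but as written your reduction does not put you in a position to invoke a two- or three-point classification.
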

\begin{proof}
	As we have seen in Lemma \ref{rem:shape}, no arrow outside $C$ may share its source or target with any of the $a_i$'s and $b_i$'s.
	So the only arrows in $C$ which could have non-trivial product with an arrow not in $C$  are $a_1, b_1$ (from the left) and $a_n, b_n$ (from the right). In particular we may assume that $s(a_1)\neq t(a_n)$, as the claim is trivial otherwise.
	
	Assume $c$ is an arrow outside $C$.
	 Let us first assume $s(c)=t(a_n)=t(b_n)$. Let $e_n$, $e_{n+1}$ and $e_{n+2}$ be the idempotents in $A$ corresponding to $s(a_n)$, $t(a_n)$ and $t(c)$, respectively. Set $e=e_n+e_{n+1}+e_{n+2}$ if $e_n\neq e_{n+2}$, and $e=e_n+e_{n+1}$ otherwise.
	Clearly $a_n$ and $b_n$ are the only arrows emanating from $e_n$, and the only arrows terminating at $e_{n+1}$. If $c$ is not the only arrow from $s(c)$ to $t(c)$ then our claim must hold, since otherwise our Kronecker chain would not be maximal. So let us assume that $c$ is the only arrow from $s(c)$ to $t(c)$ .
	
	Set $B=A/(A(1-e)A, Ae_{n+2}Ae_{n}A)$ if $e_{n+2} \neq e_n$ and $B=A/A(1-e)A$ if $e_{n+2}=e_n$.
	
	Since $A$ is not wild, the algebra $B$ is not wild either. Note that by the above discussion $a_n$, $b_n$ and $c$ are the only arrows of $Q$ that do not map to zero in $B$. So $B$ has quiver $Q'$ looking as follows
	$$
		\begin{tikzcd}
			n \ar[shift left]{r}{a_n} \ar[shift right,swap]{r}{b_n} & n+1 \ar{r}{c}  & n+2 
		\end{tikzcd}\quad
		\textrm{ or }\quad
		\begin{tikzcd}
		\phantom{+1}n \ar[shift left]{r}{a_n} \ar[shift right,swap]{r}{b_n} & n+1 \ar[bend right, swap]{l}{c}  
		\end{tikzcd}
	$$ 
	In the first case $kQ'$ is wild, so $B$ needs to be a quotient of 
	$kQ'/((x_1a_n + x_2b_n)c)$ for appropriately chosen $(0,0)\neq (x_1,x_2)\in k^2$.
	As $e_nA(1-e)Ae_n \subseteq \rad^3(A)$ (any such path in $Q$ starting in $e_n$ needs to pass through $e_{n+1}$ and some $e_i$, for $i \neq n,n+1,n+2$) and $e_nAe_{n+2}Ae_n \subseteq \rad^3(A)$ (for the same reason), we get from our relation in $B$ the relation $(x_1a_n+x_2b_n)c=0$ in $A$ (as $\rad^3(A)=0$). Similarly, in the second case,  by \cite[Theorem 1]{brustle2001tame}, $B$ again must be a quotient of 
	$kQ'/((x_1a_n+x_2b_n)c)$ for $(0,0)\neq (x_1,x_2)\in k^2$. Again $e_nA(1-e)Ae_n \subseteq \rad^3(A)$, since any path starting in $e_n$ must pass through $e_{n+1}$ (and then, to be in the specified set, pass through a different vertex before returning to $e_n$). Hence we get the relation $(x_1a_n+x_2b_n)c=0$ in $A$ regardless of the case we are in.
	
	By the surjectivity of $\Delta_A(a_n,b_n)$ (which follows by Corollary~\ref{cor: all delta surjective}) we get, for any $y_1,y_2\in k$, a derivation $\delta$ on $A$ such that $\delta(a_n)= wa_n + y_1b_n$, 
	$\delta(b_n)=y_2a_n+wb_n$ for some $w\in k$. We will have $\delta(c)=zc$ for some $z\in k$, as no other arrow shares both source and target with $c$. Therefore
	$$
		\begin{array}{rcl}
		0&=&\delta((x_1a_n+x_2b_n)c)= (w+z)(x_1a_n+x_2b_n)c + x_1y_1 b_n c + x_2y_2a_nc \\&=&
		(x_1y_1 b_n + x_2y_2a_n)c
		\end{array}
	$$
	As we may choose $y_1,y_2$ freely we get the relations $a_nc=0$ and $b_nc=0$ in $A$ (even in the special case where $x_1=0$ or $x_2=0$).
	
	Now we would also have to deal with the case where $t(c)=s(a_1)=s(b_1)$. However, this case is completely analogous to the above (technically it is even covered by the above if we replace $A$ by its opposite algebra, using \cite[Ch. XIX, Lemma 1.4]{simson2006elements} again).
\end{proof}
	
\subsection{Establishing \ref{cond:S2}--\ref{cond:S3} globally}	
	\label{sec:s2s3glo}
We will now develop the tools to get the relations \ref{cond:S2}--\ref{cond:S3} globally, using a type of patching argument based on the conjugacy of the representations $\Delta_A(a_i,b_i):\ \HHH^1_{\rad}(A) \longrightarrow \mathfrak{sl}_2$ (for $(a_i,b_i)$ in a Kronecker chain) that was established in Corollary \ref{cor: all delta surjective}. The following proposition is the tool to turn relations ``holding up to isomorphism'' into genuine relations. 

\begin{prop}\label{prop: delta normalises relations}
	Let $A = kQ/I$, where 
	\begin{equation}
	Q:
	\begin{tikzcd}
	1 \ar[shift left]{r}{a} \ar[shift right,swap]{r}{b} & 2 \ar[shift left]{r}{c} \ar[shift right,swap]{r}{d} & 3 
	\end{tikzcd} 
	\textrm{ or }
	\begin{tikzcd}
	1 \ar[shift left]{r}{a} \ar[shift right,swap]{r}{b} & 2\ar[shift left, bend left=50]{l}{d} \ar[shift right,swap, bend right=50]{l}{c}  
	\end{tikzcd}
	\end{equation}
	and $I$ is an admissible ideal. Assume that $A$ is isomorphic by base change to $kQ/J$, where $J\supseteq (ac,bd,ad+bc)$ and $J\nsupseteq (ac,bd,ad,bc)$, and assume that $\Delta_A(a,b)$ is surjective and  $\Delta_A(a,b)=\Delta_A(c,d)$. 
	Then $I\supseteq (ac,bd,ad+bc)$.
\end{prop}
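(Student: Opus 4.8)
The plan is to show that the base-change automorphism witnessing $A\cong kQ/J$ acts by \emph{proportional} matrices on the pair $\{a,b\}$ and on the pair $\{c,d\}$, and that this proportionality is precisely what forces the relations $ac$, $bd$, $ad+bc$ to pull back to themselves. By Definition~\ref{def iso by base change} there is an automorphism $\psi$ of $kQ$ with $\psi(I)=J$ which fixes the standard idempotents and is linear on the span of the arrows; since in either of the two admissible shapes $\{a,b\}$ and $\{c,d\}$ are the full sets of arrows between the relevant pairs of vertices, $\psi$ must decompose as $\psi=T_X(a,b)\circ T_Z(c,d)$ for some $X,Z\in\operatorname{GL}_2(k)$ (the two factors commuting, as they move disjoint arrows). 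Note also that $J\supseteq(ac,bd,ad+bc)$ together with $J\nsupseteq(ac,bd,ad,bc)$ forces $ad\notin J$, since otherwise $bc=(ad+bc)-ad\in J$; hence Proposition~\ref{prop: deltas are equal} applies to $kQ/J$ and gives $\Delta_{kQ/J}(a,b)=\Delta_{kQ/J}(c,d)$.

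Next I would apply the transformation law of Proposition~\ref{prop:delta base change} twice, once for the base change $T_Z(c,d)$ and once for $T_X(a,b)$. This produces, for the bijection $\delta\mapsto\psi\delta\psi^{-1}$ from $\Der_{\rad}(A)$ onto $\Der_{\rad}(kQ/J)$, the identities
\[
\Delta_{kQ/J}(a,b)(\psi\delta\psi^{-1})=X^{-1}\Delta_A(a,b)(\delta)X,\qquad \Delta_{kQ/J}(c,d)(\psi\delta\psi^{-1})=Z^{-1}\Delta_A(c,d)(\delta)Z.
\]
Feeding in the two equalities $\Delta_A(a,b)=\Delta_A(c,d)$ and $\Delta_{kQ/J}(a,b)=\Delta_{kQ/J}(c,d)$ yields $X^{-1}\Delta_A(a,b)(\delta)X=Z^{-1}\Delta_A(a,b)(\delta)Z$ for every $\delta$, that is, $ZX^{-1}$ commutes with $\Delta_A(a,b)(\delta)$ for all $\delta$. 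Since $\Delta_A(a,b)$ is surjective, $ZX^{-1}$ centralises all of $\mathfrak{sl}_2\subset k^{2\times 2}$, and the centraliser of $\mathfrak{sl}_2$ there is the space of scalar matrices (commuting with $H$ forces a diagonal matrix, and then commuting with $E$ forces its two diagonal entries to be equal); hence $Z=\lambda X$ for some $\lambda\in k^\times$.

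To finish, I would work inside the span of the four length-two paths $ac$, $ad$, $bc$, $bd$. Writing a general element of this space as $\sum_{i,j}M_{ij}u_iv_j$ with $(u_1,u_2)=(a,b)$ and $(v_1,v_2)=(c,d)$, the automorphism $\psi$ sends it to the element with coefficient matrix $X^{\top}MZ=\lambda\,X^{\top}MX$, which is symmetric whenever $M$ is. Since $ac$, $bd$ and $ad+bc$ span precisely the subspace corresponding to the symmetric $2\times 2$ matrices, $\psi$ maps $\operatorname{span}(ac,bd,ad+bc)$ into itself; as this span is contained in $J=\psi(I)$, we get $ac,bd,ad+bc\in I$, as wanted. (Equivalently, one may substitute $Z=\lambda X$ and expand $\psi(ac)$, $\psi(bd)$, $\psi(ad+bc)$ directly, observing that each is a $k$-linear combination of $ac$, $bd$, $ad+bc$.) The circular quiver is handled verbatim, with $c,d$ running from $2$ to $1$. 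The one point that needs genuine care is keeping the conjugation directions straight across the two applications of Proposition~\ref{prop:delta base change}; beyond that, the argument is formal or a short matrix computation.
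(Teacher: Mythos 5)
Your proof is correct and follows essentially the same route as the paper's: decompose the base-change automorphism as $T_X(a,b)\circ T_Z(c,d)$, use Proposition~\ref{prop:delta base change} together with the two equalities of $\Delta$'s (noting, as you correctly check, that $ad\notin J$ so Proposition~\ref{prop: deltas are equal} applies) to force $ZX^{-1}$ to centralise $\mathfrak{sl}_2$ and hence be scalar, and then verify that the relations pull back. The only difference is cosmetic: where the paper finishes by expanding $\varphi(ac)$, $\varphi(bd)$, $\varphi(ad+bc)$ directly, you observe that under the correspondence $\sum M_{ij}u_iv_j$ with $2\times 2$ matrices the automorphism acts by $M\mapsto\lambda X^\top MX$, which preserves symmetry, and that $(ac,bd,ad+bc)$ is exactly the symmetric subspace -- a slightly slicker way of organising the same computation.
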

\begin{proof}
	Fix $J\supseteq (ac,bd,ad+bc)$, and fix an isomorphism by base change $\varphi:\ A\longrightarrow kQ/J$. We necessarily have $\varphi=T_X(a,b)\circ T_Y(c,d)$ for certain $X,Y\in \operatorname{GL}_2(k)$, as $a,b,c,d$ are the only arrows in $Q$.
	Note that 
	$\Delta_{kQ/J}(a,b)=\Delta_{kQ/J}(c,d)$ by Proposition~\ref{prop: deltas are equal}. 
	
	By Proposition~\ref{prop:delta base change} we have
	$$\Delta_{A}(a,b)(\varphi^{-1}\circ\delta\circ \varphi) = X \cdot \Delta_{kQ/J}(a,b)(\delta)\cdot X^{-1}$$ 
	and 
	$$\Delta_{A}(c,d)(\varphi^{-1}\circ\delta\circ \varphi) = Y \cdot \Delta_{kQ/J}(c,d)(\delta)\cdot Y^{-1}$$ 
	for any derivation $\delta$ of $kQ/J$. Due to the equalities of $\Delta$'s we thus obtain 
	$$(X^{-1}Y)\cdot \Delta_{A}(a,b)(\delta)\cdot (Y^{-1}X)= \Delta_{A}(a,b)(\delta)$$ 
	for all derivations 
	$\delta$ of $A$. As $\Delta_{A}(a,b)$ is surjective onto $\mathfrak{sl_2}$ (by assumption) this implies $X=z\cdot Y$ for some 
	$z\in k$. Write $x_1,\ldots, x_4$ for the entries of $X$. Then 
	$$
	\varphi(ac) = (x_1a+x_2b)(zx_1c+zx_2d)= zx_1^2ac+zx_1x_2(bc+ad)+ zx_2^2bd = 0
	$$
	$$
	\varphi(bd) = (x_3a+x_4b)(zx_3c+zx_4d)= zx_3^2ac+zx_3x_4(bc+ad)+ zx_4^2bd = 0
	$$
	and 
	$$
	\begin{array}{rcl}
	\varphi(ad+bc) &=& (x_1a+x_2b)(zx_3c+zx_4d) + (x_3a+x_4b)(zx_1c+zx_2d)\\
	&=& zx_1x_4ad+zx_2x_3bc + zx_2x_3ad + zx_1x_4 bc =0 
	\end{array}
	$$
	Hence $(ac,bd,ac+bd) \subseteq \ker(\varphi)$, which implies $I\supseteq (ac,bd,ac+bd)$, as $\varphi$ is an isomorphism by definition.
\end{proof}

\begin{cor}\label{cor:rels in chain}
	Let $A = kQ/I$ be a non-wild finite-dimensional algebra with ${\rad^3(A)=\{0\}}$, where 
	\begin{equation}
	Q:
	\begin{tikzcd}
	1 \ar[shift left]{r}{a_1} \ar[shift right,swap]{r}{b_1} & 2 \ar[shift left]{r}{a_2} \ar[shift right,swap]{r}{b_2} & \cdots   n\ar[shift left]{r}{a_{n}} \ar[shift right,swap]{r}{b_{n}} & n+
	1
	\end{tikzcd}
	\end{equation}
	for $n \geq 1$
	and $I$ is an admissible ideal.
	Let us allow for vertex $1$ to be equal to vertex $n+1$ if $n\geq 3$, but all other vertices must be distinct no matter what.

	If $C=((a_1,b_1),\ldots, (a_n,b_n))$ is a surjective Kronecker chain in $A$, then $A$ is isomorphic by base change to an algebra $kQ/J$ in which
	$a_ia_{i+1}=0$, $b_ib_{i+1}=0$ and $a_ib_{i+1}+b_ia_{i+1}=0$ for $i=1,\ldots,n$, and $a_na_1=0$, 
	$b_nb_1=0$ and $a_nb_1+b_na_1=0$.
\end{cor}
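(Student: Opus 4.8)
The plan is to first make the maps $\Delta_A(a_i,b_i)$ all equal by a single change of presentation, and then to run the pairwise machinery of Lemma~\ref{lemma:rels 2 chain} on each consecutive pair of the chain, using Proposition~\ref{prop:delta base change} to show that no further change of presentation is actually needed. For $n=1$ the asserted relations are vacuous, so assume $n\geq 2$. Since $Q$ has no arrows other than $a_1,b_1,\dots,a_n,b_n$, the chain $C$ uses every arrow of $Q$ and is therefore a \emph{maximal} Kronecker chain; moreover we are in the case $n\geq 3$ (when $C$ may be circular) or else $C$ is non-circular, so in particular $n>2$ or $t(a_n)\neq s(a_1)$. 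Hence Corollary~\ref{cor: all delta surjective} applies and $\Delta_A(a_1,b_1),\dots,\Delta_A(a_n,b_n)$ are all conjugate and surjective. Choosing $X_i\in\operatorname{GL}_2(k)$ with $X_i\,\Delta_A(a_i,b_i)\,X_i^{-1}=\Delta_A(a_1,b_1)$ and applying the pairwise-commuting automorphisms $T_{X_i}(a_i,b_i)$ simultaneously, Proposition~\ref{prop:delta base change} (whose ``second assertion'' says each $T_{X_i}$ only conjugates its own $\Delta$) shows that $A$ is isomorphic by base change to an algebra, which we again write $A=kQ/I$, for which
\[
\Delta_A(a_1,b_1)=\cdots=\Delta_A(a_n,b_n)=:\Delta,
\]
with $\Delta$ surjective onto $\mathfrak{sl}_2$. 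It now suffices to prove that the asserted relations hold in this $I$, for then $J=I$ works.

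Next I would argue one consecutive pair at a time. Fix $i$, where $i\in\{1,\dots,n-1\}$ always and, in the circular case, also $i=n$ with the convention $a_{n+1}=a_1$, $b_{n+1}=b_1$. If the circular case occurs with $i=n$ and all four of $a_na_1,a_nb_1,b_na_1,b_nb_1$ already lie in $I$, then the relations for this $i$ are immediate; otherwise the subchain $((a_i,b_i),(a_{i+1},b_{i+1}))$ is a Kronecker chain by condition~\eqref{Kc:C} (respectively its wrap-around analogue), it satisfies the hypotheses of Lemma~\ref{lemma:rels 2 chain} because $\Delta_A(a_i,b_i)$ is surjective by the first step, and its two endpoints $s(a_i)$ and $t(a_{i+1})$ are distinct (since $n\geq 3$ in the circular case, and all vertices are distinct otherwise). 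Lemma~\ref{lemma:rels 2 chain} then provides an isomorphism by base change $\varphi_i\colon A\to kQ/J^{(i)}$ with $a_ia_{i+1},\,b_ib_{i+1},\,a_ib_{i+1}+b_ia_{i+1}\in J^{(i)}$ and $\Delta_{kQ/J^{(i)}}(a_i,b_i)=\Delta_{kQ/J^{(i)}}(a_{i+1},b_{i+1})$, the degenerate alternative of Lemma~\ref{lemma double kronecker} being ruled out by~\eqref{Kc:C}. Crucially, I would \emph{not} apply $\varphi_i$ to $A$; I only use its existence.

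The heart of the matter -- and the step I expect to be the main obstacle -- is showing that these pairwise base changes are already trivial on the arrows they touch. Writing $\varphi_i$ as a product $\prod_j T_{Z_j}(a_j,b_j)$ and putting $X=Z_i$, $Y=Z_{i+1}$, Proposition~\ref{prop:delta base change} shows that $\Delta_A(a_i,b_i)$ is a conjugate of $\Delta_{kQ/J^{(i)}}(a_i,b_i)$ by $X$, and $\Delta_A(a_{i+1},b_{i+1})$ a conjugate of $\Delta_{kQ/J^{(i)}}(a_{i+1},b_{i+1})$ by $Y$. Because the two source maps agree (by the first step), the two target maps agree (by the choice of $J^{(i)}$), and both target maps are surjective (surjectivity being invariant under isomorphism by base change), it follows that $X^{-1}Y$ centralises $\mathfrak{sl}_2\subseteq k^{2\times 2}$ and is therefore a scalar matrix -- this is exactly where $\operatorname{char}k\neq 2$ is used. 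Substituting $X=zY$ and using $a_ia_{i+1}=b_ib_{i+1}=0$ and $a_ib_{i+1}=-b_ia_{i+1}$ in $kQ/J^{(i)}$ -- the same computation as in the proof of Proposition~\ref{prop: delta normalises relations} -- one checks that $\varphi_i$ carries each of $a_ia_{i+1}$, $b_ib_{i+1}$, $a_ib_{i+1}+b_ia_{i+1}$ into $J^{(i)}$, so these elements lie in $\varphi_i^{-1}(J^{(i)})=I$. Finally, running this over all relevant $i$ (including the wrap-around pair in the circular case) and noting that the relations produced for different $i$ involve disjoint sets of arrows, and so cannot conflict, puts all the asserted relations into $I$, so $A=kQ/I$ itself has the required form. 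In a nutshell: making all the $\Delta_A(a_i,b_i)$ equal at the outset is precisely what collapses every base change coming from Lemma~\ref{lemma:rels 2 chain} to a scalar on the relevant pair -- the mechanism of Proposition~\ref{prop: delta normalises relations} -- so that all the relations hold in one and the same presentation.
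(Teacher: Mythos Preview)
Your proof is correct and follows the same two-step strategy as the paper: first apply a single base change to make all the $\Delta_A(a_i,b_i)$ equal (via Corollary~\ref{cor: all delta surjective} and Proposition~\ref{prop:delta base change}), then handle each consecutive pair using the length-two machinery together with the scalar-centraliser computation underlying Proposition~\ref{prop: delta normalises relations}. The only difference is in how the second step is executed: the paper passes to the subquotient $B_i=eBe/eBe_{i+2}Be_iBe$ so that Proposition~\ref{prop: delta normalises relations} applies verbatim, and proves $\Delta_{B_i}(a_i,b_i)=\Delta_{B_i}(a_{i+1},b_{i+1})$ by observing that these maps are conjugate and agree on the image of $\HHH^1_{\rad}(B)$, which already surjects onto $\mathfrak{sl}_2$; you instead apply Lemma~\ref{lemma:rels 2  chain} directly in $A$, factor the resulting base change as $\prod_j T_{Z_j}(a_j,b_j)$, and inline the computation of Proposition~\ref{prop: delta normalises relations} to conclude $Z_i=zZ_{i+1}$ and hence that the relations already lie in $I$. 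Your route avoids the auxiliary quotient $B_i$ and is marginally more direct, but the underlying idea is identical.
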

\begin{proof}
	By Corollary~\ref{cor: all delta surjective} we know that all $\Delta_A(a_i,b_i)$ are conjugate. 
	Using Proposition~\ref{prop:delta base change} we can then find 
	$Y_1,\ldots, Y_n \in \operatorname{GL}_2(k)$ such that $\Delta_{kQ/J}(a_i,b_i)=\Delta_{kQ/J}(a_j,b_j)$ for all $1\leq i,j\leq n$, where 
	$J$ is the preimage of $I$ under $\varphi = T_{Y_1}(a_1,b_1)\circ \cdots \circ T_{Y_n}(a_n,b_n)$.
	Note that $\varphi$ induces an isomorphism by base change  between $A$ and $B=kQ/J$. 
	
	Fix $1 \leq i \leq n-1$, set $e=e_i+e_{i+1}+e_{i+2}$, and consider 
	$$
		B_i = eBe/ eB e_{i+2} B e_i B e
	$$
	As $B$ is non-wild, so is $B_i$, and therefore $a_i$, $b_i$, $a_{i+1}$ and $b_{i+1}$ are the only arrows in the quiver of $B_i$. From Lemma~\ref{lemma:rels 2  chain} we know that 
	$a_ia_{i+1}$, $b_ib_{i+1}$ and $a_ib_{i+1}+b_ia_{i+1}$ are relations in $B$ up to isomorphism by base change, so certainly the same is true in $B_i$. Lastly, $\Delta_B(a_i,b_i)$ factors through $\Delta_{B_i}(a_i, b_i)$, since any derivation of $B$ sending the standard idempotents to zero stabilises $eBe$ and $eB e_{i+2} B e_i B e$. In the same vein $\Delta_B(a_{i+1},b_{i+1})$ factors through
	$\Delta_{B_i}(a_{i+1}, b_{i+1})$. As $\Delta_B(a_i,b_i)=\Delta_B(a_{i+1},b_{i+1})$ it follows that  $\Delta_{B_i}(a_i,b_i)$ and $\Delta_{B_i}(a_{i+1}, b_{i+1})$ coincide on derivations of $B_i$  which come from derivations of $B$. By surjectivity of $\Delta_B(a_i,b_i)$ it follows that if
	$\Delta_{B_i}(a_{i},b_{i})$ and $\Delta_{B_i}(a_{i+1},b_{i+1})$ are conjugate, then they must be equal. 
	By Lemma~\ref{lemma double kronecker} $\Delta_{B_i}(a_{i},b_{i})$ and $\Delta_{B_i}(a_{i+1},b_{i+1})$ could only be non-conjugate if $\{0\}=\rad^2(B_i)=e_iB_i e_{i+2}$, which is equivalent to $e_iBe_{i+2} \subseteq e_i B e_{i+2} B e_i B e_{i+2} \subseteq \rad^3(B)=\{0\}$. But $e_i B e_{i+2}=\{0\}$ (which implies $e_i A e_{i+2}=\{0\}$) is not allowed if $C$ is to be a Kronecker chain. Hence $\Delta_{B_i}(a_i,b_i)$ and $\Delta_{B_i}(a_{i+1}, b_{i+1})$ are conjugate and therefore equal.
	
	We can now apply Proposition~\ref{prop: delta normalises relations} to obtain that $a_ia_{i+1}$, 
	$b_ib_{i+1}$ and $a_ib_{i+1}+b_ia_{i+1}$ are zero in $B_i$, which implies that, as elements of $B$, they are contained in $e_i B e_{i+2} B e_i B e_{i+2}\subseteq \rad^3(B)=\{0\}$. Hence 
	$a_ia_{i+1}=0$, 
	$b_ib_{i+1}=0$ and $a_ib_{i+1}+b_ia_{i+1}=0$ in $B$, and this is true for all $i$.  
	
	The only thing left to show is that $a_na_1=0$, 
	$b_nb_1=0$ and $a_nb_1+b_na_1=0$ in $B$. If $((a_2,b_2), \ldots, (a_n,b_n),(a_1,b_1))$ is not a Kronecker chain, then these relations hold trivially. Otherwise we may apply the same argument as before to this Kronecker chain, which gives us the relations (noting that there is no need to repeat the step in which we apply an isomorphism by base change).  
\end{proof}

The above excludes the case of a circular Kronecker chain of length two, which is a special case mainly for technical reasons. The following proposition deals with that special case.

\begin{prop}\label{prop: special case rels}
	Assume $A=kQ/I$ is a finite-dimensional non-wild $k$-algebra with $\rad^3(A)=\{0\}$, where 
\begin{equation}
Q:
\begin{tikzcd}
1 \ar[bend left,shift left=3]{r}{a} \ar[bend left,shift left,swap]{r}{b} & 
2\ar[bend left,shift left=3]{l}{d} \ar[bend left,shift left,swap]{l}{c} 
\end{tikzcd}
\end{equation}
	and assume that $\Delta_A(a,b)$ is surjective.
	Then $A$ is isomorphic by base change to $kQ/J$, where $J\supseteq (ac, bd, ad+bc)$. If $J\nsupseteq (ac,bc,ad,bd)$, then  $\Delta_{kQ/J}(a,b)=\Delta_{kQ/J}(c,d)$, and 
	$J \supseteq (ca, db, cb+da)$.
\end{prop}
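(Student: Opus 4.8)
The plan is to prove the two assertions in turn: first obtain, up to base change, the ``forward'' relations $ac,bd,ad+bc$, and then exploit the symmetry between the two Kronecker pairs (together with Propositions~\ref{prop: deltas are equal} and~\ref{prop: delta normalises relations}) to upgrade these to the ``backward'' relations $ca,db,cb+da$. Throughout I would use that, since $\rad^3(A)=\{0\}$ and the circular quiver has no path of length two with the same source and target as an arrow, every derivation of $A$ annihilating the standard idempotents is degree-preserving; hence it stabilises the degree-two part $R_1\subseteq V\otimes W$ of $I$ at vertex $1$ (where $V=\langle a,b\rangle$ and $W=\langle c,d\rangle$), acting on $V\otimes W$ by the Leibniz rule $\delta|_V\otimes 1+1\otimes\delta|_W$, and the $\rad^2$-correction terms in Proposition~\ref{prop: Delta}\eqref{item: formula delta} contribute nothing to products of arrows.

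For the forward relations I would argue — very much in the spirit of Lemma~\ref{lemma double kronecker} and Proposition~\ref{prop:local} — that $\dim(V\otimes W)/R_1\le 1$. Since $e_1Ae_1$ is a non-wild local algebra with radical square zero, this dimension is at most $2$; if it equalled $2$ then, using the derivations realising $H,E,F$ supplied by surjectivity of $\Delta_A(a,b)$, the space $R_1$ would have to be $\{v\otimes w_0:v\in V\}$ for some $0\neq w_0\in W$: the case where $R_1$ contains an invertible tensor is excluded because, via Cayley--Hamilton, it forces a one-dimensional $\operatorname{ad}$-stable subspace of $\mathfrak{sl}_2$, which cannot exist as $\cha(k)\neq 2$; and the case $R_1\subseteq v_0\otimes W$ is excluded because $V$ is an irreducible $\mathfrak{sl}_2$-module, so $\langle v_0\rangle$ is not invariant. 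After a base change one would then have $R_1=\langle ac,bc\rangle$, i.e.\ $ac=bc=0$, and a suitable idempotent subquotient of $A$ acquires a vertex with three incident arrows and is wild — a contradiction. Granting $\dim(V\otimes W)/R_1\le 1$: if this is $0$ the forward relations hold trivially, and if it is $1$ the line in $(V\otimes W)^{*}$ annihilating $R_1$ is spanned by a tensor that cannot have rank one (rank one would give $R_1=\langle ad,bc,bd\rangle$ up to base change, making $\Delta_A(a,b)$ non-surjective), hence has rank two and is therefore $\operatorname{GL}(V)\times\operatorname{GL}(W)$-equivalent to the functional annihilating $\langle ac,bd,ad+bc\rangle$; the corresponding base change produces $J$ with $J\supseteq(ac,bd,ad+bc)$.

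For the backward relations, suppose that after this base change $J\supseteq(ac,bd,ad+bc)$ but $J\nsupseteq (ac,bc,ad,bd)$; then $ad\notin J$, for otherwise $bc=(ad+bc)-ad\in J$ and all four products lie in $J$. Proposition~\ref{prop: deltas are equal}, whose hypotheses permit $t(c)=s(a)$, then gives $\Delta_{kQ/J}(a,b)=\Delta_{kQ/J}(c,d)$, and both are surjective by Proposition~\ref{prop:delta base change} together with surjectivity of $\Delta_A(a,b)$. Now apply the first part of the present proposition to $kQ/J$ with $(c,d)$ playing the role of $(a,b)$ — legitimate since $\Delta_{kQ/J}(c,d)$ is surjective, $kQ/J$ is non-wild and $\rad^3(kQ/J)=\{0\}$ — to obtain $kQ/J\cong kQ/J'$ by base change with $J'\supseteq(ca,db,cb+da)$. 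If moreover $J'\supseteq(ca,cb,da,db)$, then $e_2\rad^2(kQ/J')e_2=\{0\}$, hence also $e_2\rad^2(kQ/J)e_2=\{0\}$ by the isomorphism $kQ/J\cong kQ/J'$, so $(ca,db,cb+da)\subseteq J$; otherwise $J'\nsupseteq(ca,cb,da,db)$ and Proposition~\ref{prop: delta normalises relations} — which is stated precisely for this circular quiver, with the roles of $(a,b)$ and $(c,d)$ interchanged, and whose remaining hypotheses $\Delta_{kQ/J}(c,d)$ surjective and $\Delta_{kQ/J}(c,d)=\Delta_{kQ/J}(a,b)$ are now in hand — yields $J\supseteq(ca,db,cb+da)$ directly.

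I expect the main obstacle to be the exclusion of $\dim(V\otimes W)/R_1=2$ in the first part: the $\mathfrak{sl}_2$-representation-theoretic input is quick, but one then has to confirm that the small algebra with $ac=bc=0$ — under the constraints that surjectivity of $\Delta_A(a,b)$ additionally imposes on the backward relations — really is wild, which calls for the same delicate analysis of idempotent subquotients and appeals to the classification of tame algebras that underpins Lemma~\ref{lemma double kronecker} and Proposition~\ref{prop:local}.
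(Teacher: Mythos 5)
Your proof of the second half (the ``backward'' relations $ca,db,cb+da$) tracks the paper's argument closely: the paper likewise first obtains $(ac,bd,ad+bc)\subseteq J$ up to base change, then invokes Proposition~\ref{prop: deltas are equal} to get $\Delta_{kQ/J}(a,b)=\Delta_{kQ/J}(c,d)$, applies the forward step with roles of $(a,b)$ and $(c,d)$ reversed, and finally uses Proposition~\ref{prop: delta normalises relations} to land the backward relations in $J$ itself. You are in fact slightly more careful than the paper in explicitly disposing of the edge case $J'\supseteq(ca,cb,da,db)$ before invoking Proposition~\ref{prop: delta normalises relations}, whose hypotheses include $J'\nsupseteq(ca,cb,da,db)$.

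For the forward relations, however, your approach diverges substantially and contains genuine gaps. The paper simply cites Br\"{u}stle's classification of tame algebras on this circular two-vertex quiver (the same reference used in Lemma~\ref{lem:relations s1}); this external result directly supplies, up to isomorphism, the relations $ac,bd\in J$ together with two further relations of the form $(x_1c+x_2d)(y_1a+y_2b)$ and $(x_3c+x_4d)(y_3a+y_4b)$ with the coefficient matrices invertible, after which one short derivation computation using surjectivity of $\Delta_A(a,b)$ finishes. You instead try to argue intrinsically via $\mathfrak{sl}_2$-representation theory on $R_1\subseteq V\otimes W$. The trouble is that derivations do not act on $V\otimes W$ through a single copy of $\mathfrak{sl}_2$: they act through a subalgebra $\mathfrak{g}'\subseteq\mathfrak{gl}(V)\times\mathfrak{gl}(W)$ whose first projection surjects onto $\mathfrak{sl}(V)$, but a priori with no control on the second projection $\rho_W$. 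Your ``Cayley--Hamilton forces a one-dimensional $\operatorname{ad}$-stable subspace of $\mathfrak{sl}_2$'' step is only valid if $\rho_W$ is (up to scalar) dual to $\rho_V$, so that $V\otimes W\cong\End(V)$ carries the adjoint action; that is not given, and is essentially part of what the proposition is trying to establish (via $\Delta_{kQ/J}(a,b)=\Delta_{kQ/J}(c,d)$). The third subcase — ruling out $R_1=\langle ac,bc\rangle$ by wildness of a suitable idempotent subquotient — is also left unverified, and as you yourself observe, filling that hole would likely again require the classification result the paper already uses. So the second half of your argument is sound and essentially the paper's; the first half contains two substantive gaps, of which the Cayley--Hamilton exclusion appears to be not merely an omission but an argument that does not go through as stated.
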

\begin{proof}
	By \cite[Theorem 1]{brustle2001tame} and the fact that $\rad^3(A)=\{0\}$, the algebra $A$ is isomorphic to $kQ/J$ where
	\begin{enumerate}
		\item $ac \in J$
		\item $bd \in J$ 
		\item $(x_1 c + x_2 d)(y_1 a + y_2 b) \in J$
		\item $(x_3 c + x_4 d)(y_3 a +y_4b ) \in J$
	\end{enumerate}
	for $x_i, y_i \in k$ such that $x_1x_4 \neq x_2x_3$ and $y_1y_4 \neq y_2y_3$. Any isomorphism of $A$ is by base change, as any non-zero path in $A$ connecting the two vertices is an arrow.
	Hence we may assume without loss of generality that $I\supseteq (ac,bd)$. 
	
	As we assume $\Delta_A(a,b)$ to be surjective, there is a derivation $\delta$ of $A$ such that $\delta(a)=wa+b$ and $\delta(b)=a+wb$ for some $w\in k$. Write $\delta(c)=z_1c+z_2d$ and $\delta(d)=z_3c+z_4d$. Then 
	$$
	0=\delta(ac) = bc + z_2ad
	$$
	If $z_2$ is non-zero we can apply an isomorphism by base change, and assume without loss of generality that 
	$bc+ad\in I$ (which would conclude the proof of the first part). If $z_2=0$, then $I\supseteq (ac,bd,bc)$, and considering $0=\delta(bd)=ad$, we obtain $I\supseteq (ac,bd,bc,ad)$, so we are also done in this case. 
	
	Now assume $I\supseteq (ac,bd,ad+bc)$, but $I\nsupseteq (ac, bd, ad,bc)$. By Proposition~\ref{prop: deltas are equal} we have $\Delta_A(a,b)=\Delta_A(c,d)$. 
	Applying what we have proved so far with the roles of $a, b$ and $c,d$ reversed shows that there are $X, Y\in \operatorname{GL}_2(k)$ such that $J=T_X^{-1}(a,b)(T_Y^{-1}(c,d)(I))$ contains $ca$, $db$ and $da+cb$. Now Proposition~\ref{prop: delta normalises relations} implies that  $ca$, $db$ and $da+cb$ already lie in $I$, which completes the proof.
\end{proof}

All that is left to do now is see how all of these propositions and lemmas fit together.

\subsection{Proof of Theorem~\ref{thm:kronecker main}}
\label{sec:proofkromain}
Theorem \ref{thm:kronecker main} holds for $A$ if and only if it holds for every connected component of $A$, hence we may assume $A$ is connected.	
	By Corollary~\ref{cor: reduction to rad3 zero} we may assume $\rad^3(A)=\{0\}$
	Let us start with the first part. If $C$ is a surjective maximal Kronecker chain in $A$, then by 
	Lemma~\ref{lem:relations s1} the relations \ref{cond:S1} are satisfied. This will remain to be the case in any algebra isomorphic to $A$ by base change. Now, given that the relations \ref{cond:S1} hold, the relations \ref{cond:S2}--\ref{cond:S3} hold in $A$ if and only if they hold in $eAe/eJe$, where $e$ is the sum of all standard idempotents associated with either the source or the target of one of the $a_i$, and $J$ is the ideal generated by all arrows not in $C$. The algebra
	$eAe/eJe$ is of the right form to apply either Corollary~\ref{cor:rels in chain} or Proposition~\ref{prop: special case rels} to it. Either way, it follows that there is an algebra isomorphic by base change to $eAe/eJe$ in which the relations \ref{cond:S2}--\ref{cond:S3} hold. By extending this base change by the identity we get that $A$ is isomorphic by base change to an algebra in which  \ref{cond:S2}--\ref{cond:S3} hold, in addition to \ref{cond:S1} (which is not affected by isomorphism by base change).
	
	In the other direction, if \ref{cond:S1}--\ref{cond:S3} are satisfied in an algebra isomorphic by base change to $A$, then $C$ is surjective by Proposition~\ref{prop:trivial if} (as we noted before).
	The second part of the theorem follows from either Corollary~\ref{cor: all delta surjective} or Proposition~\ref{prop: special case rels}, depending on $C$.

\subsection{Proof of Theorem \ref{thm:mainmain}}
\label{sec:proofmainmain}
There is an exact sequence of Lie algebras
\begin{equation}
0 \to \mathfrak{r}'' \to \HHH^1_{\rad}(A) \xrightarrow{\varphi \circ \pi} \HHH^1_{\rad}(kQ^s),
\end{equation}
where $\mathfrak{r}''=\ker(\varphi \circ \pi)$ is solvable by Corollary~\ref{cor:iteration} and Corollary~\ref{cor:solv-rad}. Since $A$ is non-wild, (the proof of) Proposition \ref{prop:solv-dynkin} shows that $\HHH^1(kQ^s) \cong \mathfrak{sl}_2^{\oplus n} \oplus \mathfrak{r'}$, where $n$ is the number of connected components of $Q^s$ isomorphic to $K_2$, and $\mathfrak{r'}$ is solvable. By definition, the composition of $\varphi \circ \pi$ with one of the projections to $\mathfrak{sl}_2$ is equal to one of the maps $\Delta_A(a,b)$ for certain arrows $a,b\in A$ (see Proposition~\ref{prop: Delta}). 
Hence we get an exact sequence of Lie algebras
\begin{equation}
0 \to \mathfrak{r} \to \HHH^1_{\rad}(A) \xrightarrow{\bigoplus_{i} \Delta_A(a_i,b_i)} \mathfrak{sl}_2^{\oplus n},
\end{equation}
where $\mathfrak r$ is solvable and $(a_1,b_1), \ldots, (a_n,b_n)$ are all pairs of arrows in $A$ for which a map $\Delta_A(a_i,b_i)$ is defined (i.e. pairs of arrows whose corresponding arrows in $Q^s$ form a $K_2$).

By Theorem \ref{thm:kronecker main} \eqref{statement-2} the image of the rightmost map is isomorphic to a subalgebra of $\mathfrak{sl}_2^{\oplus m} \oplus \mathfrak{r'''}$, where $m$ is the number of maximal surjective Kronecker chains, which by Theorem \ref{thm:kronecker main} \eqref{statement-1} coincides with the number of equivalence classes of maximal Kronecker chains with standard relations, and $\mathfrak r'''$ is solvable. More precisely, we get a homomorphism of Lie algebras
\begin{equation}\label{eq: delta i0}
\bigoplus_{i\in I_0} \Delta_A(a_i,b_i):\ \HHH^1_{\rad}(A) \longrightarrow \mathfrak{sl}_2^{\oplus m},
\end{equation}
with solvable kernel, where $I_0$ contains, for each equivalence class of maximal Kronecker chains with standard relations, exactly one $i$ such that $(a_i,b_i)$ is involved in that Kronecker chain.
Now Proposition~\ref{prop:trivial if} provides, for each $i\in I_0$, a Lie algebra $\mathfrak{g_i}\subset \HHH^1_{\rad}(A)$ such that $\Delta_A(a_i,b_i)$ maps $\mathfrak{g}_i$ isomorphically onto $\mathfrak{sl}_2$ (and $\Delta_A(a_j,b_j)(\mathfrak g_i)=\{0\}$ for any $i\neq j \in I_0$, which follows from the definition of the $\mathfrak g_i$). It follows that $\HHH^1_{\rad}(A)$ is the direct sum of all of the $\mathfrak{g_i}$ (which the map in \eqref{eq: delta i0} maps isomorphically onto $\mathfrak{sl}_2^{\oplus m}$) and a solvable Lie algebra, as claimed.

\subsection{Assorted examples}
	Let us now apply Theorem~\ref{thm:mainmain} to a few typical examples. All of the algebras listed below are special biserial, and therefore tame. They also have trivial centre. The primary purpose of these examples is to illustrate our main results. In addition to that, as a toy application, we use the number of copies of $\mathfrak{sl}_2$ in the first Hochschild cohomology as a derived invariant (notwithstanding the fact that the contents of \S\ref{css} are our main intended application).
	\begin{enumerate}
		\item Let $A=kQ/I$, where 
		$$
		Q:
		\begin{tikzcd}
		1 \ar[shift left]{r}{a} \ar[shift right,swap]{r}{b} & 
		2  \ar[shift left]{r}{c} \ar[shift right,swap]{r}{d}& 3
		 \ar{r}{e} & 4
		\end{tikzcd}
		$$
		and $I=(ac, bd, ad+bc, ce, de)$.
		 Then $((a,b), (c,d))$ is a maximal Kronecker chain with standard relations, and by Proposition~\ref{prop:der-rad} and Theorem~\ref{thm:mainmain}
		 $$
		 	\HHH^1(A)=\HHH^1_{\rad}(A)\cong \mathfrak{sl}_2 \oplus \mathfrak{r}
		 $$
		 for a solvable Lie algebra $\mathfrak{r}$. One can compute that in this example $\mathfrak r = 0$, and therefore $\dim (\HHH^1(A))=3$.
		 \item  Let $B=kQ/I$, where 
		 $$
		 Q:
		 \begin{tikzcd}
		 1 \ar[shift left]{r}{a} \ar[shift right,swap]{r}{b} & 
		 2  \ar[shift left]{r}{c} \ar[shift right,swap]{r}{d}& 3
		  \ar[shift left]{r}{e} \ar[shift right,swap]{r}{f} & 4
		 \end{tikzcd}
		 $$
		 and $I=(ac, bd, ce, df)$. The maximal Kronecker chain $((a,b),(c,d),(e,f))$ does not have standard relations, as (among other things) no linear combination of $ad$ and $bc$ lies in $I$. Therefore $\HHH^1_{\rad}(B)$ is solvable, and  $\HHH^1(B)=\HHH^1_{\rad}(B)$ by Proposition~\ref{prop:der-rad}. One can again compute $\dim(\HHH^1(B))=3$, but as $\HHH^1(B)$ is solvable, this algebra cannot be derived equivalent to the algebra $A$ from the previous point. 
		 \item  Let $C=kQ/I$, where 
		 $$
		Q:
\begin{tikzcd}
1 \ar{r}{a}\ar[out=135, in=-135, loop, swap]{}{c} & 
2  \ar{r}{b} \ar[out=45, in=135, loop, swap]{}{d} & 3 \ar[out=45, in=-45, loop]{}{e}
\end{tikzcd}
$$		
			and $I=(ab, c^2, d^2, e^2, cad, adb, dbe)$. It follows already from 
			Theorem~\ref{thm:main} that $\HHH^1_{\rad}(C)$ is solvable. Since we assume $\cha(k)\neq 2$ one has $\HHH^1(C)=\HHH^1_{\rad}(C)$, and one can compute $\dim(\HHH^1(C))=4$.
		 \item Let $D=kQ/I$, where 
		 	$$
		 Q:
		 \begin{tikzcd}
		 1 \ar[shift left, bend left]{rr}{a} \ar[shift right, bend left, swap]{rr}{b} & &
		 2  \ar[shift left]{ld}{c} \ar[shift right,swap]{ld}{d} \\ & 3
		 \ar[shift left]{lu}{e} \ar[shift right,swap]{lu}{f} 
		 \end{tikzcd}
		 $$
		 and $I=(ac, bd, ad+bc, ce, df, cf+de, ea, fb, eb+fa)$. Then $((a,b), (c,d), (e,f))$ is a maximal Kronecker
		 chain with standard relations, and therefore 
		 $\HHH^1(D)=\HHH^1_{\rad}(D)=\mathfrak{sl}_2\oplus \mathfrak{r}$ for a solvable Lie algebra $\mathfrak{r}$. One can compute that  $\dim (\HHH^1(D))=4$, and therefore $\mathfrak{r}=\mathfrak{gl}_1$. But as $\HHH^1(D)$ is non-solvable, the algebra $D$ cannot be derived equivalent to the algebra $C$ from the previous point (although in this case this also already follows from the Cartan determinant).
		 
		 \item Let the quiver $Q$ be as in the previous point, and let $J$ be the ideal in $kQ$ generated by all paths of length two. Set $E=kQ/J$. Then $((a,b))$, $((c,d))$ and $((e,f))$ are  maximal Kronecker chains of length one with standard relations, and therefore
		 $\HHH^1(E)=\HHH^1_{\rad}(E)=\mathfrak{sl_2}^{\oplus 3}\oplus \mathfrak{r}$ for a solvable Lie algebra $\mathfrak{r}$, and once again one can compute $\mathfrak{r}=\mathfrak{gl}_1$.
	\end{enumerate}

\section{On a question by Chaparro, Schroll and Solotar}
\label{css}
The following question was proposed by Chaparro, Schroll and Solotar~\cite{chaparro2018lie}.

\begin{ques}
\label{question-css}
Is it true that, except in some low dimensional cases, the first Hochschild cohomology space of any finite-dimensional algebra of tame representation type is a solvable Lie algebra?
\end{ques}

Interpreted literally, the following example shows that this question has a negative answer, by exhibiting tame algebras of arbitrarily high dimension which have non-solvable $\HHH^1$.

\begin{ex}
For any $2 \leq n\in \NN$, consider the finite-dimensional algebra $A_n=kQ_n/\rad(kQ_n)^2$, where
\begin{equation}
Q_n:
\begin{tikzcd}
1 \ar[shift left]{r} \ar[shift right]{r} & 2 \ar{r} & \cdots  \ar{r} & n-1 \ar{r}  & n
\end{tikzcd}
\end{equation}
Then $A_n$ is a radical square zero algebra of tame representation type by Proposition~\ref{prop:rad-square}~\eqref{eq:tame}, since the separated quiver $Q^s$ of $A_n$ is the disjoint union of a Kronecker quiver and $n-2$ copies of $A_2$. Moreover,
\begin{equation}
\HHH^1(kQ^s) \cong \mathfrak{sl}_2
\end{equation}
is not solvable, so by Corollary \ref{cor:solv-rad}, neither is $\HHH^1_{\rad}(A_n)=\HHH^1(A_n)$.
\end{ex}

Other examples can be constructed using Theorem \ref{thm:mainmain}. More importantly perhaps, there exist large classes of algebras for which Question \ref{question-css} does have a positive answer. Indeed, one of the main results in \cite{chaparro2018lie} is the following.

\begin{thm}\cite[Thm. 5.4]{chaparro2018lie}
\label{thm:gentle}
Let $A$ be a gentle $k$-algebra, and $\cha(k)=p\neq 2$. Then $\HHH^1(A)$ is solvable if and only if $A$ is not the Kronecker algebra.
\end{thm}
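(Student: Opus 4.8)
The plan is to deduce the statement from Theorem~\ref{thm:mainmain}. Since Hochschild cohomology splits over connected components, I would first reduce to the case that $A=kQ/I$ is connected. It then suffices to establish two things: (i) that $\HHH^1(A)=\HHH^1_{\rad}(A)$ for a gentle algebra, so that Theorem~\ref{thm:mainmain} computes $\HHH^1(A)$ itself; and (ii) that a connected gentle algebra has a maximal Kronecker chain with standard relations if and only if it is the Kronecker algebra $kK_2$. Given (i) and (ii), Theorem~\ref{thm:mainmain} gives $\HHH^1(A)\cong\mathfrak{sl}_2^{\oplus m}\oplus\mathfrak r$ with $\mathfrak r$ solvable and $m=1$ exactly when $A=kK_2$ (where in fact $\HHH^1(kK_2)=\mathfrak{sl}_2$) and $m=0$ otherwise, which is the claim.

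For (i): in a finite-dimensional gentle algebra every loop $a$ satisfies $a^2=0$. Indeed $I$ is generated by paths of length two, so if the length-two path $a^2$ is not a generator then no power $a^n$ lies in $I$ and $A$ is infinite-dimensional; hence $a^2\in I$, and therefore the integer $n_a$ of Proposition~\ref{prop:der-rad} equals $2$. As $\cha(k)=p\neq 2$ we get $p\nmid\prod_a n_a=2^{\#\{\mathrm{loops}\}}$, so Proposition~\ref{prop:der-rad} yields $\HHH^1(A)=\HHH^1_{\rad}(A)$.

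Gentle algebras are tame, hence non-wild, so Theorem~\ref{thm:mainmain} applies and reduces the problem to counting maximal Kronecker chains with standard relations. By Lemma~\ref{rem:shape} such a chain $C=((a_1,b_1),\dots,(a_n,b_n))$ is either a double loop ($n=1$, $s(a_1)=t(a_1)$) or a possibly cyclic chain of doubled arrows. The double-loop case cannot occur: two loops at a single vertex, together with the gentle axioms, force the relation ideal at that vertex to be $(a^2,b^2)$, which gives an infinite-dimensional algebra. If $n=1$ and $C$ is non-circular, condition~\ref{cond:S1} forces every arrow incident to $s(a_1)$ or $t(a_1)$ but outside $C$ to have trivial product with $a_1$ and $b_1$; the gentle axioms at $s(a_1)$ and $t(a_1)$ then preclude the existence of any such arrow, so the component of $s(a_1)$ and $t(a_1)$ is just the doubled edge and, by connectedness, $A=kK_2$ (whose chain indeed has vacuously standard relations, so $m=1$, consistent with $\HHH^1(kK_2)=\mathfrak{sl}_2$).

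The remaining, and main, case is a chain of length $n\ge 2$, where I would show that standard relations are incompatible with monomiality. Fix consecutive pairs $(a_i,b_i),(a_{i+1},b_{i+1})$ (indices mod $n$ in the cyclic case) and let $V_i$ be the four-dimensional space spanned by the length-two paths from $s(a_i)$ to $t(a_{i+1})$; identify $V_i$ with $2\times 2$ matrices so that the base-change group acts through $(X_i,X_{i+1})\in\operatorname{GL}_2(k)\times\operatorname{GL}_2(k)$ by $M\mapsto X_i^{\top}MX_{i+1}$. In the standard presentation $kQ/J$, relations~\ref{cond:S2}--\ref{cond:S3}, together with~\ref{Kc:C} (which forces $e_i(kQ/J)e_{i+2}\neq 0$, hence $J\cap V_i$ exactly three-dimensional), say that $J\cap V_i$ is the space of symmetric matrices, whose annihilator is spanned by the invertible antisymmetric matrix $\varepsilon=\left[\begin{smallmatrix}0&1\\-1&0\end{smallmatrix}\right]$. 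Pulling back along the base-change isomorphism, the relation space of the actual (gentle, hence monomial) ideal $I$ is $X_i^{-\top}\cdot\mathrm{Sym}_2\cdot X_{i+1}^{-1}$, with annihilator $X_i\,\varepsilon\,X_{i+1}^{\top}$, which is still invertible and hence of rank two; but a monomial ideal meets $V_i$ in a coordinate subspace, whose annihilator is a matrix unit of rank one, a contradiction. Hence no chain of length $\ge 2$ has standard relations, so $m=0$ for connected $A\neq kK_2$ and $\HHH^1(A)$ is solvable, while $\HHH^1(kK_2)=\mathfrak{sl}_2$ is not. The main obstacle is precisely this last step: correctly bookkeeping the $\operatorname{GL}_2\times\operatorname{GL}_2$-action on length-two paths and verifying that the ``standard'' hyperplane can never be a monomial one; the other cases are combinatorial applications of the gentle axioms.
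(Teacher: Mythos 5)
Your proposal is correct and follows the same overall route as the paper's own deduction: use Proposition~\ref{prop:der-rad} (via $a^2=0$ for every loop and $p\neq 2$) to identify $\HHH^1(A)$ with $\HHH^1_{\rad}(A)$, apply Theorem~\ref{thm:mainmain}, and then show that a connected gentle algebra other than $kK_2$ carries no maximal Kronecker chain with standard relations. The one step where your argument genuinely diverges is the case of chains of length $n\geq 2$. The paper argues directly on dimensions: in the standard presentation the images of $a_ia_{i+1}$, $a_ib_{i+1}$, $b_ia_{i+1}$, $b_ib_{i+1}$ in $kQ/J$ span an at most one-dimensional space (equivalently, $J$ meets the space $V_i$ of length-two paths in a three-dimensional subspace), a quantity that is invariant under isomorphism by base change; whereas the gentle axioms force exactly two of those four monomials to lie in $I$, so $I$ meets $V_i$ in a two-dimensional subspace — a contradiction. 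You instead set up an annihilator argument: base change forces $I\cap V_i$ to be a $\operatorname{GL}_2(k)\times\operatorname{GL}_2(k)$-translate of the symmetric matrices, whose one-dimensional annihilator under the trace pairing is generated by a rank-two matrix, while a three-dimensional coordinate subspace (the only kind a monomial ideal of that dimension can produce) has a rank-one matrix unit as annihilator. This is a valid argument and is in fact slightly more general, in that it uses only monomiality rather than the full gentle axioms; but for gentle algebras it is more work than needed, since the two-versus-three dimension count already yields the contradiction. You are also somewhat more explicit than the paper about disposing of the $n=1$ cases (double loop and isolated Kronecker), which the paper treats tersely, though both of those are quick applications of the gentle axioms and condition~\ref{cond:S1}, as you observe.
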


Let us sketch how to recover this result from Theorem \ref{thm:mainmain}, which applies since gentle algebras are tame. First, note that for a gentle algebra $A=kQ/I$ (where we can assume $Q$ connected), the ideal $I$ is generated by paths of length two  by definition. Hence for every loop $a$ in $Q$ we have $a^2=0$. Since $p\neq 2$, Proposition \ref{prop:der-rad} applies and $\HHH^1_{\rad}(A)=\HHH^1(A)$. To prove Theorem \ref{thm:gentle}, it hence suffices to show that for $A$ not the Kronecker algebra, there is no maximal Kronecker chain with standard relations. 

Assume there does exist such a chain.
 By condition \ref{cond:S1} (which holds even without applying an isomorphism by base change), we can assume that the algebra is equal to the maximal surjective Kronecker chain, for otherwise there would exist an arrow $c$ in $Q$ such that there are two different arrows $a, b$ with either $ca=cb=0$ (if $t(c)=s(a)=s(b)$) or  $a c=bc=0$ (if $s(c)=t(a)=t(b)$), contradicting the definition of a gentle algebra.
 Since $A$ is not the Kronecker quiver, we can further assume that the length of the chain is $n>1$.
 Let $((a,b),(c,d))$ be a sub-chain. Conditions \ref{cond:S2} and \ref{cond:S3} show that
 the $k$-vector space spanned by $ac$, $ad$, $bc$ and $bd$ is at most one-dimensional (this property is left invariant by isomorphism by base change). However, in a gentle algebra precisely two of these monomials must be relations, that is,  the dimension of this space is exactly two, a contradiction.

Another class of algebras for which Question \ref{question-css} has a positive answer are the toupie algebras. Toupie algebras form a class of special multiserial algebras which combine features of canonical and monomial algebras. More precisely,  the quiver $Q$ of a toupie algebra  has a unique source and a unique sink and any other vertex is the source of exactly one arrow and the target of exactly one arrow.  

\begin{thm} \cite[Corollary 6.9]{artenstein2018gerstenhaber}.
\label{prop:toupie}
Let $A$ be a toupie algebra different from the Kronecker quiver. If $A$ is of representation finite type or tame, then $\HHH^1(A)$ is solvable.
\end{thm}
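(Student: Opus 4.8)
The plan is to deduce this from Theorem~\ref{thm:mainmain}, which applies since a representation-finite or tame algebra is non-wild; that theorem gives $\HHH^1_{\rad}(A)\cong\mathfrak{sl}_2^{\oplus m}\oplus\mathfrak r$ with $\mathfrak r$ solvable, where $m$ is the number of equivalence classes of maximal Kronecker chains with standard relations. Write $A=kQ/I$. One may first reduce to the case that $Q$ is connected: a disjoint union decomposition of $Q$ gives a product decomposition of $A$, and $\Der$, $\Inn$, $\Der_{\rad}$ are all additive over finite products, so $\HHH^1$ and $\HHH^1_{\rad}$ are too; the components of $Q$ that are not toupie quivers are single oriented cycles, whose $\HHH^1_{\rad}$ is solvable by Corollary~\ref{cor:fin-rep-type}, and a finite direct sum of solvable Lie algebras is solvable. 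So assume $Q$ is connected, and let $\sigma$ and $\tau$ denote its unique source and unique sink.

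Next I would record two elementary structural facts. First, $Q$ is acyclic: an oriented cycle cannot pass through $\sigma$ or $\tau$, so it would lie among the middle vertices, each of which has exactly one incoming and one outgoing arrow; such a cycle would then be a connected component of $Q$ on its own, contradicting connectedness. In particular $Q$ has no loops, so Proposition~\ref{prop:der-rad} gives $\HHH^1(A)=\HHH^1_{\rad}(A)$, and it suffices to prove the latter is solvable. Second, in a toupie quiver only $\sigma$ can be the source of more than one arrow and only $\tau$ can be the target of more than one arrow. By Theorem~\ref{thm:mainmain} it is therefore enough to show $m=0$; and since a Kronecker chain contains, by definition, a pair of distinct parallel arrows $a_1\neq b_1$, it even suffices to show that (unless $A$ is the Kronecker algebra $kK_2$) $Q$ has no pair of parallel arrows at all.

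So suppose $a\neq b$ are arrows with $s(a)=s(b)$ and $t(a)=t(b)$. By the structural fact above, $s(a)=\sigma$ and $t(a)=\tau$. If $Q$ had only the two vertices $\sigma,\tau$, then all arrows would run $\sigma\to\tau$, and since $A\not\cong kK_2$ there would be at least three of them; but then $A\cong kK_m$ with $m\geq 3$ (the ideal $I$ lies in $\rad^2(A)=\{0\}$), which is wild, a contradiction. Hence $Q$ has a third vertex, and walking backwards along the unique incoming arrows from it we reach $\sigma$ along an oriented path (using acyclicity); in particular there is an arrow $e$ with $s(e)=\sigma$ and $t(e)\neq\tau$.

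Finally I would pass to the separated quiver $Q^s$. The arrows $a$ and $b$ give two arrows of $Q^s$, both from $\sigma$ to the second copy of $\tau$, i.e.\ a double edge in $\overline{Q^s}$; and $e$ gives an arrow of $Q^s$ from $\sigma$ to the second copy of $t(e)$, a vertex distinct from both $\sigma$ and the second copy of $\tau$. Thus the connected component of $\sigma$ in $\overline{Q^s}$ has at least three vertices and contains a multiple edge. On the other hand, $kQ^s$ is stably equivalent to $A/\rad^2(A)$, which is a quotient of the non-wild algebra $A$ and hence itself non-wild; being hereditary, $kQ^s$ is then of non-wild representation type, so $\overline{Q^s}$ is a disjoint union of Dynkin and Euclidean graphs (this is exactly what is used in the proof of Proposition~\ref{prop:rad-square}). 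But the only such graph with a multiple edge is $\widetilde A_1$, which has precisely two vertices — a contradiction. Hence $Q$ has no parallel arrows, so $A$ has no Kronecker chains, so $m=0$ and $\HHH^1_{\rad}(A)$, and therefore $\HHH^1(A)$, is solvable. The only point requiring real care is this last step: tracking the separated quiver carefully enough to see that the double edge and the extra vertex lie in the same component, together with the (standard, and repeatedly used in this paper) fact that quotients of non-wild algebras remain non-wild; everything else is a direct unwinding of the toupie axioms and Theorem~\ref{thm:mainmain}.
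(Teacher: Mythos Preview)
Your proof is correct and follows essentially the same route as the paper: both use the absence of loops to get $\HHH^1(A)=\HHH^1_{\rad}(A)$ via Proposition~\ref{prop:der-rad}, and both argue that for a non-wild toupie quiver with more than two vertices the separated quiver cannot have an $\widetilde{A}_1$ component. The only noteworthy difference is that the paper invokes the simpler Theorem~\ref{thm:main} (the criterion ``no $\widetilde{A}_1$ in $\overline{Q^s}$'') directly, whereas you go through the stronger Theorem~\ref{thm:mainmain} and show there are no Kronecker chains at all; your argument in fact establishes the hypothesis of Theorem~\ref{thm:main} along the way, so the detour through Theorem~\ref{thm:mainmain} is unnecessary but harmless. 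Your careful justification that parallel arrows must run from the source to the sink, together with the separated-quiver contradiction, spells out what the paper compresses into the phrase ``it follows from the description of $Q$''.
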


Since a toupie algebra $A$ has no loops, by Proposition \ref{prop:der-rad}, $\HHH^1_{\rad}(A)=\HHH^1(A)$. The non-wild toupie algebras on two vertices are $kA_2$ and $kK_2$, and for toupie algebras with more than two vertices, it follows from the description of $Q$ that $\overline{Q^s}$ does not contain an $\tilde{A_1}$, hence Theorem \ref{thm:main} applies and one recovers Theorem \ref{prop:toupie}.

\subsection{Tame symmetric algebras}

As a new application of Theorem \ref{thm:mainmain}, we now show how tame symmetric algebras also provide a positive answer to Question \ref{question-css}. We can assume $\cha(k)=p \neq 2$, since in this case we already know that $\HHH^1_{\rad}(A)$ is solvable by Theorem \ref{thm:main}\eqref{eq:main-1}.

\begin{thm}
\label{thm:tame-symmetric}
Let $A$ be a symmetric algebra of non-wild representation type, and $p \neq 2$. Then $\HHH^1_{\rad}(A)$ is solvable if and only if $A\not\cong T(K_2)$, the trivial extension of the Kronecker quiver.
\end{thm}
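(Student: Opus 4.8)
The plan is to derive the statement from Theorem~\ref{thm:mainmain} together with the classification of maximal Kronecker chains in non-wild algebras (Lemma~\ref{rem:shape}). First I would reduce to $A$ basic and connected: $\HHH^1_{\rad}$ is a Morita invariant (Proposition~\ref{prop:morita}) and the property of being symmetric is Morita invariant, so we may take $A$ basic; and $\HHH^1_{\rad}(A)=\bigoplus_i \HHH^1_{\rad}(A_i)$ over the indecomposable ring direct factors $A_i$ of $A$, each of which is again symmetric and non-wild, so since $T(K_2)$ is connected we may take $A$ connected. By Theorem~\ref{thm:mainmain}, $\HHH^1_{\rad}(A)$ is non-solvable precisely when $A$ admits a maximal Kronecker chain with standard relations; thus it suffices to prove that a basic connected symmetric non-wild $k$-algebra $A$ with $\cha(k)\neq 2$ admits such a chain if and only if $A\cong T(K_2)$.

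For the ``if'' direction I would record, by a direct computation with the trivial extension $kK_2\ltimes D(kK_2)$, that $T(K_2)\cong kQ/J$ with $Q$ having vertices $1,2$ and arrows $a_1,b_1\colon 1\to 2$, $a_2,b_2\colon 2\to 1$, and
\[
J=(a_1a_2,\ b_1b_2,\ a_1b_2+b_1a_2,\ a_2a_1,\ b_2b_1,\ a_2b_1+b_2a_1);
\]
here $\dim_k T(K_2)=8$ and $\rad^3(T(K_2))=0$. In this presentation $C=((a_1,b_1),(a_2,b_2))$ is a maximal Kronecker chain (it is a chain since $a_1b_2\neq 0$, giving \eqref{Kc:C}, and maximal since \eqref{Kc:D} forbids repeated arrows), and $J$ is exactly the ideal of standard relations \ref{cond:S1}--\ref{cond:S3} for $C$ (with \ref{cond:S1} vacuous as $C$ exhausts $Q$). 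Since $T(K_2)$ is special biserial, hence non-wild, Theorem~\ref{thm:mainmain} gives $\HHH^1_{\rad}(T(K_2))\cong \mathfrak{sl}_2\oplus\mathfrak{r}$ with $\mathfrak r$ solvable, which is not solvable; as $Q$ has no loops, $\HHH^1(T(K_2))=\HHH^1_{\rad}(T(K_2))$ by Proposition~\ref{prop:der-rad}.

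For the ``only if'' direction, let $C=((a_1,b_1),\dots,(a_n,b_n))$ be a maximal Kronecker chain with standard relations; replacing $A$ by an isomorphic-by-base-change algebra, assume \ref{cond:S1}--\ref{cond:S3} hold in $A$ (so $C$ is surjective by Proposition~\ref{prop:trivial if}), and run through Lemma~\ref{rem:shape}. In case \eqref{eq:shape1}, $A$ is local with quiver $L_2$, so by Proposition~\ref{prop:local} it is $k\langle a,b\rangle/(a^2,b^2,ab,ba)$ or $k\langle a,b\rangle/(a^2,b^2,ab+ba)$; the first has $2$-dimensional socle, and the second has socle $\langle ab\rangle$ on which every symmetrizing form vanishes (as it kills $[A,A]\ni ab-ba=2ab$ and $\cha(k)\neq 2$), so neither is symmetric --- this case is impossible. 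In case \eqref{eq:shape3}, by Lemma~\ref{rem:shape} the $a_i,b_i$ are the only arrows of $Q$ with source in $\{s(a_1),\dots,s(a_n)\}$; with \ref{cond:S1}--\ref{cond:S2} one checks that every positive-length path $s(a_1)\to s(a_1)$ vanishes, so $\dim_k e_1Ae_1=1$ for $e_1=e_{s(a_1)}$. Then $P_1=e_1A$ has $[P_1:S_1]=1$ but is not simple (it contains the linearly independent $a_1,b_1$), contradicting $\soc(P_1)\cong S_1$ (valid since symmetric algebras are weakly symmetric): in a composition series refining $0\subset\rad(P_1)\subset P_1$, the simple $S_1$ would occur both as the top factor and as a submodule. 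So only case \eqref{eq:shape2} remains: then $C$ is a connected component, hence $Q$ is the $n$-cycle with all arrows doubled and $A$ is a quotient of $B=kQ/J_{\mathrm{std}}$, the algebra cut out by the standard relations; one computes $\rad^3(B)=0$ and $\soc(e_iB)\cong S_{i+2}$ (indices mod $n$). If $n\geq 3$, then $e_iAe_i=ke_i$ for all $i$, so $\soc(P_i)$ --- which lies in $\rad(P_i)$ since $P_i$ is non-simple --- cannot be $\cong S_i$, contradicting weak symmetry; hence $n=2$. For $n=2$ one has $B\cong T(K_2)$ by the presentation above, and since $T(K_2)$ is self-injective, every nonzero ideal meets its socle, so a proper quotient of $T(K_2)$ has a projective with non-simple socle and fails to be weakly symmetric; therefore $A=B\cong T(K_2)$.

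I expect the main obstacle to be the bookkeeping in case \eqref{eq:shape2}: one must get the presentation of $T(K_2)$ right, identify it with $J_{\mathrm{std}}$ when $n=2$, and rule out proper quotients --- this, together with the characteristic-$2$-sensitive verification that the two local algebras of case \eqref{eq:shape1} are not symmetric, is where the real content lies.
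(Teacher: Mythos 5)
Your proof is correct and follows essentially the same case-split as the paper (via Lemma~\ref{rem:shape} and Theorem~\ref{thm:mainmain}), but you fill in several details the paper leaves implicit: the explicit presentation of $T(K_2)$, the symmetrizing-form argument ruling out $\Lambda(x,y)$, the ``no nonzero cycle at $s(a_1)$'' bookkeeping, and the self-injectivity argument ruling out proper quotients of $T(K_2)$.

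One small imprecision in the final step: the assertion ``a proper quotient of $T(K_2)$ has a projective with non-simple socle'' is not literally true of \emph{all} proper quotients (for instance $T(K_2)/\rad(T(K_2))\cong k\times k$ has simple projectives and is symmetric). What you actually use, and should state, is that the quotient ideal $N=I/J_{\mathrm{std}}$ is \emph{admissible}: because the chain $C$ must survive as a Kronecker chain in $A$, the images of $a_1,b_1,a_2,b_2$ must remain a basis of $\rad(A)/\rad^2(A)$, forcing $N\subseteq\rad^2(B)=\soc(B)$. For those $N$ the socle argument you give does go through and rules out all nonzero choices.
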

\begin{proof}
If $A \cong T(K_2)$, which is symmetric special biserial, hence tame, then $\HHH^1_{\rad}(A)$ is not solvable, since it has a maximal Kronecker chain of length two with standard relations.

For the other direction, suppose that $C=((a_1,b_1),\ldots,(a_n,b_n))$ is a maximal Kronecker chain with standard relations in $A$, and $A\not\cong T(K_2)$ . 
We first claim that case \eqref{eq:shape3} in Lemma \ref{rem:shape} cannot occur. Suppose it would, then if we denote by $P$ the indecomposable projective module corresponding to the source vertex of $a_1$, condition \ref{cond:S1} ensures that
$\text{top}(P) \ncong \text{soc}(P)$, contradicting the assumption that $A$ is symmetric. 
Next, we claim that if we are in case \eqref{eq:shape2} of Lemma \ref{rem:shape}, then $n=2$. Indeed, assume $n>2$, then by \ref{cond:S2} and \ref{cond:S3}, there is no non-zero cycle in $A$ starting at $s(a_1)$, so again, we find $\text{top}(P) \not\cong \text{soc}(P)$.
If we are in case \eqref{eq:shape1} of Lemma \ref{rem:shape}, then $A$ is a quotient of the exterior algebra on two generators $\Lambda(x,y)=k\langle x,y \rangle/(x^2, y^2, xy+yx)$, which is not symmetric since $p \neq 2$. The only quotients of $\Lambda(x,y)$ which are symmetric are $k$ and $k[x]/(x^2)$, which in particular have a solvable $\HHH^1_{\rad}$.
If we are in case \eqref{eq:shape2} for $n=2$ of Lemma \ref{rem:shape}, then $A$ is a quotient of $B=kQ/I$, where 
\begin{equation}
Q:
\begin{tikzcd}
1 \ar[bend left,shift left=3]{r}{a_1} \ar[bend left,shift left,swap]{r}{b_1} & 
2\ar[bend left,shift left=3]{l}{a_2} \ar[bend left,shift left,swap]{l}{b_2} 
\end{tikzcd}
\end{equation}
and $I=(a_1a_2,b_1b_2,a_2a_1,b_2b_1,a_1b_2+b_1a_2,a_2b_1+b_2a_1)$. In fact $B\cong T(K_2)$.  Since $T(K_2)$ has no further symmetric quotients with maximal surjective Kronecker chains with standard relations, we are done.
\end{proof}

\subsection{Auslander algebras}

Finally, we mention some examples of a different flavour. Remember that an Auslander algebra $\Gamma$ is a finite-dimensional algebra of $\gldim \Gamma \leq 2$ and $\text{dom. dim } \Gamma \geq 2$. Every Auslander algebra $\Gamma$ arises as the endomorphism algebra $\Gamma=\Gamma_A=\End_A(M)$ of an minimal representation generator $M$ for a representation finite algebra $A$. By \cite[Section 4]{MR2032511} there is an injective morphism
\begin{equation}
\HHH^1(\Gamma_A) \xrightarrow{c} \HHH^1(A),
\end{equation}
and we claim this is compatible with the Lie algebra structure.  Hence, assuming $\cha(k)=0$, it follows from Corollary \ref{cor:fin-rep-type} that $\HHH^1(\Gamma)$ is solvable.

To see the compatibility, note that there is a fully faithful functor
\begin{equation}
-\otimes_A P:\Perf(A) \to \Perf(\Gamma_A),
\end{equation}
for $P=\Hom_A(M,A)$, which by \cite[\S 3.3]{keller2003derived} induces a morphism of Gerstenhaber algebras
\begin{equation}
\label{eq:gerst}
\HHH^*(\Gamma_A) \to \HHH^*(A).
\end{equation}
One checks that the morphism $c$, which is constructed in \cite[Theorem 3.5]{MR2032511} is exactly the degree one component in \eqref{eq:gerst}, so $c$ is a morphism of Lie algebras.

\subsection{A class of special biserial algebras}

In \cite{meinel2018gerstenhaber}, the Gerstenhaber algebra structure on the Hochschild cohomology ring of a specific class of special biserial algebras was explicitly determined. These algebras are defined as follows: consider the quiver $Q$ with $m$ vertices $0, \ldots, m-1$ and $2m$ arrows $a_i:i \to i+1$ and $\overline{a}_i:i+1 \to i$, for $i=0, \ldots, m-1$. Considering the indices modulo $m$, let 
\begin{equation}
A_n=kQ/(a_ia_{i+1},\overline{a}_{i+1} \overline{a}_i,(a_i\overline{a}_i)^n-(\overline{a}_{i-1}a_{i-1})^n).
\end{equation}
Since this algebra is special biserial, it is tame, and Theorem \ref{thm:main} immediately implies that $\HHH^1(A_n)$ is solvable,  which was verified by explicit computation in \cite[Section 9.1]{meinel2018gerstenhaber}. Of course the results of \cite{meinel2018gerstenhaber} are much stronger, and solvability of $\HHH^1$ is only mentioned as a side-note.

\begin{ack}
We would like to heartily thank Geoffrey Janssens for alerting us to Question \ref{question-css}, which was the initial motivation for us to write this paper.
\end{ack}

\bibliography{hochschild-solvable}

\end{document}